\numberwithin{equation}{section}
\newtheorem{thm}{Theorem}[section]
\newtheorem{defn}[thm]{Definition}
\newtheorem{lem}[thm]{Lemma}
\newtheorem{cor}[thm]{Corollary}
\newtheorem{rem}[thm]{Remark}
\newtheorem{ex}[thm]{Example}
\DeclareMathOperator{\id}{id}
\begin{document}
\title[Set-theoretical solutions to Hom-Yang-Baxter equation]{Set-theoretical solutions to the Hom-Yang-Baxter equation and Hom-cycle sets}
\author{Kaiqiang Zhang}
\author{Xiankun Du}
\address{K. Zhang: School of Mathematics,  Jilin University, Changchun 130012, China}
\email{zkaiqiang@163.com}
\address{X. Du: School of Mathematics,  Jilin University, Changchun 130012,  China}
\email{duxk@jlu.edu.cn}

\thanks{Corresponding author: X Du}
\subjclass{16T25, 20N02, 20N05}
\keywords{cycle set, Hom-cycle set, Hom-Yang-Baxter equation, left quasigroup,  set-theoretic solution}
\begin{abstract}
Set-theoretic solutions to the Yang-Baxter equation have been studied extensively  by means of related algebraic systems such as cycle sets and braces, dynamical versions of which have also been developed. No work focuses on set-theoretic solutions to the Hom-Yang-Baxter equation (HYBE for short). This paper  investigates set-theoretic solutions to HYBE and associated algebraic system, called  Hom-cycle sets. We characterize  left non-degenerate involutive set-theoretic solutions to HYBE and Hom-cycle sets, and establish their relations. We discuss connections among Hom-cycle sets, cycle sets, left non-degenerate involutive set-theoretic solutions to HYBE and  the Yang-Baxter equation.
\end{abstract}

\maketitle

\section{Introduction}

Let $V$ be a vector space.  A solution to the Yang-Baxter equation (YBE shortly)  is a linear map $R:V\otimes V\rightarrow V\otimes V$
such that
\[
(R\otimes\id_{V})(\id_{V}\otimes R)(R\otimes\id_{V})=(\id_{V}\otimes R)(R\otimes\id_{V})(\id_{V}\otimes R).
\]
YBE first appeared in the work of Yang~\cite{Yang} and Baxter~\cite{Baxter}. This equation is fundamental to quantum groups. It is a central task to determine all solutions to YBE. However,
this is  difficult to accomplish. In order to find new solutions to YBE, Drinfeld~\cite{Dr} in 1992 suggested considering  set-theoretic solutions to YBE, that is, a map $r:X\times X\rightarrow X\times X$,
where $X$ is a nonempty set, satisfying
\begin{equation}
(r\times\id_{X})(\id_{X}\times r)(r\times\id_{X})=(\id_{X}\times r)(r\times\id_{X})(\id_{X}\times r).\label{eqsol}
\end{equation}
  Etingof, Schedler and Soloviev~\cite{Etingof},  Gateva-Ivanova and Van den Bergh\\~\cite{Gateva1}, and Lu, Yan and Zhu~\cite{lu2000} initially conducted
a systematic study on this subject. They studied set-theoretic
solutions with invertibility, nondegeneracy and involutivity by using
group theory. Gateva-Ivanova in~\cite{Gateva4} introduced a combinatorial approach to discuss set-theoretic solutions and she conjectured that every
square-free, non-degenerate involutive set-theoretic solution $(X,r)$
is decomposable whenever $X$ is finite. This has been proved by Rump
in~\cite{Rump1}.

Left cycle sets were introduced by Rump~\cite{Rump1} to study left
non-degenerate involutive set-theoretic solutions to YBE.
Rump showed that there is a bijective correspondence
between   left non-degenerate involutive set-theoretic solutions to YBE and left cycle sets, and non-degenerate solutions correspond to non-degenerate left cycle sets. He also prove
that all finite left cycle sets are non-degenerate. The theory of cycle sets has been proved to be very useful in understanding the structure
of solutions to YBE (see for example~\cite{Bonatto21,castelli2018,castelli2019,castelli2020A,castelli2021,castelli2020O,Dehornoy1,lebed2017}). This theory has been greatly developed,  and inspires theory of braces~\cite{Cedo1,gateva2018,Guarnieri,Rump2,smoktunowicz2018}.

Another version of YBE, dynamical quantum Yang-Baxter equation, has been  studied~\cite{Etingofdynamical,etingof2001}, which is closely related to dynamical quantum groups~\cite{etingof1998}.  Their set-theoretic solution, called DYB maps, were proposed by Shibukawa in~\cite{Shibukawa1} and received a lot of attention (see for example~\cite{kamiya2011,matsumoto2018,shibukawa2016, veselov2007}).  Dynamical braces and dynamical cycle sets were introduced and related to  right non-degenerate unitary DYB maps~\cite{Matsumoto13,Rump9}.

The Hom-Yang-Baxter equation (HYBE shortly) was introduced by Yau~\cite{Yau} motivated by Hom-Lie algebras, which is related to  Hom-quantum groups~\cite{Yau3}. Many researchers have devoted considerable attention to HYBE  (see for example~\cite{chen2015,Jiao2018,Panaite,wang2022,Yau1,Yau2}). However, to our knowledge, no work concentrates on set-theoretical solutions to  GYBE.

The aim of this paper is to investigate left non-degenerate involutive
set-theoretic solutions to HYBE, corresponding algebraic systems,
 called Hom-cycle sets, and their relationship.

The paper is organized as follows. In Section 2, we
review some basic definitions and results, and  provide a general categorical framework for the following discussion. In Section 3,
we   characterize  left non-degenerate involutive set-theoretic
solutions to HYBE. In Section 4,
we introduce the notion of a Hom-cycle set, and prove that there exists a one to one correspondence between left Hom-cycle sets and left non-degenerate
involutive set-theoretic solutions to HYBE.
 Section 5 is devoted to relationship among Hom-cycle sets, cycle sets,  left non-degenerate involutive solutions to HYBE and YBE.

\section{Preliminaries}
 Let $X$ be a nonempty set and let $r:X\times X\rightarrow X\times X$
be a map. We will write $r(x,y)=(\lambda_{x}(y),\rho_{y}(x))$, where
$\lambda_{x}$ and $\rho_{y}$ are maps from $X$ to itself for all
$x,y\in X$.  The pair $(X,r)$ is referred to a quadratic set in~\cite{Gateva4}.

A quadratic set $(X,r)$ (or a  map $r$) is called
\begin{enumerate}
\item left (respectively, right) non-degenerate if the map $\lambda_{x}$ (respectively, $\rho_{x}$)
is bijective for all $x\in X$;
\item non-degenerate if $r$ is both left and right non-degenerate;
\item involutive if $r^{2}=\id$, the identify map;
\item a set-theoretic solution to YBE if $r$ satisfies~\eqref{eqsol}.
\end{enumerate}

The following lemma comes from~\cite[Proposition 1.6]{Etingof} (see
also~\cite[Lemma 2.4]{Gateva2008matched}).
\begin{lem}\label{sec2:mark1}
 \begin{enumerate}
\item A quadratic set $(X,r)$   is involutive if and only if
\begin{gather}
\lambda_{\lambda_{x}(y)}\rho_{y}(x)=x,\label{sec2:eq1}\\
\rho_{\rho_{x}(y)}\lambda_{y}(x)=x,\label{sec2:eq2}
\end{gather}
for all $x,y\in X$.
\item\label{lndequiv}  A quadratic set $(X,r)$  is left non-degenerate and involutive if and only if $\lambda_{x}$ is bijective for all $x\in X$ and
\begin{equation}
\rho_{y}(x)=\lambda_{\lambda_{x}(y)}^{-1}(x),\label{eqInvolutive}
\end{equation}
for all $x,y\in X$.
\end{enumerate}
\end{lem}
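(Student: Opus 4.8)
The plan is to reduce everything to a single explicit computation of $r^2$ and then, for the second part, to exploit the invertibility of the maps $\lambda_x$.

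For part (1), I would begin by computing $r^2(x,y)$ directly. Writing $r(x,y)=(\lambda_x(y),\rho_y(x))$ and applying $r$ once more to the pair $(\lambda_x(y),\rho_y(x))$ gives
\[
r^2(x,y)=\bigl(\lambda_{\lambda_x(y)}(\rho_y(x)),\ \rho_{\rho_y(x)}(\lambda_x(y))\bigr).
\]
Hence $r^2=\id$ is equivalent to the two identities $\lambda_{\lambda_x(y)}(\rho_y(x))=x$ and $\rho_{\rho_y(x)}(\lambda_x(y))=y$ holding for all $x,y\in X$. The first is precisely \eqref{sec2:eq1}. The second becomes \eqref{sec2:eq2} after interchanging the names of $x$ and $y$, which is legitimate because the identity is quantified over all ordered pairs. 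This settles part (1).

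For the forward direction of part (2), I would assume $(X,r)$ is left non-degenerate and involutive. Left non-degeneracy says each $\lambda_x$, and hence each $\lambda_{\lambda_x(y)}$, is bijective. Invoking part (1), equation \eqref{sec2:eq1} reads $\lambda_{\lambda_x(y)}(\rho_y(x))=x$, so composing on the left with $\lambda_{\lambda_x(y)}^{-1}$ yields $\rho_y(x)=\lambda_{\lambda_x(y)}^{-1}(x)$, which is exactly \eqref{eqInvolutive}.

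For the converse I would assume each $\lambda_x$ is bijective and that \eqref{eqInvolutive} holds, and must deduce $r^2=\id$ by verifying \eqref{sec2:eq1} and \eqref{sec2:eq2} through part (1). Equation \eqref{sec2:eq1} is immediate: substituting \eqref{eqInvolutive} gives $\lambda_{\lambda_x(y)}(\rho_y(x))=\lambda_{\lambda_x(y)}(\lambda_{\lambda_x(y)}^{-1}(x))=x$. The one step I expect to be the real obstacle is \eqref{sec2:eq2}, whose indices are more entangled. Here I would expand $\rho_{\rho_x(y)}(\lambda_y(x))$ via \eqref{eqInvolutive} into $\lambda_{\lambda_{\lambda_y(x)}(\rho_x(y))}^{-1}(\lambda_y(x))$, and then collapse the inner subscript $\lambda_{\lambda_y(x)}(\rho_x(y))$ to $y$ by reusing the already-verified \eqref{sec2:eq1} with $x$ and $y$ swapped. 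The whole expression then reduces to $\lambda_y^{-1}(\lambda_y(x))=x$, as required. The essential point to watch is the order of operations: one must establish \eqref{sec2:eq1} first and feed it, in swapped form, into the computation of \eqref{sec2:eq2}, rather than attempting to verify the two identities in isolation.
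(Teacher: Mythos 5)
Your proof is correct, and it is the standard direct verification: the paper itself does not prove this lemma but cites it from \cite[Proposition 1.6]{Etingof} and \cite[Lemma 2.4]{Gateva2008matched}, where the argument is exactly this computation of $r^2(x,y)=\bigl(\lambda_{\lambda_x(y)}\rho_y(x),\,\rho_{\rho_y(x)}\lambda_x(y)\bigr)$ followed by inverting $\lambda_{\lambda_x(y)}$. You also correctly identified the one non-trivial point in the converse of (2) --- that \eqref{sec2:eq2} must be recovered by feeding the already-established \eqref{sec2:eq1} (with $x$ and $y$ interchanged) into the expansion of $\rho_{\rho_x(y)}\lambda_y(x)$ via \eqref{eqInvolutive} --- so nothing is missing.
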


\begin{thm}~\cite[Proposition 1.6]{Etingof}\label{YBEs}
A quadratic set $(X,r)$
is a set-theoretic solution to YBE if and only if
\begin{enumerate}
\item $\lambda_{\lambda_{x}(y)}\lambda_{\rho_{y}(x)}(z)=\lambda_{x}\lambda_{y}(z)$,
\item $\rho_{\lambda_{\rho_{y}(x)}(z)}\lambda_{x}(y)=\lambda_{\rho_{\lambda_{y}(z)}(x)}\rho_{z}(y)$,
\item $\rho_{z}\rho_{y}(x)=\rho_{\rho_{z}(y)}\rho_{\lambda_{y}(z)}(x)$,
\end{enumerate}
for all $x,y,z\in X$.
\end{thm}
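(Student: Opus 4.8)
The plan is to prove this by direct computation: I apply both sides of the Yang--Baxter equation~\eqref{eqsol} to an arbitrary triple $(x,y,z)\in X\times X\times X$, expand each threefold composition step by step using the rule $r(a,b)=(\lambda_{a}(b),\rho_{b}(a))$, and then compare the two resulting triples coordinate by coordinate. Since~\eqref{eqsol} is an equality of maps on $X\times X\times X$, it holds if and only if the two sides agree in each of the three components for all $x,y,z$, and I expect these three componentwise identities to be precisely conditions (1), (2), and (3).

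First I would compute the left-hand side $(r\times\id_{X})(\id_{X}\times r)(r\times\id_{X})(x,y,z)$. Applying $r\times\id_{X}$ gives $(\lambda_{x}(y),\rho_{y}(x),z)$; applying $\id_{X}\times r$ then acts on the last two coordinates to give $(\lambda_{x}(y),\lambda_{\rho_{y}(x)}(z),\rho_{z}\rho_{y}(x))$; applying $r\times\id_{X}$ once more acts on the first two coordinates and yields
\[
\bigl(\lambda_{\lambda_{x}(y)}\lambda_{\rho_{y}(x)}(z),\ \rho_{\lambda_{\rho_{y}(x)}(z)}\lambda_{x}(y),\ \rho_{z}\rho_{y}(x)\bigr).
\]

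Next I would compute the right-hand side $(\id_{X}\times r)(r\times\id_{X})(\id_{X}\times r)(x,y,z)$ in the same fashion. Applying $\id_{X}\times r$ gives $(x,\lambda_{y}(z),\rho_{z}(y))$; applying $r\times\id_{X}$ gives $(\lambda_{x}\lambda_{y}(z),\rho_{\lambda_{y}(z)}(x),\rho_{z}(y))$; applying $\id_{X}\times r$ produces
\[
\bigl(\lambda_{x}\lambda_{y}(z),\ \lambda_{\rho_{\lambda_{y}(z)}(x)}\rho_{z}(y),\ \rho_{\rho_{z}(y)}\rho_{\lambda_{y}(z)}(x)\bigr).
\]
Equating the first coordinates gives (1), the second coordinates give (2), and the third coordinates give (3); conversely, if (1)--(3) hold for all $x,y,z$ then the two triples coincide, so $r$ solves the YBE.

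The computation itself is routine; the only real obstacle is bookkeeping. I must track with care which pair of coordinates each factor $r\times\id_{X}$ or $\id_{X}\times r$ acts on at each stage, and keep straight that after an application of $r$ the new argument feeds the $\lambda$-map while the old first argument feeds the $\rho$-map, so that the subscripts are assigned correctly. A single misplaced subscript would manufacture a spurious condition, so the discipline is to record the full triple after each of the three steps before reading off the three identities.
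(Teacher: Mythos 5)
Your computation is correct: both threefold compositions are expanded accurately, the subscripts are all in the right places, and equating the three coordinates yields exactly conditions (1)--(3), with the converse following since equality of maps into $X\times X\times X$ is equivalent to componentwise equality. The paper gives no proof of this statement (it cites Proposition 1.6 of Etingof--Schedler--Soloviev), and your direct expansion is precisely the standard argument behind that citation, so there is nothing further to add.
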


The following Theorem comes from~\cite[Proposition 2]{Cedo1} (see also~\cite[Theorem 9.3.10]{Jesper}).
\begin{thm}\label{LNDISYBE}
A quadratic set  $(X,r)$ is
a left non-degenerate involutive set-theoretic solution to YBE
if and only if the following  hold.
\begin{enumerate}
\item $\lambda_{x}$ is bijective for all $x\in X$;
\item $r^{2}=\id$;
\item $\lambda_{x}\lambda_{\lambda_{x}^{-1}(y)} =\lambda_{y}\lambda_{\lambda_{y}^{-1}(x)}$
for all $x,y\in X$.
\end{enumerate}
\end{thm}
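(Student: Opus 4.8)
The plan is to eliminate $\rho$ in favour of $\lambda$ and then reduce the three braid conditions of Theorem~\ref{YBEs} to the single identity in condition (3). First, by Lemma~\ref{sec2:mark1}(\ref{lndequiv}), the conjunction of conditions (1) and (2) is exactly the assertion that $(X,r)$ is left non-degenerate and involutive, and in that case $\rho$ is completely determined by $\lambda$ through \eqref{eqInvolutive}, namely $\rho_{y}(x)=\lambda_{\lambda_{x}(y)}^{-1}(x)$. I would therefore assume (1) and (2) throughout and substitute \eqref{eqInvolutive} wherever a $\rho$ appears, so that each condition of Theorem~\ref{YBEs} becomes a statement about the bijections $\lambda_{x}$ alone.

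The key computation is the reduction of the first condition of Theorem~\ref{YBEs}. Substituting \eqref{eqInvolutive} turns $\lambda_{\lambda_{x}(y)}\lambda_{\rho_{y}(x)}=\lambda_{x}\lambda_{y}$ into $\lambda_{\lambda_{x}(y)}\lambda_{\lambda_{\lambda_{x}(y)}^{-1}(x)}=\lambda_{x}\lambda_{y}$. Setting $a=\lambda_{x}(y)$ and $b=x$ (so that $\lambda_{b}^{-1}(a)=y$ and $\lambda_{a}^{-1}(b)=\lambda_{\lambda_{x}(y)}^{-1}(x)$), this is precisely condition (3) for the pair $(a,b)$. Since each $\lambda_{x}$ is bijective, the assignment $(x,y)\mapsto(\lambda_{x}(y),x)$ is a bijection of $X\times X$, so the first condition of Theorem~\ref{YBEs} holds for all $x,y,z$ if and only if condition (3) holds for all $a,b$. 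This already yields one implication of the theorem at once: a left non-degenerate involutive solution satisfies (1), (2) and in particular the first condition of Theorem~\ref{YBEs}, hence (3).

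For the converse I would assume (1), (2) and (3); the equivalence above recovers the first condition of Theorem~\ref{YBEs}, and it remains to derive the second and third. Here I would again substitute \eqref{eqInvolutive} and use the two involutivity identities \eqref{sec2:eq1} and \eqref{sec2:eq2} to cancel the inner terms. The third condition is phrased purely in the $\rho$'s and is the mirror of the first under the flip $r\mapsto\tau r\tau$ with $\tau(x,y)=(y,x)$, which interchanges the roles of $\lambda$ and $\rho$ and preserves involutivity; this explains heuristically why it should follow from the first. However, the mirrored reduction would require expressing $\lambda$ through $\rho$, which needs right non-degeneracy, and we are only assuming left non-degeneracy. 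I would therefore verify the third condition directly from \eqref{eqInvolutive}, \eqref{sec2:eq1}, \eqref{sec2:eq2} and the first condition, rather than by appealing to the symmetry.

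The main obstacle is the second, mixed condition of Theorem~\ref{YBEs}: it nests both $\lambda$ and $\rho$, so after replacing every $\rho$ by the right-hand side of \eqref{eqInvolutive} one is left with a deeply iterated expression in the $\lambda_{x}$, and collapsing it to condition (3) demands careful bookkeeping, applying the involutivity relations \eqref{sec2:eq1} and \eqref{sec2:eq2} at each level to eliminate the composed inverse maps. I expect this to be the most delicate step of the argument. Once the second and third conditions are established, Theorem~\ref{YBEs} shows that $(X,r)$ is a set-theoretic solution to YBE, which together with (1) and (2) completes the proof.
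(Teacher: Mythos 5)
The paper does not prove Theorem~\ref{LNDISYBE} at all --- it is imported verbatim from \cite[Proposition 2]{Cedo1} --- so there is no internal proof to compare against; your proposal has to stand on its own. What you actually establish is correct: by Lemma~\ref{sec2:mark1}\eqref{lndequiv}, conditions (1)--(2) are equivalent to left non-degeneracy plus involutivity with $\rho_{y}(x)=\lambda_{\lambda_{x}(y)}^{-1}(x)$, and under these hypotheses condition (1) of Theorem~\ref{YBEs} is equivalent to condition (3) of the statement via the bijection $(x,y)\mapsto(\lambda_{x}(y),x)$ of $X\times X$. That gives the forward implication cleanly.

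The gap is in the converse, which is the substantive half of the theorem: you only \emph{announce} that conditions (2) and (3) of Theorem~\ref{YBEs} will follow ``with careful bookkeeping,'' and you carry out neither computation. As written, the proof is missing its two essential steps. They do in fact go through, and more easily than you anticipate, so this is a gap of execution rather than of strategy. For condition (2) of Theorem~\ref{YBEs}: substitute \eqref{eqInvolutive} for every $\rho$, use the already-proved condition (1) of Theorem~\ref{YBEs} to rewrite $\lambda_{\lambda_{x}(y)}\lambda_{\rho_{y}(x)}(z)$ as $\lambda_{x}\lambda_{y}(z)=:w$; both sides then become expressions in $w$, and the identity rearranges to
\[
\lambda_{x}\lambda_{\lambda_{x}^{-1}(w)}=\lambda_{w}\lambda_{\lambda_{w}^{-1}(x)},
\]
which is precisely your condition (3) for the pair $(x,w)$. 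For condition (3) of Theorem~\ref{YBEs}: after the same substitutions, both sides take the form $\lambda_{a}^{-1}\lambda_{w}^{-1}(x)$ with subscripts $a$ equal to the two sides of the identity just proved, so it follows from condition (2) of Theorem~\ref{YBEs}. You should supply these two computations explicitly; without them the proposal is an outline, not a proof.
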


Let $(X,r)$ and $(X^{\prime},r^{\prime})$ be quadratic sets. By
a morphism from $(X,r)$ to $(X^{\prime},r^{\prime})$ we mean a map
$f:X\rightarrow X^{\prime}$ satisfying $(f\times f)r=r^{\prime}(f\times f)$.

\begin{lem}\label{lemmorph}
Given two quadratic sets $(X,r)$ and $(X^{\prime},r^{\prime})$,
and a map $f:X\rightarrow X^{\prime}$, the following are equivalent:
\begin{enumerate}
\item $f$ is a morphism of quadratic sets;
\item $f\lambda_{x}=\lambda_{f(x)}^{\prime}f~\text{and}~f\rho_{x}=\rho_{f(x)}^{\prime}f~\text{for all}~x\in X$.
\end{enumerate}
If $r$ and $r^{\prime}$ are both left non-degenerate and involutive,
then both conditions above are equivalent to one of the following
conditions:
\begin{enumerate}
\item[(3)] $f\lambda_{x}=\lambda_{f(x)}^{\prime}f$ for all $x\in X$;
\item[(4)] $f\lambda_{x}^{-1}=\lambda_{f(x)}^{\prime-1}f$ {for all} $x\in X$.
\end{enumerate}
\end{lem}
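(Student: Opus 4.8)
The plan is to first establish the unconditional equivalence (1) $\Leftrightarrow$ (2) by a direct coordinate computation, and then handle the left non-degenerate involutive case by a combination of elementary inversion and an application of Lemma~\ref{sec2:mark1}.

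For (1) $\Leftrightarrow$ (2), I would evaluate both sides of the morphism identity $(f\times f)r=r^{\prime}(f\times f)$ on an arbitrary pair $(x,y)$. The left-hand side yields $(f\lambda_{x}(y),\,f\rho_{y}(x))$, while the right-hand side yields $(\lambda_{f(x)}^{\prime}f(y),\,\rho_{f(y)}^{\prime}f(x))$. Equating the first coordinates for all $x,y$ gives $f\lambda_{x}=\lambda_{f(x)}^{\prime}f$, and equating the second coordinates gives $f\rho_{y}=\rho_{f(y)}^{\prime}f$; conversely, these two families of relations reassemble into the morphism identity. This yields (1) $\Leftrightarrow$ (2) without any hypothesis on $r$ and $r^{\prime}$.

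Now assume $r$ and $r^{\prime}$ are left non-degenerate and involutive, so that every $\lambda_{x}$ and every $\lambda_{x^{\prime}}^{\prime}$ is bijective. The implication (2) $\Rightarrow$ (3) is immediate. The equivalence (3) $\Leftrightarrow$ (4) is pure algebra: starting from $f\lambda_{x}=\lambda_{f(x)}^{\prime}f$, I would compose on the left with $\lambda_{f(x)}^{\prime-1}$ and on the right with $\lambda_{x}^{-1}$ to obtain $f\lambda_{x}^{-1}=\lambda_{f(x)}^{\prime-1}f$, and reverse these steps for the converse.

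The substantive step is (3) $\Rightarrow$ (2), namely recovering the $\rho$-relation from the $\lambda$-relation. Here I would invoke Lemma~\ref{sec2:mark1}(2), which under left non-degeneracy and involutivity gives $\rho_{y}(x)=\lambda_{\lambda_{x}(y)}^{-1}(x)$, and likewise $\rho_{y^{\prime}}^{\prime}(x^{\prime})=\lambda_{\lambda_{x^{\prime}}^{\prime}(y^{\prime})}^{\prime-1}(x^{\prime})$, via~\eqref{eqInvolutive}. Then $f\rho_{y}(x)=f\lambda_{\lambda_{x}(y)}^{-1}(x)$, whereas $\rho_{f(y)}^{\prime}f(x)=\lambda_{\lambda_{f(x)}^{\prime}f(y)}^{\prime-1}(f(x))$. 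Condition (3) rewrites the inner subscript on the right as $\lambda_{f(x)}^{\prime}f(y)=f\lambda_{x}(y)$, so it remains to verify $f\lambda_{\lambda_{x}(y)}^{-1}(x)=\lambda_{f\lambda_{x}(y)}^{\prime-1}(f(x))$; but this is exactly the instance of (4) at the index $\lambda_{x}(y)$. Hence (3), together with the equivalent (4), forces the $\rho$-relation and establishes (3) $\Rightarrow$ (2). The only delicate point is bookkeeping the nested indices in this last identity, which is precisely why passing through the $\lambda^{-1}$-form (4) before substituting is the cleanest route.
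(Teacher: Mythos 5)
Your argument is correct and complete: the coordinatewise comparison gives (1) $\Leftrightarrow$ (2), the inversion argument gives (3) $\Leftrightarrow$ (4), and the key step (3) $\Rightarrow$ (2) correctly combines~\eqref{eqInvolutive} for both $r$ and $r^{\prime}$ with the instance of (4) at the index $\lambda_{x}(y)$. The paper states this lemma without proof, treating it as routine, and your write-up is exactly the natural verification one would supply.
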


By a Hom-quadratic set we mean a triple $(X,r,\alpha)$ of a nonempty
set $X$ with two maps $r:X\times X\to X\times X$ and $\alpha:X\to X$
such that $r(\alpha\times\alpha)=(\alpha\times\alpha)r$. Thus a Hom-quadratic
set is exactly a quadratic set with an endomorphism.

We will identify the quadratic set $(X,r)$ with the Hom-quadratic set
$(X,r,\id)$.

Given two Hom-quadratic sets $(X,r,\alpha)$ and $(X^{\prime},r^{\prime},\alpha^{\prime})$,
a map $f:X\rightarrow X^{\prime}$ is called a morphism of Hom-quadratic
sets if
\[
(f\times f)r=r^{\prime}(f\times f)~\text{and}~f\alpha=\alpha^{\prime}f.
\]
Thus a morphism of Hom-quadratic sets is exactly a morphism of quadratic
sets satisfying $f\alpha=\alpha^{\prime}f$.
\begin{cor}\label{corHQS}
Given a quadratic set $(X,r)$ and a map $\alpha:X\to X$,
the triple $(X,r,\alpha)$ is a Hom-quadratic set if and only if
\[
\alpha\lambda_{x}=\lambda_{\alpha(x)}\alpha~\text{and}~\alpha\rho_{x}=\rho_{\alpha(x)}\alpha~\text{for all}~x\in X.
\]
\end{cor}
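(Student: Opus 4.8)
The plan is to observe that this corollary is an immediate specialization of Lemma~\ref{lemmorph} to the case of a self-morphism. The defining condition of a Hom-quadratic set, $r(\alpha\times\alpha)=(\alpha\times\alpha)r$, is an equality of two maps $X\times X\to X\times X$ and is therefore symmetric; rewriting it as $(\alpha\times\alpha)r=r(\alpha\times\alpha)$ shows that it is exactly the condition that $\alpha$ is a morphism of quadratic sets from $(X,r)$ to itself. In other words, one takes $(X^{\prime},r^{\prime})=(X,r)$ and $f=\alpha$ in the definition of a morphism of quadratic sets, so that $\lambda^{\prime}=\lambda$ and $\rho^{\prime}=\rho$.

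First I would make this identification explicit: with $f=\alpha$ and $r^{\prime}=r$, the morphism condition $(f\times f)r=r^{\prime}(f\times f)$ of Lemma~\ref{lemmorph} reads $(\alpha\times\alpha)r=r(\alpha\times\alpha)$, which is precisely the Hom-quadratic condition. Then I would invoke the equivalence of conditions (1) and (2) in that lemma, whose second condition translates to $f\lambda_{x}=\lambda^{\prime}_{f(x)}f$ and $f\rho_{x}=\rho^{\prime}_{f(x)}f$ for all $x\in X$. Substituting $f=\alpha$, $\lambda^{\prime}=\lambda$, and $\rho^{\prime}=\rho$ yields exactly $\alpha\lambda_{x}=\lambda_{\alpha(x)}\alpha$ and $\alpha\rho_{x}=\rho_{\alpha(x)}\alpha$ for all $x\in X$, which is the asserted equivalence.

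There is no genuine obstacle here, since the corollary follows from the general equivalence (1)$\Leftrightarrow$(2) of Lemma~\ref{lemmorph}, which holds for arbitrary quadratic sets and requires neither non-degeneracy nor involutivity (so conditions (3) and (4) of that lemma are not needed). The only point deserving care is the bookkeeping check that the self-morphism instance faithfully reproduces the Hom-quadratic condition, i.e.\ that the commutation $r(\alpha\times\alpha)=(\alpha\times\alpha)r$ is literally the morphism identity for $\alpha$ regarded as an endomorphism of $(X,r)$; once this is noted, the statement is proved by direct appeal to the lemma.
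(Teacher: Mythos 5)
Your proposal is correct and matches the paper's intent exactly: the paper presents this statement as a corollary of Lemma~\ref{lemmorph}, obtained by viewing the Hom-quadratic condition $r(\alpha\times\alpha)=(\alpha\times\alpha)r$ as the assertion that $\alpha$ is an endomorphism of the quadratic set $(X,r)$ and then applying the equivalence (1)$\Leftrightarrow$(2) of that lemma with $f=\alpha$ and $(X^{\prime},r^{\prime})=(X,r)$. Your observation that only the general equivalence is needed, and not the non-degeneracy or involutivity hypotheses of parts (3) and (4), is also accurate.
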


A Hom-quadratic set $(X,r,\alpha)$  is called left non-degenerate, non-degenerate, and involutive, respectively, if $r$ has the same properties.

Denote by $\mathsf{QS}$ and $\mathsf{HQS}$ the categories of left non-degenerate involutive quadratic sets and left non-degenerate involutive
  Hom-quadratic sets, respectively. Then $\mathsf{QS}$ is a full subcategory of  $\mathsf{HQS}$ by identifying a quadratic set $(X,r)$ with a Hom-quadratic set $(X,r,\id)$.

By a groupoid we mean a set with a binary operation. For a groupoid
$X$, denote by $\sigma_{x}$ the left multiplication map by $x\in X$
defined by
\[
\sigma_{x}:X\to X,~~y\mapsto xy.
\]

By a left quasigroup we mean a groupoid $X$ such that the left multiplication
maps $\sigma_{x}$ are bijective for all $x\in X$ (see~\cite[Page 9]{Shcherbacov}).

It should be pointed out that the image of an endomorphism of a left
quasigroup is a left quasigroup, though the image of a homomorphism
from a left quasigroup to a groupoid need not to be a left quasigroup (see~\cite[Page 15]{Shcherbacov} and~\cite[Corollary 1.298]{Shcherbacov}).

By a left Hom-quasigroup we mean a pair $(X,\alpha)$ of a left quasigroup
$X$ with an endomorphism $\alpha$.

We also write a left Hom-quasigroup $(X,\alpha)$ as $(X,\cdot,\alpha)$ to indicate the operation $\cdot$ of left quasigroup $X$.

We can identify a left quasigroup $X$ with the left Hom-quasigroup
$(X,\id)$.

Let $(X,\alpha)$ and $(X^{\prime},\alpha^{\prime})$ be two left
Hom-quasigroups. A map $f:X\to X^{\prime}$ is called a morphism of
left Hom-quasigroups if $\alpha f=f\alpha^{\prime}$ and $f(xy)=f(x)f(y)$
for all $x,y\in X$.

From  Lemma~\ref{lemmorph}, we have the following lemma.
\begin{lem}
Let $(X,r,\alpha)$ and $(X^{\prime},r^{\prime},\alpha^{\prime})$
be left non-degenerate involutive Hom-quadratic sets,  and let $(X,\cdot,\alpha)$ and $(X',*,\alpha')$ be left Hom-quasigroups such that $x\cdot y=\lambda_{x}^{-1}(y)$ for all $x,y\in X$ and $x'*y'=\lambda'^{-1}_{x'}(y')$ for all $x',y'\in X'$. Then a map $f: X \rightarrow X'$ is  a morphism of  Hom-quadratic sets if and only if it is a morphism of left Hom-quasigroups.
\end{lem}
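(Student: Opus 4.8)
The plan is to decompose each of the two morphism notions into an $\alpha$-compatibility part and a multiplicative (resp. braiding) part, to observe that the $\alpha$-parts are literally the same relation, and then to match the two remaining parts by invoking the equivalence already recorded in Lemma~\ref{lemmorph}. First I would unwind the definitions. By definition $f$ is a morphism of Hom-quadratic sets exactly when $(f\times f)r=r'(f\times f)$, i.e. $f$ is a morphism of the underlying quadratic sets $(X,r)$ and $(X',r')$, together with the intertwining $f\alpha=\alpha'f$. Likewise $f$ is a morphism of left Hom-quasigroups exactly when $f(xy)=f(x)f(y)$ for all $x,y$, i.e. $f$ is a homomorphism of the underlying left quasigroups $(X,\cdot)$ and $(X',*)$, together with the same compatibility relating $f$ to $\alpha$ and $\alpha'$. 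Since these endomorphism-compatibility conditions coincide, it suffices to prove that $f$ is a morphism of the quadratic sets $(X,r),(X',r')$ if and only if $f$ is a homomorphism of the left quasigroups $(X,\cdot),(X',*)$.

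For this reduced equivalence I would invoke Lemma~\ref{lemmorph}. Because $(X,r)$ and $(X',r')$ are left non-degenerate and involutive, that lemma says $f$ is a morphism of quadratic sets if and only if its condition (4) holds, namely $f\lambda_x^{-1}=\lambda_{f(x)}^{\prime-1}f$ for all $x\in X$. I then translate this operator identity into quasigroup language using the defining formulas $x\cdot y=\lambda_x^{-1}(y)$ and $x'*y'=\lambda_{x'}^{\prime-1}(y')$: for $x,y\in X$ one has $f(x\cdot y)=f\lambda_x^{-1}(y)$, while $f(x)*f(y)=\lambda_{f(x)}^{\prime-1}(f(y))=\lambda_{f(x)}^{\prime-1}f(y)$. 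Hence the homomorphism identity $f(x\cdot y)=f(x)*f(y)$, asserted for all $x,y$, is precisely the operator identity $f\lambda_x^{-1}=\lambda_{f(x)}^{\prime-1}f$ for all $x$, which is exactly condition (4). Chaining the two equivalences yields the claim.

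There is essentially no analytic obstacle here; the whole content lies in getting the bookkeeping right. The one point that genuinely drives the argument — and that I would flag — is that it is the inverse form (4) of the morphism condition, rather than the original definition or the form (3), that matches multiplicativity on the nose, precisely because the left-quasigroup operation is built from $\lambda_x^{-1}$ and not from $\lambda_x$. Everything beyond that is the substitution of the defining formulas for $\cdot$ and $*$. I would also note in passing that left non-degeneracy makes each $\sigma_x=\lambda_x^{-1}$ bijective, so that $(X,\cdot)$ and $(X',*)$ are indeed left quasigroups; but since the statement already posits them as left Hom-quasigroups, this requires no separate verification.
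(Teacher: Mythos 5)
Your proof is correct and follows exactly the route the paper intends: the paper simply states that the lemma follows from Lemma~\ref{lemmorph}, and your argument spells out that derivation by separating the common condition $f\alpha=\alpha'f$ and matching multiplicativity of $\cdot$ and $*$ with condition (4), $f\lambda_x^{-1}=\lambda_{f(x)}^{\prime-1}f$. No gaps.
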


Denote by $\mathsf{HQG}$ and $\mathsf{QG}$ the categories of left quasigroups and  left Hom-quasigroups, respectively. Then $\mathsf{HQG}$ is a full subcategory of $\mathsf{QG}$ by identifying a left   quasigroup $X$ with a left Hom-quasigroup $(X,\id)$.

Given a left non-degenerate involutive Hom-quadratic set $(X,r,\alpha)$,
we get a left Hom-quasigroups $(X,\cdot,\alpha)$, denoted by $G(X,r,\alpha)$,
with the operation defined by $x\cdot y=\lambda_{x}^{-1}(y)$ for
all $x,y\in X$. Then we have a functor $G:\mathsf{HQS}\to\mathsf{HQG}$
by associating $(X,r,\alpha)$ with $G(X,r,\alpha)$, and a morphism $f$ in $\mathsf{HQS}$
with $f$.

Conversely, given a left Hom-quasigroup $(X,\cdot,\alpha)$, we get
a Hom-quadratic set $(X,r,\alpha)$, denoted by $S(X,\cdot,\alpha)$,
with $\lambda_{x}(y)=\sigma_{x}^{-1}(y)$ and $\rho_{y}(x)=\sigma_{x}^{-1}(y)\cdot x$
for all $x,y\in X$. It is routine to verify that $(X,r,\alpha)$
is left non-degenerate and involutive. Then we have a functor $S:\mathsf{HQG}\to\mathsf{HQS}$
by associating  $(X,\cdot,\alpha)$
with $S(X,\cdot,\alpha)$ and a morphism $f$ in $\mathsf{HQG}$
with $f$.

\begin{thm}\label{LemlnditP}
The functors $G$ and $S$ are mutually inverse,
and so the categories $\mathsf{HQS}$ and $\mathsf{HQG}$ are
isomorphic.
\end{thm}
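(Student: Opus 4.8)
The plan is to exploit the fact that both $G$ and $S$ act as the identity on underlying sets and send each morphism $f$ to $f$ itself. Consequently, proving that they are mutually inverse reduces entirely to verifying the two object-level identities $SG=\id_{\mathsf{HQS}}$ and $GS=\id_{\mathsf{HQG}}$: once these hold on objects they hold trivially on morphisms, and the isomorphism of categories follows at once. I would therefore not revisit functoriality or well-definedness (already granted when $G$ and $S$ were introduced) but concentrate solely on unwinding the two constructions and matching them up.

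First I would treat $SG$. Starting from a left non-degenerate involutive Hom-quadratic set $(X,r,\alpha)$, the functor $G$ produces $(X,\cdot,\alpha)$ with $x\cdot y=\lambda_{x}^{-1}(y)$, so the left multiplication satisfies $\sigma_{x}=\lambda_{x}^{-1}$ and hence $\sigma_{x}^{-1}=\lambda_{x}$. Applying $S$ then yields a Hom-quadratic set whose first component is $\lambda'_{x}(y)=\sigma_{x}^{-1}(y)=\lambda_{x}(y)$, so $\lambda'=\lambda$, and whose second component is $\rho'_{y}(x)=\sigma_{x}^{-1}(y)\cdot x=\lambda_{x}(y)\cdot x=\lambda_{\lambda_{x}(y)}^{-1}(x)$. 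The crucial input here is Lemma~\ref{sec2:mark1}(\ref{lndequiv}): since $(X,r,\alpha)$ is left non-degenerate and involutive, equation~\eqref{eqInvolutive} gives $\rho_{y}(x)=\lambda_{\lambda_{x}(y)}^{-1}(x)$, so $\rho'=\rho$ and therefore $SG(X,r,\alpha)=(X,r,\alpha)$.

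Next I would treat $GS$, the easier direction. Starting from a left Hom-quasigroup $(X,\cdot,\alpha)$, the functor $S$ gives $(X,r,\alpha)$ with $\lambda_{x}=\sigma_{x}^{-1}$, whence $\lambda_{x}^{-1}=\sigma_{x}$. Applying $G$ produces the operation $x\ast y=\lambda_{x}^{-1}(y)=\sigma_{x}(y)=x\cdot y$, so $\ast$ coincides with $\cdot$ and $GS(X,\cdot,\alpha)=(X,\cdot,\alpha)$. Together the two computations show $SG=\id_{\mathsf{HQS}}$ and $GS=\id_{\mathsf{HQG}}$ on objects, and hence on morphisms, giving the asserted isomorphism.

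I expect the only non-mechanical step to be the appeal to equation~\eqref{eqInvolutive} in the $SG$ computation, which is precisely what forces the recovered $\rho'$ to agree with the original $\rho$; every remaining step is a direct substitution using $\sigma_{x}=\lambda_{x}^{-1}$. No separate naturality argument is required, since both functors send each morphism to itself.
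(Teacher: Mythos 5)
Your proof is correct and fills in exactly the routine verification that the paper dismisses with ``It is straightforward'': checking $SG=\id$ and $GS=\id$ on objects via $\sigma_x=\lambda_x^{-1}$, with Lemma~\ref{sec2:mark1}(\ref{lndequiv}) (equation~\eqref{eqInvolutive}) supplying the one nontrivial identification $\rho'=\rho$. This is the same (and essentially the only) approach, so nothing further is needed.
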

\begin{proof}
It is straightforward.
\end{proof}

 The functor $G$ induces a functor
from $\mathsf{QS}$ to $\mathsf{QG}$ and $S$ induces a
functor from $\mathsf{QG}$ to $\mathsf{QS}$. We still denote
the induced functors by $G$ and $S$, respectively. By Theorem~\ref{LemlnditP}, we have the following corollary.

\begin{cor}\label{LemlndiqLqg}
The functors $G: \mathsf{QS}\to\mathsf{QG}$ and $S: \mathsf{QG}\to\mathsf{QS}$ are mutually inverse,
and so the categories $\mathsf{QS}$ and $\mathsf{QG}$ are isomorphic.
\end{cor}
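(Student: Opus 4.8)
The plan is to deduce the statement from Theorem~\ref{LemlnditP} by observing that $G$ and $S$ carry the objects with $\id$-endomorphism to one another, so that they restrict cleanly to the full subcategories $\mathsf{QS}\subseteq\mathsf{HQS}$ and $\mathsf{QG}\subseteq\mathsf{HQG}$.

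First I would record that both $G$ and $S$ leave the endomorphism component untouched: by construction $G(X,r,\alpha)=(X,\cdot,\alpha)$ and $S(X,\cdot,\alpha)=(X,r,\alpha)$, and on morphisms each sends $f$ to $f$. In particular $G(X,r,\id)=(X,\cdot,\id)$ and $S(X,\cdot,\id)=(X,r,\id)$. Since $\mathsf{QS}$ is exactly the full subcategory of $\mathsf{HQS}$ whose objects are the $(X,r,\id)$ (under the identification of a quadratic set $(X,r)$ with $(X,r,\id)$), and likewise $\mathsf{QG}$ is the full subcategory of $\mathsf{HQG}$ whose objects are the $(X,\cdot,\id)$, this shows that $G$ maps objects of $\mathsf{QS}$ into $\mathsf{QG}$ and $S$ maps objects of $\mathsf{QG}$ into $\mathsf{QS}$. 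Fullness of the two inclusions guarantees that every morphism of a subcategory is a morphism of the ambient category, so applying $G$ or $S$ to it again lands inside the subcategory. Thus $G$ and $S$ genuinely induce functors $G:\mathsf{QS}\to\mathsf{QG}$ and $S:\mathsf{QG}\to\mathsf{QS}$, and these induced functors are simply the restrictions of the ambient ones.

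It then remains to check that the restrictions stay mutually inverse. By Theorem~\ref{LemlnditP} we have the equalities of functors $S\circ G=\id_{\mathsf{HQS}}$ and $G\circ S=\id_{\mathsf{HQG}}$. Restricting the first to $\mathsf{QS}$ and the second to $\mathsf{QG}$ yields $S\circ G=\id_{\mathsf{QS}}$ and $G\circ S=\id_{\mathsf{QG}}$, since restricting a functor that equals the identity produces the identity on the subcategory. Hence the induced $G$ and $S$ are mutually inverse, and $\mathsf{QS}\cong\mathsf{QG}$.

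There is essentially no obstacle here: the only thing requiring verification is that the two functors preserve the condition ``$\alpha=\id$'', and this is immediate from the explicit formulas defining $G$ and $S$, both of which transport $\alpha$ without alteration. The statement is then a purely formal consequence of Theorem~\ref{LemlnditP}, being the restriction of an isomorphism of categories to a pair of corresponding full subcategories.
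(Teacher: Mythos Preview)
Your proof is correct and follows exactly the paper's approach: the paper simply notes that $G$ and $S$ induce functors between $\mathsf{QS}$ and $\mathsf{QG}$ and then states the corollary as an immediate consequence of Theorem~\ref{LemlnditP}, which is precisely the restriction-to-full-subcategories argument you spell out in detail.
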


Denote by $\mathsf{S_{ybe}}$ the category of left non-degenerate
involutive solutions to YBE. Then $\mathsf{S_{ybe}}$ is a full
subcategory of $\mathsf{QS}$.

A left quasigroup $X$ is called a left cycle set if $(xy)(xz)=(yx)(yz)$
for all $x,y,z\in X$~\cite{Rump1}.

Denote by $\mathsf{CS}$ the category of left cycle sets. Then $\mathsf{CS}$
is a full subcategory of $\mathsf{QG}$.

By~\cite[Proposition 1]{Rump1}, the functor $G$ induces a functor
from $\mathsf{S_{ybe}}$ to $\mathsf{CS}$ and $S$ induces a
functor from $\mathsf{CS}$ to $\mathsf{S_{ybe}}$. We still denote
the induced functors by $G$ and $S$, respectively. By Corollary
~\ref{LemlndiqLqg},~\cite[Proposition 1]{Rump1} can be restated as
follows.

\begin{thm}\label{TemLndisC}
The functors $G:\mathsf{S_{ybe}}\to\mathsf{CS}$
and $S:\mathsf{CS}\to\mathsf{S_{ybe}}$ are mutually inverse,
and so the categories $\mathsf{S_{ybe}}$ and $\mathsf{CS}$ are
isomorphic.
\end{thm}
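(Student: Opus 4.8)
The plan is to leverage what is already proved: by Corollary~\ref{LemlndiqLqg} the functors $G$ and $S$ are mutually inverse between $\mathsf{QS}$ and $\mathsf{QG}$, so they give an isomorphism of these ambient categories. Since $\mathsf{S_{ybe}}$ is a full subcategory of $\mathsf{QS}$ and $\mathsf{CS}$ is a full subcategory of $\mathsf{QG}$, and since the induced functors act on morphisms simply by $f\mapsto f$, the only thing left to verify is that $G$ and $S$ carry objects of these subcategories into one another. Once that object-level statement is available, the restricted functors are automatically mutually inverse, because the identities $GS=\id$ and $SG=\id$ supplied by Corollary~\ref{LemlndiqLqg} restrict verbatim to the full subcategories, and fullness guarantees that a morphism between two objects of $\mathsf{S_{ybe}}$ (resp. $\mathsf{CS}$) is exactly a morphism in $\mathsf{QS}$ (resp. $\mathsf{QG}$).

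Thus the heart of the argument is the object-level equivalence: for a left non-degenerate involutive quadratic set $(X,r)$ with associated left quasigroup $G(X,r)=(X,\cdot)$, where $x\cdot y=\lambda_{x}^{-1}(y)$, the map $r$ solves YBE if and only if $(X,\cdot)$ is a left cycle set. I would establish this from Theorem~\ref{LNDISYBE}. As $(X,r)$ is assumed left non-degenerate and involutive, conditions (1) and (2) of Theorem~\ref{LNDISYBE} hold automatically, so $r$ solves YBE precisely when condition (3), namely $\lambda_{x}\lambda_{\lambda_{x}^{-1}(y)}=\lambda_{y}\lambda_{\lambda_{y}^{-1}(x)}$, holds. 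Writing $\sigma_{x}$ for the left multiplication in $(X,\cdot)$, one has $\sigma_{x}=\lambda_{x}^{-1}$ and $\lambda_{x}^{-1}(y)=x\cdot y$, so inverting both sides shows condition (3) is equivalent to $\sigma_{x\cdot y}\sigma_{x}=\sigma_{y\cdot x}\sigma_{y}$; evaluating at an arbitrary $z$ turns this into $(x\cdot y)\cdot(x\cdot z)=(y\cdot x)\cdot(y\cdot z)$, which is exactly the defining identity of a left cycle set. This is the categorical restatement of~\cite[Proposition 1]{Rump1}.

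With the equivalence in hand, I would conclude as follows. If $(X,r)\in\mathsf{S_{ybe}}$, then $G(X,r)\in\mathsf{CS}$; conversely, if $(X,\cdot)\in\mathsf{CS}$ and $(X,r)=S(X,\cdot)$, then $G(X,r)=(X,\cdot)$ is a left cycle set, so by the equivalence $(X,r)$ is a left non-degenerate involutive solution to YBE, i.e. $(X,r)\in\mathsf{S_{ybe}}$. Hence $G$ and $S$ genuinely restrict to functors $\mathsf{S_{ybe}}\to\mathsf{CS}$ and $\mathsf{CS}\to\mathsf{S_{ybe}}$, and these restrictions inherit the mutual-inverse property from Corollary~\ref{LemlndiqLqg}. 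The only point deserving real care is the faithful translation of condition (3) into the cycle set identity via the inversion $\sigma_{x}=\lambda_{x}^{-1}$; I do not expect a genuine obstacle, since everything beyond that computation is a formal consequence of the isomorphism of the ambient categories.
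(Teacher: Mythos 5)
Your proposal is correct and follows essentially the same route as the paper, which states this theorem as a categorical restatement of Rump's Proposition~1 obtained by restricting the ambient isomorphism of Corollary~\ref{LemlndiqLqg} to the full subcategories $\mathsf{S_{ybe}}$ and $\mathsf{CS}$. The only difference is that you explicitly verify the object-level equivalence (condition~(3) of Theorem~\ref{LNDISYBE} translating, via $\sigma_{x}=\lambda_{x}^{-1}$, into the cycle-set identity $(xy)(xz)=(yx)(yz)$), a computation the paper delegates to the citation of Rump; your translation is accurate.
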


A groupoid is called $\Delta$-bijective if  the map $\Delta$ is bijective, where
$\Delta:X\times X\to X\times X,~~(x,y)\mapsto(xy,yx)$~\cite{Bonatto21}.

The following lemma comes from~\cite[Lemma 2.10]{Bonatto21} (see
also Lemma 1.28 in Chapter XIII of~\cite{Dehornoy3}).

\begin{lem}\label{lemPhi}
A groupoid $X$ is $\Delta$-bijective if and only if
there exists an operation $\circ$ on $X$ (called the dual operation) such that
\begin{gather}
(x\cdot y)\circ(y\cdot x)=x,\label{eqxyyxd}\\
(x\circ y)\cdot(y\circ x)=x,\label{eqxyyxc}
\end{gather}
for all $x,y\in X$.
Furthermore, if  conditions hold, then the operation $\circ$
is unique and the inverse of $\Delta$ is given by $(x,y)\mapsto(x\circ y,y\circ x)$.
\end{lem}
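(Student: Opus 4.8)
The plan is to prove both implications by producing an explicit two-sided inverse for $\Delta$, exploiting the fact that $\Delta$ is symmetric under interchanging the two coordinates.

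For the ``if'' direction, suppose such an operation $\circ$ exists, and define the candidate inverse $\Theta:X\times X\to X\times X$ by $\Theta(x,y)=(x\circ y,\,y\circ x)$. I would verify $\Theta\Delta=\id$ and $\Delta\Theta=\id$ by direct substitution. Indeed, $\Theta\Delta(x,y)=\Theta(x\cdot y,\,y\cdot x)=\bigl((x\cdot y)\circ(y\cdot x),\,(y\cdot x)\circ(x\cdot y)\bigr)$, and applying \eqref{eqxyyxd} once to the pair $(x,y)$ and once to the pair $(y,x)$ collapses this to $(x,y)$. The computation of $\Delta\Theta$ is identical but uses \eqref{eqxyyxc} in place of \eqref{eqxyyxd}. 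This at once shows $\Delta$ is bijective and that its inverse is $(x,y)\mapsto(x\circ y,\,y\circ x)$, as claimed.

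For the ``only if'' direction, assume $\Delta$ is bijective and write $\Delta^{-1}(x,y)=\bigl(f(x,y),g(x,y)\bigr)$. The key observation is that $\Delta$ commutes with the coordinate swap $\tau(x,y)=(y,x)$, since $\Delta\tau(x,y)=(y\cdot x,\,x\cdot y)=\tau\Delta(x,y)$. Consequently $\Delta^{-1}$ commutes with $\tau$ as well, and comparing the two coordinates of $\Delta^{-1}\tau=\tau\Delta^{-1}$ forces $g(x,y)=f(y,x)$. Thus, setting $x\circ y:=f(x,y)$ gives $\Delta^{-1}(x,y)=(x\circ y,\,y\circ x)$. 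Applying $\Delta$ to this identity and reading off the first coordinate yields \eqref{eqxyyxc}, while applying $\Delta^{-1}$ to $\Delta(x,y)=(x\cdot y,\,y\cdot x)$ and reading off the first coordinate yields \eqref{eqxyyxd}.

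Uniqueness is then immediate from the first paragraph: any operation satisfying the two identities turns $(x,y)\mapsto(x\circ y,\,y\circ x)$ into the inverse of $\Delta$, and since the inverse of a bijection is unique, two such operations must agree in the first coordinate, hence coincide. I expect the main obstacle to be the converse direction, specifically ensuring that the second coordinate of $\Delta^{-1}$ really is $y\circ x$ rather than some unrelated function of $(x,y)$; the swap-commutativity of $\Delta$ is exactly what supplies this relation, and recognizing that symmetry is the crux of the argument. The remaining verifications are routine substitutions into \eqref{eqxyyxd} and \eqref{eqxyyxc}.
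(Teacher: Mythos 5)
Your proof is correct. Both directions check out: the ``if'' direction is a routine verification that $(x,y)\mapsto(x\circ y,\,y\circ x)$ is a two-sided inverse of $\Delta$, and in the ``only if'' direction you correctly identify the one non-trivial point, namely that the second coordinate of $\Delta^{-1}$ must be the swap of the first; your argument via $\Delta\tau=\tau\Delta$ (hence $\Delta^{-1}\tau=\tau\Delta^{-1}$), which forces $g(x,y)=f(y,x)$, settles this cleanly, and the uniqueness claim follows from uniqueness of the inverse of a bijection. Note that the paper itself gives no proof of this lemma --- it is imported from \cite[Lemma 2.10]{Bonatto21} (see also Lemma 1.28 in Chapter XIII of \cite{Dehornoy3}) --- so there is no in-paper argument to compare against; your write-up is a complete, self-contained proof consistent with the standard one in those references.
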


\begin{lem}~(see~\cite[Lemma 2.11]{Bonatto21})\label{lem2div}
If a groupoid
$X$ is $\Delta$-bijective, then the square map $q:X\to X,~~x\mapsto x^{2}$
is invertible.
\end{lem}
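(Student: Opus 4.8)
The plan is to use the dual operation $\circ$ furnished by Lemma~\ref{lemPhi} to exhibit an explicit two-sided inverse for the square map $q$. Since $X$ is $\Delta$-bijective, Lemma~\ref{lemPhi} guarantees a (unique) operation $\circ$ on $X$ satisfying~\eqref{eqxyyxd} and~\eqref{eqxyyxc} for all $x,y\in X$. My candidate for the inverse of $q$ is the map $p:X\to X$ defined by $p(x)=x\circ x$.

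The key observation is that both defining identities of $\circ$ collapse to statements about $q$ and $p$ once we specialize $y=x$, exploiting the fact that $\Delta$ maps the diagonal to the diagonal. Putting $y=x$ in~\eqref{eqxyyxd} yields $(x\cdot x)\circ(x\cdot x)=x$, that is $p(q(x))=x^{2}\circ x^{2}=x$, so $p\circ q=\id$. Putting $y=x$ in~\eqref{eqxyyxc} yields $(x\circ x)\cdot(x\circ x)=x$, that is $q(p(x))=(x\circ x)^{2}=x$, so $q\circ p=\id$. Hence $q$ is bijective with inverse $p$, as required.

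I expect essentially no obstacle here: the substantive content is already encapsulated in Lemma~\ref{lemPhi}, and the argument reduces to the diagonal specialization $y=x$. Should a more structural formulation be preferred, the same conclusion follows by noting that the diagonal embedding $\delta:X\to X\times X,~x\mapsto(x,x)$, intertwines $q$ with $\Delta$, since $\Delta(x,x)=(x^{2},x^{2})=\delta(q(x))$; applying the formula $\Delta^{-1}(u,v)=(u\circ v,v\circ u)$ to the diagonal then recovers $p$, and the invertibility of $q$ drops out from that of $\Delta$ together with the injectivity of $\delta$.
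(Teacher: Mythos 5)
Your argument is correct: specializing $y=x$ in~\eqref{eqxyyxd} and~\eqref{eqxyyxc} shows that $p(x)=x\circ x$ is a two-sided inverse of $q$, which is exactly the standard argument behind the cited result. The paper itself offers no proof, only the reference to~\cite[Lemma 2.11]{Bonatto21}, so there is nothing to compare beyond noting that your diagonal specialization is the expected route.
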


We will say that a groupoid with extra structure is non-degenerate
if the underlying groupoid is $\Delta$-bijective.

Denote by $\mathsf{ndCS}$ the category of non-degenerate left cycle
sets. Then $\mathsf{ndCS}$ is   a full subcategory of $\mathsf{QG}$.

Denote by $\mathsf{ndiS_{ybe}}$ the category of non-degenerate involutive
solution to YBE. Then $\mathsf{ndiS_{ybe}}$ is a full subcategory
of $\mathsf{QS}$.

By Corollary~\ref{LemlndiqLqg} and Theorem~\ref{TemLndisC},  Proposition
2 in~\cite{Rump1} can be restated as follows.

\begin{thm}
The functors $G:\mathsf{ndiS_{ybe}}\to\mathsf{ndCS}$
and $S:\mathsf{ndCS}\to\mathsf{ndiS_{ybe}}$ are mutually inverse,
and so the categories $\mathsf{ndiS_{ybe}}$ and $\mathsf{ndCS}$
are isomorphic.
\end{thm}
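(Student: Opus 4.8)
The plan is to deduce this statement from the category isomorphism $\mathsf{S_{ybe}}\cong\mathsf{CS}$ of Theorem~\ref{TemLndisC} by checking that it restricts to the full subcategories of non-degenerate objects. Since $\mathsf{ndiS_{ybe}}$ is a full subcategory of $\mathsf{S_{ybe}}$ and $\mathsf{ndCS}$ is a full subcategory of $\mathsf{CS}$, the functors $G$ and $S$ act on morphisms exactly as in Theorem~\ref{TemLndisC}; consequently the only point needing proof is that $G$ and $S$ match up non-degeneracy at the level of objects. I would isolate this as the single equivalence: for $(X,r)\in\mathsf{S_{ybe}}$ with associated left cycle set $(X,\cdot)=G(X,r)$, the solution $r$ is non-degenerate if and only if $(X,\cdot)$ is $\Delta$-bijective. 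As every object of $\mathsf{S_{ybe}}$ is already left non-degenerate, ``non-degenerate'' here means precisely that every $\rho_{y}$ is bijective, i.e.\ right non-degeneracy.

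The core of the argument is a direct analysis of $\Delta\colon (x,a)\mapsto(x\cdot a,a\cdot x)$. First I record the bridge identity between the two structures: from $x\cdot y=\lambda_{x}^{-1}(y)$ we have $\sigma_{x}=\lambda_{x}^{-1}$, and combining this with~\eqref{eqInvolutive} gives
\[
\rho_{y}(x)=\lambda_{\lambda_{x}(y)}^{-1}(x)=\sigma_{\lambda_{x}(y)}(x)=\lambda_{x}(y)\cdot x,
\]
where $\lambda_{x}(y)=\sigma_{x}^{-1}(y)$. Now fix $(y,z)\in X\times X$ and count the solutions of the system $x\cdot a=y$, $a\cdot x=z$ defining the fibre of $\Delta$. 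Because $\sigma_{x}$ is bijective, the first equation determines $a=\sigma_{x}^{-1}(y)=\lambda_{x}(y)$ uniquely from $x$, and the second equation then reads $\lambda_{x}(y)\cdot x=z$, i.e.\ $\rho_{y}(x)=z$ by the identity above. Hence $\Delta$ is bijective if and only if for every $(y,z)$ there is a unique $x$ with $\rho_{y}(x)=z$, that is, if and only if each $\rho_{y}$ is a bijection. This is exactly right non-degeneracy of $r$, which establishes the equivalence.

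With this equivalence the theorem follows formally: $G$ carries $\mathsf{ndiS_{ybe}}$ into $\mathsf{ndCS}$ and $S$ carries $\mathsf{ndCS}$ into $\mathsf{ndiS_{ybe}}$, so the functors of Theorem~\ref{TemLndisC} restrict to mutually inverse functors between these full subcategories, and fullness guarantees that no morphisms are lost in the restriction. I expect the only genuine obstacle to be the bookkeeping in the $\Delta$-bijectivity computation --- in particular, using the reparametrization $a=\sigma_{x}^{-1}(y)$ to convert the two-variable fibre condition into the one-variable statement that $\rho_{y}$ is bijective; once this is set up correctly, everything else is the purely formal restriction of an already-established isomorphism of categories.
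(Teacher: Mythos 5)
Your proof is correct. The reduction to the single object-level equivalence (``$\Delta$-bijectivity of $G(X,r)$ $\Leftrightarrow$ right non-degeneracy of $r$''), together with the observation that both subcategories are full so the functors of Theorem~\ref{TemLndisC} restrict automatically on morphisms, is exactly the right decomposition, and your fibre computation for $\Delta$ is sound: since $\sigma_{x}$ is bijective, the fibre of $\Delta$ over $(y,z)$ is parametrized by $\{x:\rho_{y}(x)=z\}$ via $a=\lambda_{x}(y)$, using $\rho_{y}(x)=\lambda_{x}(y)\cdot x$, which follows from~\eqref{eqInvolutive}. Where you differ from the paper: the paper gives no proof of this statement at all --- it presents it as a restatement of Rump's Proposition~2 --- but it does prove the Hom-generalization (Theorem~\ref{thdual}), and there the argument runs through the dual operation $\circ$ of Lemma~\ref{lemPhi}: in one direction it exhibits $\tau_{y}\colon x\mapsto y\circ x$ as a two-sided inverse of $\rho_{y}$ using the identities $(x\cdot y)\circ(y\cdot x)=x$ and $(x\circ y)\cdot(y\circ x)=x$, and in the other it defines $x\circ y=\rho_{x}^{-1}(y)$ and verifies those identities from involutivity. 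Your direct count of the fibres of $\Delta$ is more elementary in that it never needs to name the dual operation or invoke Lemma~\ref{lemPhi}; what the paper's route buys instead is the explicit formula $y\circ x=\rho_{y}^{-1}(x)$ for the inverse of $\Delta$, which it reuses later (e.g.\ in Theorem~\ref{ProDualcy} and Theorem~\ref{thndtw}). Both arguments establish the same equivalence, and your formal restriction step at the end is complete.
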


\section{Set-theoretic solutions to the Hom-Yang-Baxter equation}

The Hom-Yang-Baxter equation was   proposed by Yau~\cite{Yau} motivated
by Hom-Lie algebras.

\begin{defn}
Given a vector space $V$ and two linear maps $R:V\otimes V\rightarrow V\otimes V$
and $\alpha:V\rightarrow V$, the triple $(V,R,\alpha)$ is called a solution to the Hom-Yang-Baxter equation, if
\begin{enumerate}
\item $R(\alpha\otimes\alpha)=(\alpha\otimes\alpha)R$, and
\item $(\alpha\otimes R)(R\otimes\alpha)(\alpha\otimes R)=(R\otimes\alpha)(\alpha\otimes R)(R\otimes\alpha)$.
\end{enumerate}
\end{defn}

By analogy with set-theoretic solutions to YBE, we introduce set-theoretic
solutions to HYBE.

\begin{defn}
Given a nonempty set $X$ and two maps $r:X\times X\rightarrow X\times X$
and $\alpha:X\rightarrow X$, the triple $(X,r,\alpha)$ is called a set-theoretic
solution to HYBE, if
\begin{enumerate}
\item $r\circ(\alpha\times\alpha)=(\alpha\times\alpha)\circ r$, and
\item $(\alpha\times r)(r\times\alpha)(\alpha\times r)=(r\times\alpha)(\alpha\times r)(r\times\alpha)$.
\end{enumerate}
\end{defn}

Clearly, $(X,r)$ is a set-theoretic solution to YBE if and only if
$(X,r,\id)$ is a set-theoretic solution to HYBE.

Let $f:X\to X$ be a map and $Y$ a subset of $X$. Denote by $f|_{Y}$
the restriction of $f$ to $Y$. When there is no ambiguity, we will write $f$ for the restriction $f|_{Y}$.

By analogy with the relation between set-theoretic solutions to YBE
and solutions to YBE, we have the following theorem, and the proof   is immediate.

\begin{thm}
Let $V$ be a vector space with a basis $X$.
\begin{enumerate}
\item If $(V,R,\alpha)$ is a solution to HYBE such that $R(X\otimes X)\subseteq X\otimes X$
and $\alpha(X)\subseteq X$, then $(X,R|_{X\otimes X},\alpha|_{X})$
is a set-theoretic solution to  HYBE.
\item Conversely, if $(X,r,\alpha)$ is a set-theoretic solution to HYBE,
and $R:V\rightarrow V$ and $\bar{\alpha}:V\to V$ are linear extensions
of $r$ and $\alpha$, respectively, then $(V,R,\bar{\alpha})$ is
a solution to HYBE.
\end{enumerate}
\end{thm}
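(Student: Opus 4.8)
The plan is to translate the tensor-algebra identities defining a HYBE solution into the corresponding set-theoretic identities, exploiting the fact that $X$ is a basis of $V$ and that the relevant maps restrict to, or extend from, the basis. The key observation is that a linear map on $V\otimes V$ (respectively $V$) is completely determined by its action on the basis $X\times X$ (respectively $X$), and conversely a set map on $X\times X$ (respectively $X$) extends uniquely to a linear map. Under this dictionary, the two conditions defining a solution to HYBE—commutativity $R(\alpha\otimes\alpha)=(\alpha\otimes\alpha)R$ and the hexagonal braid relation—are linear identities that hold on all of $V\otimes V\otimes V$ if and only if they hold on the basis triples $X\times X\times X$, which is precisely the set-theoretic HYBE.

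For part~(1), I would argue as follows. Since $R(X\otimes X)\subseteq X\otimes X$ and $\alpha(X)\subseteq X$, the restrictions $r:=R|_{X\otimes X}$ and $\alpha|_X$ are genuine maps $X\times X\to X\times X$ and $X\to X$, and $R$, $\bar\alpha$ are their linear extensions. The compatibility $R(\alpha\otimes\alpha)=(\alpha\otimes\alpha)R$, being a linear identity, restricts to $r(\alpha\times\alpha)=(\alpha\times\alpha)r$ on $X\times X$. Likewise, the operators $\alpha\otimes R$, $R\otimes\alpha$ on $V\otimes V\otimes V$ restrict on $X\times X\times X$ to the set maps $\alpha\times r$ and $r\times\alpha$, so the hexagon identity for $(V,R,\alpha)$ restricts to the hexagon identity for $(X,r,\alpha|_X)$. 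Hence $(X,R|_{X\otimes X},\alpha|_X)$ is a set-theoretic solution to HYBE.

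For part~(2), the argument runs in the opposite direction. Starting from the set-theoretic identities, I would observe that both sides of each of the two defining conditions are linear maps (on $V\otimes V$ and on $V\otimes V\otimes V$, respectively) that agree on the basis $X\times X$ or $X\times X\times X$ by hypothesis; since two linear maps agreeing on a basis are equal, the identities hold on all of $V$, so $(V,R,\bar\alpha)$ is a solution to HYBE. The minor bookkeeping point is to verify that the linear extension of a product of set maps equals the corresponding product of linear extensions—for instance, that the extension of $\alpha\times r$ equals $\bar\alpha\otimes R$—which follows from the universal property of the tensor product and the compatibility of linear extension with composition of basis-preserving maps.

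The only genuine subtlety, and the single step I would flag, is this extension-commutes-with-composition bookkeeping: one must be careful that $\overline{r\times\alpha}=R\otimes\bar\alpha$ and that composites of such extended operators coincide with extensions of the set-theoretic composites. Once this is granted, everything reduces to the elementary principle that a linear map is determined by its values on a basis, which is exactly why the paper calls the proof immediate. I would therefore present the proof as a short remark invoking this principle rather than writing out the hexagon componentwise.
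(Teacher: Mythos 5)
Your argument is correct and matches the paper's (implicit) reasoning: the paper declares the proof ``immediate,'' and the content it is eliding is exactly the basis-determines-linear-map principle together with the bookkeeping that linear extension commutes with $\times\mapsto\otimes$ and with composition, which you identify and justify. Nothing further is needed.
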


In what follows, a set-theoretic solution is simply called a solution.

\begin{lem}\label{Sec3LemSHYBE}
A triple $(X,r,\alpha)$  with $r:X\times X\rightarrow X\times X$ and $\alpha:X\rightarrow X$ is a solution to HYBE
if and only if the following conditions hold for all $x,y,z\in X$,
\begin{enumerate}
\item\label{Sec3eqHybe1} $\alpha\lambda_{x}=\lambda_{\alpha(x)}\alpha,\text{and}~\alpha\rho_{x}=\rho_{\alpha(x)}\alpha$;
\item\label{Sec3eqHybe2} $\alpha\lambda_{\alpha(x)}\lambda_{y}=\lambda_{\alpha\lambda_{x}(y)}\lambda_{\rho_{y}(x)}\alpha$;
\item\label{Sec3eqHybe3} $\rho_{\lambda_{\rho_{y}(x)}\alpha(z)}\alpha\lambda_{x}(y)=\lambda_{\rho_{\lambda_{y}(z)}\alpha(x)}\alpha\rho_{z}(y)$;
\item\label{Sec3eqHybe4} $\alpha\rho_{\alpha(y)}\rho_{x}=\rho_{\alpha\rho_{y}(x)}\rho_{\lambda_{x}(y)}\alpha$.
\end{enumerate}
\end{lem}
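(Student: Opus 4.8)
The plan is to peel the definition apart into its two clauses and treat them independently. The first clause of the definition of a solution to HYBE, namely $r(\alpha\times\alpha)=(\alpha\times\alpha)r$, is by definition the assertion that $(X,r,\alpha)$ is a Hom-quadratic set. Hence Corollary~\ref{corHQS} applies verbatim and shows this clause is equivalent to $\alpha\lambda_{x}=\lambda_{\alpha(x)}\alpha$ and $\alpha\rho_{x}=\rho_{\alpha(x)}\alpha$ for all $x$, which is exactly condition~(1). So the first half of the equivalence costs nothing beyond citing Corollary~\ref{corHQS}.

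For the hexagon identity in the second clause I would evaluate both composite maps on an arbitrary triple $(x,y,z)\in X\times X\times X$, applying the three factors one at a time and using $r(a,b)=(\lambda_{a}(b),\rho_{b}(a))$ at each step. Tracking the three successive applications, the left-hand side $(\alpha\times r)(r\times\alpha)(\alpha\times r)$ sends $(x,y,z)$ to the triple with coordinates $\alpha\lambda_{\alpha(x)}\lambda_{y}(z)$, then $\lambda_{\rho_{\lambda_{y}(z)}\alpha(x)}\alpha\rho_{z}(y)$, then $\rho_{\alpha\rho_{z}(y)}\rho_{\lambda_{y}(z)}\alpha(x)$, while the right-hand side $(r\times\alpha)(\alpha\times r)(r\times\alpha)$ sends $(x,y,z)$ to the triple with coordinates $\lambda_{\alpha\lambda_{x}(y)}\lambda_{\rho_{y}(x)}\alpha(z)$, then $\rho_{\lambda_{\rho_{y}(x)}\alpha(z)}\alpha\lambda_{x}(y)$, then $\alpha\rho_{\alpha(z)}\rho_{y}(x)$. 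Since equality of two maps on $X\times X\times X$ is the same as equality in each coordinate for all $(x,y,z)$, I would then compare coordinates. The first coordinates, read as maps in $z$ with $x,y$ as parameters, give exactly condition~(2); the second coordinates give condition~(3); and the third coordinates, read as maps in $x$ and then relabelling the parameters $(y,z)\mapsto(x,y)$, give condition~(4). Each comparison is reversible, so conditions~(2)--(4) together are equivalent to the hexagon identity, and combining with the previous paragraph yields the full statement.

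The structure is a direct analogue of the derivation of the three equations in Theorem~\ref{YBEs} for the ordinary YBE; the only difference is the extra occurrences of $\alpha$ produced by the outer tensor factors $\alpha\times r$ and $r\times\alpha$. I would stress that condition~(1) is not used anywhere in expanding the hexagon, so the two halves of the argument really are independent and can be presented separately. I expect the only genuine difficulty to be bookkeeping: keeping straight which element occupies each subscript through the three nested applications, in particular disentangling the middle-coordinate expressions $\lambda_{\rho_{\lambda_{y}(z)}\alpha(x)}\alpha\rho_{z}(y)$ and $\rho_{\lambda_{\rho_{y}(x)}\alpha(z)}\alpha\lambda_{x}(y)$ and matching them correctly across the two sides. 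This is purely mechanical rather than conceptual, so once the expansion is carried out carefully the equivalence reads off immediately.
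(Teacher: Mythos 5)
Your proposal is correct and is exactly the direct computation the paper has in mind (its proof is simply ``It is straightforward''): Corollary~\ref{corHQS} handles the commutation clause, and the coordinatewise expansion of the hexagon identity you describe yields conditions (2)--(4) precisely as stated. All three coordinate expressions check out, so nothing is missing.
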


\begin{proof}
It is straightforward.
\end{proof}
Theorem~\ref{YBEs} is a special case of Lemma~\ref{Sec3LemSHYBE}.

\begin{ex}
A triple $(X,\id_{X\times X},\alpha)$ is a solution to
HYBE if and only if $\alpha^{2}=\alpha$. In this case, it is involutive,
but neither left nor right non-degenerate.
\end{ex}

\begin{ex}
Let $(X,r,\alpha)$ be Hom-quadratic
set. If $(X,r)$ is a solution to YBE, then $(X,(\alpha\times\alpha)r,\alpha)$
is a solution to HYBE, and the converse holds if additionally $\alpha$
is injective or surjective.
\end{ex}

\begin{ex}
A triple $(X,\tau,\alpha)$ with $\tau(x,y)=(y,x)$ and
arbitrary map $\alpha:X\to X$ is a non-degenerate involutive solution to HYBE, called a trivial solution.
\end{ex}

\begin{ex}\label{exPermutation}
A triple $(X,r,\alpha)$ with $r(x,y)=(f(y),g(x))$, where
$f,g$ are maps from $X$ to itself, is a solution to HYBE if and only if $\alpha ,f,g$ commute.  Furthermore, the solution $(X,r,\alpha)$ is   left non-degenerate and involutive  if and only if $f$ is bijective, and $g=f^{-1}$.
 \end{ex}
We are now in a position to  characterize left non-degenerate
involutive solutions to HYBE.
\begin{thm}\label{Sec3ThmLNDIeqV}
A triple $(X,r,\alpha)$ with $r:X\times X\rightarrow X\times X$ and $\alpha:X\rightarrow X$  is a left non-degenerate
involutive solution to HYBE if and only if the following conditions
hold for all $x,y\in X$,
\begin{enumerate}
\item\label{eqlnishybe1} $\lambda_{x}$ is bijective;
\item\label{eqlnishybe3} $\rho_{y}(x)=\lambda_{\lambda_{x}(y)}^{-1}(x)$;
\item\label{eqlnishybe2} $\alpha\lambda_{x}=\lambda_{\alpha(x)}\alpha$;
\item\label{eqlnishybe4} $\alpha\lambda_{\alpha(x)}\lambda_{\lambda_{x}^{-1}(y)}=\lambda_{\alpha(y)}\lambda_{\lambda_{y}^{-1}(x)}\alpha$;
\item\label{eqlnishybe5} $\alpha\lambda_{x}\lambda_{\lambda_{\alpha(x)}^{-1}(y)}=\lambda_{\alpha(y)}\lambda_{\lambda_{y}^{-1}\alpha(x)}\alpha$;
\item\label{eqlnishybe6} $\alpha\lambda_{x}\lambda_{\lambda_{\alpha(x)}^{-1}\alpha(y)}=\lambda_{y}\lambda_{\lambda_{\alpha(y)}^{-1}\alpha(x)}\alpha$.
\end{enumerate}
\end{thm}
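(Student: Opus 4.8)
The plan is to combine the two structural results already at hand and then reduce the resulting identities to the stated form. First I would invoke the second statement of Lemma~\ref{sec2:mark1}: a Hom-quadratic set $(X,r,\alpha)$ is left non-degenerate and involutive exactly when conditions~(1) and~(2) hold. This disposes of the ``left non-degenerate involutive'' half of the claim and, more importantly, supplies the identity $\rho_{y}(x)=\lambda_{\lambda_{x}(y)}^{-1}(x)$, which I will use throughout to eliminate every occurrence of $\rho$. Next I would apply Lemma~\ref{Sec3LemSHYBE}: granting that $(X,r,\alpha)$ is already left non-degenerate and involutive, being a solution to HYBE is equivalent to the four conditions of that lemma. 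The whole theorem then reduces to proving that, in the presence of~(1) and~(2), those four conditions are equivalent to~(3)--(6).

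The first of Lemma~\ref{Sec3LemSHYBE}'s conditions, namely $\alpha\lambda_{x}=\lambda_{\alpha(x)}\alpha$ together with $\alpha\rho_{x}=\rho_{\alpha(x)}\alpha$, contains~(3) as its first half. For the reverse implication I would show that, granting~(1),~(2) and~(3), the commutation $\alpha\rho_{x}=\rho_{\alpha(x)}\alpha$ is automatic: from $\alpha\lambda_{x}=\lambda_{\alpha(x)}\alpha$ and the bijectivity of $\lambda_{x}$ one gets $\lambda_{\alpha(x)}^{-1}\alpha=\alpha\lambda_{x}^{-1}$, and inserting $\rho_{y}(x)=\lambda_{\lambda_{x}(y)}^{-1}(x)$ on both sides collapses the desired identity to a tautology. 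For the second of Lemma~\ref{Sec3LemSHYBE}'s conditions I would substitute $\rho_{y}(x)=\lambda_{\lambda_{x}(y)}^{-1}(x)$ and then reindex by the change of variable $y\mapsto\lambda_{x}^{-1}(y)$ (legitimate by~(1), with $x$ held fixed); this turns it verbatim into~(4).

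The remaining two conditions, the self-dual middle relation and the $\rho$-analogue of the second, I would treat in the same spirit but with heavier bookkeeping: replace each factor $\rho_{a}(b)$ by $\lambda_{\lambda_{b}(a)}^{-1}(b)$, then use~(3) in the forms $\alpha\lambda_{x}=\lambda_{\alpha(x)}\alpha$ and $\lambda_{\alpha(x)}^{-1}\alpha=\alpha\lambda_{x}^{-1}$ to thread every $\alpha$ to the outside, and finally reindex using the bijectivity of the maps $\lambda_{x}$. I expect the $\rho$-analogue of the second condition to collapse to the symmetric identity~(6) and the middle condition to collapse to the asymmetric identity~(5). A useful consistency check along the way is that when $\alpha=\id$ all of~(4),~(5),~(6) reduce to the single cycle-set identity of Theorem~\ref{LNDISYBE}, which matches the fact that the three relations of Theorem~\ref{YBEs} coincide for a left non-degenerate involutive solution.

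The main obstacle is the middle condition $\rho_{\lambda_{\rho_{y}(x)}\alpha(z)}\alpha\lambda_{x}(y)=\lambda_{\rho_{\lambda_{y}(z)}\alpha(x)}\alpha\rho_{z}(y)$: it involves three variables in a genuinely mixed, nested pattern of $\lambda$ and $\rho$, so the $\rho$-elimination must be carried out at two different depths and the $\alpha$'s pushed past several subscripts before $z$ can be recognised as the free input of the map identity~(5). The difficulty here is organisational rather than conceptual, and it is precisely getting the indices to line up that pins down the exact placement of $\alpha$ in~(5) as opposed to~(4) or~(6). Once the four equivalences are established, assembling them with the two structural lemmas yields both directions of the theorem.
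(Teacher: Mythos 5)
Your overall strategy coincides with the paper's: both routes reduce the statement, via Lemma~\ref{sec2:mark1}\eqref{lndequiv} and Lemma~\ref{Sec3LemSHYBE}, to showing that in the presence of (1) and (2) the four conditions of Lemma~\ref{Sec3LemSHYBE} are equivalent to (3)--(6). Your handling of Lemma~\ref{Sec3LemSHYBE}\eqref{Sec3eqHybe1} (the $\rho$-commutation is automatic from the $\lambda$-commutation, (1) and (2)) and of the equivalence Lemma~\ref{Sec3LemSHYBE}\eqref{Sec3eqHybe2}$\Leftrightarrow$(4) (substitute \eqref{eqInvolutive}, reindex $y\mapsto\lambda_{x}^{-1}(y)$) is exactly what the paper does.

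The gap is in your plan for the last two conditions. You propose to collapse Lemma~\ref{Sec3LemSHYBE}\eqref{Sec3eqHybe3} to (5) and Lemma~\ref{Sec3LemSHYBE}\eqref{Sec3eqHybe4} to (6) using only $\rho$-elimination, condition (3) in its two forms, and reindexing; that toolkit is insufficient, and the pairwise correspondences you assert do not hold in isolation. After eliminating $\rho$ from Lemma~\ref{Sec3LemSHYBE}\eqref{Sec3eqHybe3}, the outer subscript on the left is the composite $\lambda_{\alpha\lambda_{x}(y)}\lambda_{\rho_{y}(x)}\alpha(z)$, which condition (3) cannot simplify: one must invoke Lemma~\ref{Sec3LemSHYBE}\eqref{Sec3eqHybe2} (equivalently the already-established condition (4)) to rewrite it as $\alpha\lambda_{\alpha(x)}\lambda_{y}(z)$ before $z$ can be recognized as the free input. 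So the reductions are sequential and interdependent, not independent. Moreover, the passage between Lemma~\ref{Sec3LemSHYBE}\eqref{Sec3eqHybe4} and (6) requires the auxiliary identity $\alpha\lambda_{x}=\alpha\lambda_{\alpha^{2}(x)}$ (equation \eqref{ThmEqdeRive1} in the paper), which does not arise from $\rho$-elimination at all; it is obtained by replacing $x$ by $\alpha(x)$ in (4), comparing with (5), and cancelling via (1). Without it neither direction of that last correspondence closes, and the converse direction additionally needs the specialization \eqref{ThmEqdeRive5-2} of Lemma~\ref{Sec3LemSHYBE}\eqref{Sec3eqHybe2}. Your plan correctly locates the hard spot, but calling it ``organisational rather than conceptual'' undersells the two missing ingredients: the reuse of condition (4) to collapse nested subscripts, and the derived relation $\alpha\lambda_{x}=\alpha\lambda_{\alpha^{2}(x)}$.
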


\begin{proof}
($\Rightarrow$)~\eqref{eqlnishybe1} and~\eqref{eqlnishybe3} follows
from Lemma~\ref{sec2:mark1}\eqref{lndequiv}.~\eqref{eqlnishybe2}
follows from Lemma~\ref{Sec3LemSHYBE}\eqref{Sec3eqHybe1}.

To prove~\eqref{eqlnishybe4}, replacing $y$ by $\lambda_{x}^{-1}(y)$
in~\eqref{eqlnishybe3} and Lemma~\ref{Sec3LemSHYBE}\eqref{Sec3eqHybe2},
we get
\[
\rho_{\lambda_{x}^{-1}(y)}(x)=\lambda_{y}^{-1}(x)~~\text{and}~~\alpha\lambda_{\alpha(x)}\lambda_{\lambda_{x}^{-1}(y)}=\lambda_{\alpha(y)}\lambda_{\rho_{\lambda_{x}^{-1}(y)}(x)}\alpha.
\]
Then~\eqref{eqlnishybe4} follows.

To prove~\eqref{eqlnishybe5}, using~\eqref{eqlnishybe3} we can write
Lemma~\ref{Sec3LemSHYBE}\eqref{Sec3eqHybe3} as
\[
\lambda_{\lambda_{\alpha\lambda_{x}(y)}\lambda_{\rho_{y}(x)}\alpha(z)}^{-1}\alpha\lambda_{x}(y)=\lambda_{\rho_{\lambda_{y}(z)}\alpha(x)}\alpha\lambda_{\lambda_{y}(z)}^{-1}(y).
\]
It follows by Lemma~\ref{Sec3LemSHYBE}\eqref{Sec3eqHybe2} that
\begin{equation*}
\lambda_{\alpha\lambda_{\alpha(x)}\lambda_{y}(z)}^{-1}\alpha\lambda_{x}(y)=\lambda_{\rho_{\lambda_{y}(z)}\alpha(x)}\alpha\lambda_{\lambda_{y}(z)}^{-1}(y),
\end{equation*}
which implies that $\alpha\lambda_{x}(y)=\lambda_{\alpha\lambda_{\alpha(x)}\lambda_{y}(z)}\lambda_{\rho_{\lambda_{y}(z)}\alpha(x)}\alpha\lambda_{\lambda_{y}(z)}^{-1}(y)$.
Since $\lambda_{y}$ is bijective and $z$ is arbitrary, we can replace
$\lambda_{y}(z)$ by $z$ in the last equation to obtain $\alpha\lambda_{x}(y)=\lambda_{\alpha\lambda_{\alpha(x)}(z)}\lambda_{\rho_{z}\alpha(x)}\alpha\lambda_{z}^{-1}(y)$.
Thus \[\alpha\lambda_{x}=\lambda_{\alpha\lambda_{\alpha(x)}(z)}\lambda_{\rho_{z}\alpha(x)}\alpha\lambda_{z}^{-1},\]
and so $\alpha\lambda_{x}\lambda_{z}=\lambda_{\alpha\lambda_{\alpha(x)}(z)}\lambda_{\rho_{z}\alpha(x)}\alpha$.
By~\eqref{eqlnishybe3} we have \[\alpha\lambda_{x}\lambda_{z}=\lambda_{\alpha\lambda_{\alpha(x)}(z)}\lambda_{\lambda_{\lambda_{\alpha(x)}(z)}^{-1}\alpha(x)}\alpha,\]
in which replacing $z$ by $\lambda_{\alpha(x)}^{-1}(y)$ gives~\eqref{eqlnishybe5}.

Now we prove~\eqref{eqlnishybe6}. We first claim that the conditions~\eqref{eqlnishybe1} through~\eqref{eqlnishybe5} imply that
\begin{equation}
\alpha\lambda_{x}=\alpha\lambda_{\alpha^{2}(x)}.\label{ThmEqdeRive1}
\end{equation}
Indeed, replacing $x$ by $\alpha(x)$ in~\eqref{eqlnishybe4}, we
have
\[
\alpha\lambda_{\alpha^{2}(x)}\lambda_{\lambda_{\alpha(x)}^{-1}(y)}=\lambda_{\alpha(y)}\lambda_{\lambda_{y}^{-1}\alpha(x)}\alpha,
\]
which together with~\eqref{eqlnishybe5} gives $\alpha\lambda_{x}\lambda_{\lambda_{\alpha(x)}^{-1}(y)}=\alpha\lambda_{\alpha^{2}(x)}\lambda_{\lambda_{\alpha(x)}^{-1}(y)}$.
By~\eqref{eqlnishybe1} we get $\alpha\lambda_{x}=\alpha\lambda_{\alpha^{2}(x)}$.

Note that Lemma~\ref{Sec3LemSHYBE}\eqref{Sec3eqHybe2} implies that
\begin{equation}
\lambda_{\rho_{x}(z)}\alpha=\lambda_{\alpha\lambda_{z}(x)}^{-1}\alpha\lambda_{\alpha(z)}\lambda_{x}.\label{ThmEqTudao2}
\end{equation}
By~\eqref{eqlnishybe3}, we can write~\ref{Sec3LemSHYBE}\eqref{Sec3eqHybe4}
as
\begin{align*}
\alpha\lambda_{\lambda_{\rho_{x}(z)}\alpha(y)}^{-1}\rho_{x}(z)=\lambda_{\lambda_{\rho_{\lambda_{x}(y)}\alpha(z)}\alpha\rho_{y}(x)}^{-1}\rho_{\lambda_{x}(y)}\alpha(z),
\end{align*}
for any $z\in X$. It follows by~\eqref{eqlnishybe3} that
\begin{align*}
\alpha\lambda_{\lambda_{\rho_{x}(z)}\alpha(y)}^{-1}\lambda_{\lambda_{z}(x)}^{-1}(z)=\lambda_{\lambda_{\rho_{\lambda_{x}(y)}\alpha(z)}\alpha\lambda_{\lambda_{x}(y)}^{-1}(x)}^{-1}\lambda_{\lambda_{\alpha(z)}\lambda_{x}(y)}^{-1}\alpha(z).
\end{align*}
By using~\eqref{ThmEqTudao2} and substituting $\lambda_{\rho_{x}(z)}\alpha$
and $\lambda_{\rho_{\lambda_{x}(y)}\alpha(z)}\alpha$ into the last equation,
we obtain
\[
\alpha\lambda_{\lambda_{\alpha\lambda_{z}(x)}^{-1}\alpha\lambda_{\alpha(z)}\lambda_{x}(y)}^{-1}\lambda_{\lambda_{z}(x)}^{-1}(z)=\lambda_{\lambda_{\alpha\lambda_{\alpha(z)}\lambda_{x}(y)}^{-1}\alpha\lambda_{\alpha^{2}(z)}(x)}^{-1}\lambda_{\lambda_{\alpha(z)}\lambda_{x}(y)}^{-1}\alpha(z).
\]
Thus by~\eqref{ThmEqdeRive1}, we have
\[
\alpha\lambda_{\lambda_{\alpha\lambda_{z}(x)}^{-1}\alpha\lambda_{\alpha(z)}\lambda_{x}(y)}^{-1}\lambda_{\lambda_{z}(x)}^{-1}(z)=\lambda_{\lambda_{\alpha\lambda_{\alpha(z)}\lambda_{x}(y)}^{-1}\alpha\lambda_{z}(x)}^{-1}\lambda_{\lambda_{\alpha(z)}\lambda_{x}(y)}^{-1}\alpha(z).
\]
Replacing $x$ by $\lambda_{z}^{-1}(x)$ in the previous equation,
we have
\[
\alpha\lambda_{\lambda_{\alpha(x)}^{-1}\alpha\lambda_{\alpha(z)}\lambda_{\lambda_{z}^{-1}(x)}(y)}^{-1}\lambda_{x}^{-1}(z)=\lambda_{\lambda_{\alpha\lambda_{\alpha(z)}\lambda_{\lambda_{z}^{-1}(x)}(y)}^{-1}\alpha(x)}^{-1}\lambda_{\lambda_{\alpha(z)}\lambda_{\lambda_{z}^{-1}(x)}(y)}^{-1}\alpha(z).
\]
Noting that $\lambda_{\alpha(z)}\lambda_{\lambda_{z}^{-1}(x)}^{-1}(y)$
is an arbitrary element of $X$, we may simply denote it by $y$.
Then the last equation can be written as
\[
\alpha\lambda_{\lambda_{\alpha(x)}^{-1}\alpha(y)}^{-1}\lambda_{x}^{-1}(z)=\lambda_{\lambda_{\alpha(y)}^{-1}\alpha(x)}^{-1}\lambda_{y}^{-1}\alpha(z),
\]
that is, $\alpha\lambda_{\lambda_{\alpha(x)}^{-1}\alpha(y)}^{-1}\lambda_{x}^{-1}=\lambda_{\lambda_{\alpha(y)}^{-1}\alpha(x)}^{-1}\lambda_{y}^{-1}\alpha$,
which implies
\[
\alpha\lambda_{x}\lambda_{\lambda_{\alpha(x)}^{-1}\alpha(y)}=\lambda_{y}\lambda_{\lambda_{\alpha(y)}^{-1}\alpha(x)}\alpha.
\]
This proves~\eqref{eqlnishybe6}.

($\Leftarrow$) The nondegeneracy and involutivity of $(X,r,\alpha)$
follows from~\eqref{eqlnishybe1} and~\eqref{eqlnishybe3} by Lemma~\ref{sec2:mark1}\eqref{lndequiv}. We now prove that $(X,r,\alpha)$ satisfies the four conditions in Lemma~\ref{Sec3LemSHYBE}.

Lemma~\ref{Sec3LemSHYBE}\eqref{Sec3eqHybe1} follows from Lemma~\ref{lemmorph}.

By~\eqref{eqlnishybe3} and~\eqref{eqlnishybe4}, we have
\[
\lambda_{\alpha\lambda_{x}(y)}\lambda_{\rho_{y}(x)}\alpha=\lambda_{\alpha\lambda_{x}(y)}\lambda_{\lambda_{\lambda_{x}(y)}^{-1}(x)}\alpha=\alpha\lambda_{\alpha(x)}\lambda_{\lambda_{x}^{-1}\lambda_{x}(y)}=\alpha\lambda_{\alpha(x)}\lambda_{y},
\]
which proves Lemma~\ref{Sec3LemSHYBE}\eqref{Sec3eqHybe2}.

To prove Lemma~\ref{Sec3LemSHYBE}\eqref{Sec3eqHybe3}, replacing
$x$ by $\alpha(x)$ and $y$ by $\lambda_{y}(z)$ in Lemma~\ref{Sec3LemSHYBE}\eqref{Sec3eqHybe2} and using~\eqref{ThmEqdeRive1} we get
\[
\lambda_{\alpha\lambda_{\alpha(x)}\lambda_{y}(z)}\lambda_{\rho_{\lambda_{y}(z)}\alpha(x)}\alpha=\alpha\lambda_{\alpha^{2}(x)}\lambda_{\lambda_{y}(z)}=\alpha\lambda_{x}\lambda_{\lambda_{y}(z)}.
\]
It follows that
\begin{equation}\label{ThmEqdeRive5-2}
\lambda_{\alpha\lambda_{\alpha(x)}\lambda_{y}(z)}^{-1}\alpha\lambda_{x}=\lambda_{\rho_{\lambda_{y}(z)}\alpha(x)}\alpha\lambda_{\lambda_{y}(z)}^{-1}.
\end{equation}
Thus  by Lemma~\ref{Sec3LemSHYBE}\eqref{Sec3eqHybe2},~\eqref{ThmEqdeRive5-2} and ~\eqref{eqlnishybe3}, we have
\begin{align*}
\rho_{\lambda_{\rho_{y}(x)}\alpha(z)}\alpha\lambda_{x}(y) & =\lambda_{\lambda_{\alpha\lambda_{x}(y)}\lambda_{\rho_{y}(x)}\alpha(z)}^{-1}\alpha\lambda_{x}(y)\\
 & =\lambda_{\alpha\lambda_{\alpha(x)}\lambda_{y}(z)}^{-1}\alpha\lambda_{x}(y)\\
 & =\lambda_{\rho_{\lambda_{y}(z)}\alpha(x)}\alpha\lambda_{\lambda_{y}(z)}^{-1}(y)\\
 & =\lambda_{\rho_{\lambda_{y}(z)}(\alpha(x))}\alpha\rho_{z}(y),
\end{align*}
which proves Lemma~\ref{Sec3LemSHYBE}\eqref{Sec3eqHybe3}.

We now prove Lemma~\ref{Sec3LemSHYBE}\eqref{Sec3eqHybe4}. By Lemma~\ref{Sec3LemSHYBE}\eqref{Sec3eqHybe2},
we have
\begin{align}\label{ThmEqdeRive5-1}
\lambda_{\rho_{y}(x)}\alpha & =\lambda_{\alpha\lambda_{x}(y)}^{-1}\alpha\lambda_{\alpha(x)}\lambda_{y}.
\end{align}
And by~\eqref{ThmEqdeRive5-2}, we have
\begin{align}\label{ThmEqdeRive5-3}
\lambda_{\rho_{\lambda_{y}(z)}\alpha(x)}\alpha & =\lambda_{\alpha\lambda_{\alpha(x)}\lambda_{y}(z)}^{-1}\alpha\lambda_{x}\lambda_{\lambda_{y}(z)}.
\end{align}
Then using~\eqref{eqlnishybe3} and~\eqref{ThmEqdeRive5-1} we obtain
that
\begin{align}
\alpha\rho_{\alpha(z)}\rho_{y}(x) & =\alpha\lambda_{\lambda_{\rho_{y}(x)}\alpha(z)}^{-1}\rho_{y}(x)\nonumber \\
 & =\alpha\lambda_{\lambda_{\alpha\lambda_{x}(y)}^{-1}\alpha\lambda_{\alpha(x)}\lambda_{y}(z)}^{-1}\lambda_{\lambda_{x}(y)}^{-1}(x).\label{eq}
\end{align}
By~\eqref{eqlnishybe3} and~\eqref{ThmEqdeRive5-3} we get
\begin{align}
\rho_{\alpha\rho_{z}(y)}\rho_{\lambda_{y}(z)}\alpha(x) & =\lambda_{\lambda_{\rho_{\lambda_{y}(z)}\alpha(x)}\alpha\lambda_{\lambda_{y}(z)}^{-1}(y)}^{-1}\rho_{\lambda_{y}(z)}\alpha(x)\nonumber \\
 & =\lambda_{\lambda_{\alpha\lambda_{\alpha(x)}\lambda_{y}(z)}^{-1}\alpha\lambda_{x}\lambda_{\lambda_{y}(z)}\lambda_{\lambda_{y}(z)}^{-1}(y)}^{-1}\rho_{\lambda_{y}(z)}\alpha(x)\nonumber\\
 & =\lambda_{\lambda_{\alpha\lambda_{\alpha(x)}\lambda_{y}(z)}^{-1}\alpha\lambda_{x}(y)}^{-1}\lambda_{\lambda_{\alpha(x)}\lambda_{y}(z)}^{-1}\alpha(x).\label{eq2}
\end{align}
Substituting $y$ for $\alpha(y)$ in~\eqref{eqlnishybe5}, we have
$\alpha\lambda_{x}\lambda_{\lambda_{\alpha(x)}^{-1}\alpha(y)}=\lambda_{\alpha^{2}(y)}\lambda_{\lambda_{\alpha(y)}^{-1}\alpha(x)}\alpha$,
which together with~\eqref{eqlnishybe6} yields
\begin{equation}
\lambda_{\alpha^{2}(y)}\lambda_{\lambda_{\alpha(y)}^{-1}\alpha(x)}\alpha=\lambda_{y}\lambda_{\lambda_{\alpha(y)}^{-1}\alpha(x)}\alpha.\label{ThmEqdeRive3}
\end{equation}
Replacing $x$ by $\alpha\lambda_{x}(y)$ and $y$ by $\alpha\lambda_{\alpha(x)}\lambda_{y}(z)$ in~\eqref{eqlnishybe4}, respectively,  we have
\[
\alpha\lambda_{\alpha^{2}\lambda_{x}(y)}\lambda_{\lambda_{\alpha\lambda_{x}(y)}^{-1}\alpha\lambda_{\alpha(x)}\lambda_{y}(z)}=\lambda_{\alpha^{2}\lambda_{\alpha(x)}\lambda_{y}(z)}\lambda_{\lambda_{\alpha\lambda_{\alpha(x)}\lambda_{y}(z)}^{-1}\alpha\lambda_{x}(y)}\alpha.
\]
By~\eqref{ThmEqdeRive1} and~\eqref{ThmEqdeRive3}, we get that
\[
\alpha\lambda_{\lambda_{x}(y)}\lambda_{\lambda_{\alpha\lambda_{x}(y)}^{-1}\alpha\lambda_{\alpha(x)}\lambda_{y}(z)}=\lambda_{\lambda_{\alpha(x)}\lambda_{y}(z)}\lambda_{\lambda_{\alpha\lambda_{\alpha(x)}\lambda_{y}(z)}^{-1}\alpha\lambda_{x}(y)}\alpha,
\]
whence
\begin{equation}
\alpha\lambda_{\lambda_{\alpha\lambda_{x}(y)}^{-1}\alpha\lambda_{\alpha(x)}\lambda_{y}(z)}^{-1}\lambda_{\lambda_{x}(y)}^{-1}=\lambda_{\lambda_{\alpha\lambda_{\alpha(x)}\lambda_{y}(z)}^{-1}\alpha\lambda_{x}(y)}^{-1}\lambda_{\lambda_{\alpha(x)}\lambda_{y}(z)}^{-1}\alpha.\label{ThmEqdeRive4}
\end{equation}
 Lemma~\ref{Sec3LemSHYBE}\eqref{Sec3eqHybe4} follows from~\eqref{eq},~\eqref{eq2} and~\eqref{ThmEqdeRive4}.
\end{proof}

\begin{cor}\label{corxa2}
Let $(X,r,\alpha)$ be a  left non-degenerate involutive solution to HYBE. Then
\begin{enumerate}
\item\label{eqcor1} $\lambda_{x}\alpha=\lambda_{\alpha^{2}(x)}\alpha$ for all $x\in X$;
\item  $\lambda_{x}\alpha^2=\alpha^2\lambda_{x}$  for all $x\in X$.
 \end{enumerate}
 \end{cor}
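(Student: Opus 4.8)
The plan is to establish (1) first and then deduce (2) from (1) together with the compatibility $\alpha\lambda_{x}=\lambda_{\alpha(x)}\alpha$ recorded in Theorem~\ref{Sec3ThmLNDIeqV}\eqref{eqlnishybe2}. Since $(X,r,\alpha)$ is a left non-degenerate involutive solution to HYBE, Theorem~\ref{Sec3ThmLNDIeqV} guarantees that all of conditions \eqref{eqlnishybe1}--\eqref{eqlnishybe6} hold, so I may freely use \eqref{eqlnishybe1}, \eqref{eqlnishybe2}, and the auxiliary identities \eqref{ThmEqdeRive1} and \eqref{ThmEqdeRive3} obtained in the proof of that theorem.

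For (1), it is tempting to combine \eqref{ThmEqdeRive1} with \eqref{eqlnishybe2}: writing $\alpha\lambda_{x}=\lambda_{\alpha(x)}\alpha$ and $\alpha\lambda_{\alpha^{2}(x)}=\lambda_{\alpha^{3}(x)}\alpha$, identity \eqref{ThmEqdeRive1} yields $\lambda_{\alpha(x)}\alpha=\lambda_{\alpha^{3}(x)}\alpha$. This is the asserted identity, but only for arguments lying in the image $\alpha(X)$; promoting it to an arbitrary $x\in X$ is the real content of (1) and the main obstacle. To overcome it I would appeal to \eqref{ThmEqdeRive3}. First, \eqref{eqlnishybe2} gives $\lambda_{\alpha(y)}^{-1}\alpha=\alpha\lambda_{y}^{-1}$, so the subscript $\lambda_{\alpha(y)}^{-1}\alpha(x)$ appearing in \eqref{ThmEqdeRive3} equals $\alpha\lambda_{y}^{-1}(x)$, an element of $\alpha(X)$. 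Making the admissible substitution $x=\lambda_{y}(s)$ turns this subscript into $\alpha(s)$, so \eqref{ThmEqdeRive3} reads $\lambda_{\alpha^{2}(y)}\lambda_{\alpha(s)}\alpha=\lambda_{y}\lambda_{\alpha(s)}\alpha$. Applying \eqref{eqlnishybe2} once more, $\lambda_{\alpha(s)}\alpha=\alpha\lambda_{s}$, whence $\lambda_{\alpha^{2}(y)}\alpha\lambda_{s}=\lambda_{y}\alpha\lambda_{s}$; cancelling the bijection $\lambda_{s}$ on the right gives $\lambda_{\alpha^{2}(y)}\alpha=\lambda_{y}\alpha$, which is precisely (1).

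For (2), I would run a short computation resting on (1). Applying \eqref{eqlnishybe2} twice gives $\alpha^{2}\lambda_{x}=\alpha\lambda_{\alpha(x)}\alpha=\lambda_{\alpha^{2}(x)}\alpha^{2}$. On the other hand, multiplying the identity in (1) on the right by $\alpha$ yields $\lambda_{x}\alpha^{2}=\lambda_{\alpha^{2}(x)}\alpha^{2}$. Comparing the two expressions gives $\lambda_{x}\alpha^{2}=\alpha^{2}\lambda_{x}$, as required.

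The only delicate point is the cancellation in (1): it is legitimate exactly because each $\lambda_{s}$ is bijective by \eqref{eqlnishybe1}, and because the substitution $x=\lambda_{y}(s)$ is available for every $s\in X$, so the conclusion is not restricted to $\alpha(X)$. Everything else is routine bookkeeping with \eqref{eqlnishybe2}.
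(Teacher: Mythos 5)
Your argument is correct and is essentially the paper's own proof: the paper likewise obtains part (1) by combining condition \eqref{eqlnishybe5} (with $y$ replaced by $\alpha(y)$) and \eqref{eqlnishybe6} to get exactly the identity \eqref{ThmEqdeRive3} you cite, then rewrites the subscript via \eqref{eqlnishybe2} and cancels a bijective $\lambda$ on the right, and part (2) is the same two-line computation from (1) and \eqref{eqlnishybe2}. Your remark that \eqref{ThmEqdeRive1} alone only yields the identity on $\alpha(X)$ correctly identifies why the extra step is needed.
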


\begin{proof}
(1)  Replacing $y$ by $\alpha(y)$ in Theorem~\ref{Sec3ThmLNDIeqV}\eqref{eqlnishybe5}
 we have \[\alpha\lambda_{x}\lambda_{\lambda_{\alpha(x)}^{-1}\alpha(y)} =\lambda_{\alpha^{2}(y)}\lambda_{\lambda_{\alpha(y)}^{-1}\alpha(x)}\alpha,\]
which together with Theorem~\ref{Sec3ThmLNDIeqV}\eqref{eqlnishybe6}
yields
\[
\lambda_{\alpha^{2}(y)}\lambda_{\lambda_{\alpha(y)}^{-1}\alpha(x)}\alpha=\lambda_{y}\lambda_{\lambda_{\alpha(y)}^{-1}\alpha(x)}\alpha.
\]
By Theorem~\ref{Sec3ThmLNDIeqV}\eqref{eqlnishybe2},  $\lambda_{\alpha^{2}(y)}\alpha\lambda_{\lambda_{y}^{-1}(x)} =\lambda_{y}\alpha\lambda_{\lambda_{y}^{-1}(x)}$.
Thus $\lambda_{\alpha^{2}(y)}\alpha\ =\lambda_{y}\alpha$.

(2)  By Theorem~\ref{Sec3ThmLNDIeqV}\eqref{eqlnishybe2} and Corollary~\ref{corxa2}\eqref{eqcor1}
we have \[\alpha\lambda_{\alpha(x)}=\lambda_{\alpha^{2}(x)}\alpha=\lambda_{x}\alpha.\]
By Theorem~\ref{Sec3ThmLNDIeqV}\eqref{eqlnishybe2},  we have $\alpha^{2}\lambda_{x}=\alpha\lambda_{\alpha(x)}\alpha=\lambda_{x}\alpha^{2}$.
\end{proof}

\begin{thm}\label{Sec3Thm2}
A triple $(X,r,\alpha)$ with $r:X\times X\rightarrow X\times X$ and $\alpha:X\rightarrow X$ is a left non-degenerate
involutive solution to HYBE if and only if the following statements
are true for all $x,y\in X$,
\begin{enumerate}
\item $\lambda_{x}\ $is bijective;
\item $\rho_{y}(x)=\lambda_{\lambda_{x}(y)}^{-1}(x)$;
\item $\alpha\lambda_{x}=\lambda_{\alpha(x)}\alpha$;
\item $\lambda_{x}\alpha=\alpha\lambda_{\alpha(x)}$;
\item $\alpha\lambda_{x}\lambda_{\lambda_{\alpha(x)}^{-1}(y)}=\lambda_{\alpha(y)}\lambda_{\lambda_{y}^{-1}\alpha(x)}\alpha$.
\end{enumerate}
\end{thm}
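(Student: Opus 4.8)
The plan is to deduce this from Theorem~\ref{Sec3ThmLNDIeqV}, which already characterizes left non-degenerate involutive solutions to HYBE by conditions \eqref{eqlnishybe1}--\eqref{eqlnishybe6}. Conditions (1),(2),(3) of the present theorem are literally \eqref{eqlnishybe1}, \eqref{eqlnishybe3}, \eqref{eqlnishybe2}, and condition (5) is literally \eqref{eqlnishybe5}; so, working in the presence of these common conditions, it suffices to prove that condition (4), namely $\lambda_x\alpha=\alpha\lambda_{\alpha(x)}$, is equivalent to the conjunction of \eqref{eqlnishybe4} and \eqref{eqlnishybe6}.

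For the implication ``Theorem~\ref{Sec3ThmLNDIeqV} $\Rightarrow$ present theorem'' there is nothing to do beyond citing Corollary~\ref{corxa2}: the identity $\lambda_x\alpha=\lambda_{\alpha^2(x)}\alpha=\alpha\lambda_{\alpha(x)}$ established in its proof is exactly condition (4).

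The substance is the converse. Assuming (1)--(5), I would first record two commutation identities forced by the hypotheses: from (1) and (3) one gets $\alpha\lambda_x^{-1}=\lambda_{\alpha(x)}^{-1}\alpha$ (cancel the bijective $\lambda_x,\lambda_{\alpha(x)}$), and from (3) and (4) one gets $\lambda_x\alpha=\lambda_{\alpha^2(x)}\alpha$, the analogue of Corollary~\ref{corxa2}\eqref{eqcor1} now obtained directly from the hypotheses. Condition \eqref{eqlnishybe6} then drops out of \eqref{eqlnishybe5}: replacing $y$ by $\alpha(y)$ in \eqref{eqlnishybe5} reproduces the left-hand side of \eqref{eqlnishybe6}, and the spurious $\alpha^2$ on the right is removed by the operator identity $\lambda_{\alpha^2(y)}\lambda_a\alpha=\lambda_y\lambda_a\alpha$, valid for every $a$ and checked by pushing $\alpha$ rightward through $\lambda_a$ with (4) and then using $\lambda_{\alpha^2(y)}\alpha=\lambda_y\alpha$.

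The hard part will be \eqref{eqlnishybe4}, because \eqref{eqlnishybe5} by itself only yields \eqref{eqlnishybe4} after the substitution $x\mapsto\alpha(x)$, i.e.\ only for arguments in the image of $\alpha$ (the inverses $\lambda_{\alpha^2(x)}^{-1}$ and $\lambda_x^{-1}$ need not agree off that image). My way around this is to conjugate \eqref{eqlnishybe4} into a symmetric shape using (4) and the commutation identities: both sides can be rewritten so that \eqref{eqlnishybe4} becomes $\alpha\lambda_{\alpha(x)}\lambda_{\lambda_{\alpha^2(x)}^{-1}\alpha^2(y)}=\alpha\lambda_y\lambda_{\lambda_{\alpha(y)}^{-1}\alpha(x)}$. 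Here the left-hand side is precisely \eqref{eqlnishybe6} evaluated at $(\alpha(x),\alpha(y))$, so it equals $\lambda_{\alpha(y)}\lambda_{\lambda_{\alpha^2(y)}^{-1}\alpha^2(x)}\alpha$; and the right-hand side, after moving $\alpha$ rightward through both $\lambda$-factors using (3) and the commutation identities, equals the same expression. Hence the two sides coincide and \eqref{eqlnishybe4} holds. With \eqref{eqlnishybe1}--\eqref{eqlnishybe6} all verified, Theorem~\ref{Sec3ThmLNDIeqV} closes the argument. I expect the bookkeeping of the $\alpha$-conjugations in this last step to be the most error-prone point, so I would keep the three identities $\alpha\lambda_x=\lambda_{\alpha(x)}\alpha$, $\lambda_x\alpha=\alpha\lambda_{\alpha(x)}$ and $\alpha\lambda_x^{-1}=\lambda_{\alpha(x)}^{-1}\alpha$ at hand throughout.
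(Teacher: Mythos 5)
Your proposal is correct and follows essentially the same route as the paper: both directions reduce to Theorem~\ref{Sec3ThmLNDIeqV}, the forward implication is exactly Corollary~\ref{corxa2}, and the converse derives \eqref{eqlnishybe6} from (5) with $y\mapsto\alpha(y)$ and then recovers \eqref{eqlnishybe4} by an $\alpha$-substitution combined with shuffling $\alpha$ through the $\lambda$'s via (3) and (4). The only cosmetic difference is that you extract \eqref{eqlnishybe4} by symmetrizing it and invoking \eqref{eqlnishybe6} at $(\alpha(x),\alpha(y))$, whereas the paper substitutes $x\mapsto\alpha(x)$ directly into (5); the underlying computation is the same.
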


\begin{proof}
($\Rightarrow$)  By Theorem~\ref{Sec3ThmLNDIeqV} we need only prove (4). By (3) and Corollary \ref{corxa2}, $\alpha\lambda_{\alpha(x)}=\lambda_{\alpha^2(x)}\alpha=\lambda_x\alpha$, as desired.

($\Leftarrow$) It suffices to prove~\eqref{eqlnishybe4} and~\eqref{eqlnishybe6}
of Theorem~\ref{Sec3ThmLNDIeqV}. Replacing $y$ by $\alpha(y)$
in (5) and using (3) and (4) we get
\[
\alpha\lambda_{x}\lambda_{\lambda_{\alpha(x)}^{-1}\alpha(y)}=\lambda_{\alpha^{2}(y)}\lambda_{\lambda_{\alpha(y)}^{-1}\alpha(x)}\alpha=\lambda_{y}\lambda_{\lambda_{\alpha(y)}^{-1}\alpha(x)}\alpha,
\]
which is Theorem~\ref{Sec3ThmLNDIeqV}\eqref{eqlnishybe6}.

Replacing $x$ by $\alpha(x)$ in (5) we get $\alpha\lambda_{\alpha(x)}\lambda_{\lambda_{\alpha^{2}(x)}^{-1}(y)}=\lambda_{\alpha(y)}\lambda_{\lambda_{y}^{-1}\alpha^{2}(x)}\alpha$.
It follows from (3) and (4) that
\begin{multline*}
\lambda_{\alpha(y)}\lambda_{\lambda_{y}^{-1}\alpha^{2}(x)}\alpha=\alpha\lambda_{\alpha(x)}\lambda_{\lambda_{\alpha^{2}(x)}^{-1}(y)}=\lambda_{\alpha^{2}(x)}\lambda_{\lambda_{\alpha^{3}(x)}^{-1}\alpha(y)}\alpha\\
=\lambda_{\alpha^{2}(x)}\lambda_{\lambda_{\alpha(x)}^{-1}\alpha(y)}\alpha=\alpha\lambda_{\alpha(x)}\lambda_{\lambda_{x}^{-1}(y)}.
\end{multline*}
This proves Theorem~\ref{Sec3ThmLNDIeqV}\eqref{eqlnishybe4}.
\end{proof}
When $\alpha=\id$,  Theorem~\ref{Sec3Thm2} gives Theorem~\ref{LNDISYBE}.

\begin{ex}\label{exTheta}
A triple $(X,r,\alpha)$ such that $\alpha(X)=\{\theta\}$
is a left non-degenerate involutive solution to HYBE if and only if
$\lambda_{x}$ is bijective, $\lambda_{x}(\theta)=\theta$ and $\rho_{y}(x)=\lambda_{\lambda_{x}(y)}^{-1}(x)$
for all $x,y\in X$.
\end{ex}

\section{Hom-cycle sets}
In this section, we introduce a Hom-version of cycle
sets and relate them to  left non-degenerate involutive
solutions to HYBE.
\begin{defn}
A left Hom-quasigroup $(X,\alpha)$ is called a left Hom-cycle set if the
following conditions are satisfied for all $x,y,z\in X$,
\begin{gather}
\alpha((xy)(\alpha(x)z))=(yx)(\alpha(y)\alpha(z)),\label{Sec4ProEq1}\\
\alpha((\alpha(x)y)(xz))=(y\alpha(x))(\alpha(y)\alpha(z)),\label{Sec4ProEq2}\\
\alpha((\alpha(x)\alpha(y))(xz))=(\alpha(y)\alpha(x))(y\alpha(z)).\label{Sec4ProEq3}
\end{gather}
\end{defn}

In what follows, left cycle sets and left
Hom-cycle sets are referred to  cycle sets and Hom-cycle sets.

Clearly, $X$ is a cycle set if and only if $(X,\id_{X})$ is a Hom-cycle
set. Hence we will identify the  cycle set $X$ with the Hom-cycle
set $(X,\id_{X})$.

Denote by $\mathsf{HCS}$ the category of Hom-cycle sets, which is
a full subcategory of $\mathsf{HQG}$.

Denote by $\mathsf{S_{hybe}}$ the category of left non-degenerate
involutive solution to HYBE, which is a  full subcategory of $\mathsf{HQS}$.

\begin{thm}\label{thlndihcs}
The functors $G:\mathsf{S_{hybe}}\to\mathsf{HCS}$
and $S:\mathsf{HCS}\to\mathsf{S_{hybe}}$ are mutually inverse,
and so the categories $\mathsf{S_{hybe}}$ and $\mathsf{HCS}$
are isomorphic.
\end{thm}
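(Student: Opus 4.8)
The plan is to bootstrap from the category isomorphism already in hand. By Theorem~\ref{LemlnditP} the functors $G$ and $S$ are mutually inverse isomorphisms between $\mathsf{HQS}$ and $\mathsf{HQG}$, and both $\mathsf{S_{hybe}}$ and $\mathsf{HCS}$ are \emph{full} subcategories of $\mathsf{HQS}$ and $\mathsf{HQG}$, respectively. Consequently, morphisms take care of themselves: once we know that $G$ carries objects of $\mathsf{S_{hybe}}$ to objects of $\mathsf{HCS}$ and $S$ carries objects of $\mathsf{HCS}$ to objects of $\mathsf{S_{hybe}}$, the restricted functors are automatically mutually inverse. Thus everything reduces to a single object-level biconditional: for a left non-degenerate involutive Hom-quadratic set $(X,r,\alpha)$ with associated left Hom-quasigroup $(X,\cdot,\alpha)=G(X,r,\alpha)$, where $x\cdot y=\lambda_x^{-1}(y)$, one has $(X,r,\alpha)\in\mathsf{S_{hybe}}$ if and only if $(X,\cdot,\alpha)\in\mathsf{HCS}$.

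To organize this biconditional I would invoke Theorem~\ref{Sec3ThmLNDIeqV}. Its conditions \eqref{eqlnishybe1}, \eqref{eqlnishybe3}, \eqref{eqlnishybe2} are precisely what it means for $(X,r,\alpha)$ to lie in $\mathsf{HQS}$ (left non-degeneracy and involutivity via Lemma~\ref{sec2:mark1}\eqref{lndequiv}, and the Hom-endomorphism condition via Lemma~\ref{lemmorph} and Corollary~\ref{corHQS}, the single relation $\alpha\lambda_x=\lambda_{\alpha(x)}\alpha$ sufficing in the left non-degenerate involutive case), which by Theorem~\ref{LemlnditP} is exactly the assertion that $(X,\cdot,\alpha)$ is a left Hom-quasigroup. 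Hence the real content is to show that the three remaining conditions \eqref{eqlnishybe4}, \eqref{eqlnishybe5}, \eqref{eqlnishybe6} are equivalent, under $x\cdot y=\lambda_x^{-1}(y)$, to the three defining identities \eqref{Sec4ProEq1}, \eqref{Sec4ProEq2}, \eqref{Sec4ProEq3} of a Hom-cycle set.

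The mechanism I would use is a single elementary observation: for bijections $P,Q$ of $X$ and any map $\alpha\colon X\to X$,
\[
\alpha P=Q\alpha \iff \alpha P^{-1}=Q^{-1}\alpha,
\]
which follows by left-multiplying by $Q^{-1}$ and right-multiplying by $P^{-1}$ (and reversing the steps). I would then expand each Hom-cycle identity into operator form by repeatedly substituting $uv=\lambda_u^{-1}(v)$ from the innermost product outward; for instance \eqref{Sec4ProEq1} becomes $\alpha\,\lambda_{\lambda_x^{-1}(y)}^{-1}\lambda_{\alpha(x)}^{-1}=\lambda_{\lambda_y^{-1}(x)}^{-1}\lambda_{\alpha(y)}^{-1}\alpha$, which is $\alpha P^{-1}=Q^{-1}\alpha$ for $P=\lambda_{\alpha(x)}\lambda_{\lambda_x^{-1}(y)}$ and $Q=\lambda_{\alpha(y)}\lambda_{\lambda_y^{-1}(x)}$; by the observation this is equivalent to $\alpha P=Q\alpha$, namely \eqref{eqlnishybe4}. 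The same expansion shows \eqref{Sec4ProEq2} is $\alpha P^{-1}=Q^{-1}\alpha$ with $P=\lambda_x\lambda_{\lambda_{\alpha(x)}^{-1}(y)}$ and $Q=\lambda_{\alpha(y)}\lambda_{\lambda_y^{-1}\alpha(x)}$, equivalent to \eqref{eqlnishybe5}, and \eqref{Sec4ProEq3} is $\alpha P^{-1}=Q^{-1}\alpha$ with $P=\lambda_x\lambda_{\lambda_{\alpha(x)}^{-1}\alpha(y)}$ and $Q=\lambda_y\lambda_{\lambda_{\alpha(y)}^{-1}\alpha(x)}$, equivalent to \eqref{eqlnishybe6}.

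I expect the only real friction to be bookkeeping in these expansions: correctly unravelling nested products such as $(xy)(\alpha(x)z)=\lambda_{\lambda_x^{-1}(y)}^{-1}\lambda_{\alpha(x)}^{-1}(z)$ and keeping track of where $\alpha$ lands relative to the $\lambda$'s, together with checking that the translation is genuinely reversible so that the argument applies verbatim to $S(X,\cdot,\alpha)$ for a Hom-cycle set $(X,\cdot,\alpha)$. No further structural input is needed; assembling the three equivalences with the identification of $\mathsf{HQS}$-objects and $\mathsf{HQG}$-objects yields the object-level biconditional, and hence the claimed isomorphism of categories.
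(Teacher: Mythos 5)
Your proposal is correct and follows essentially the same route as the paper: the paper's proof likewise reduces everything to Theorem~\ref{Sec3ThmLNDIeqV}, identifying conditions \eqref{eqlnishybe1}--\eqref{eqlnishybe2} with membership in $\mathsf{HQS}$ (via Lemma~\ref{sec2:mark1} and Corollary~\ref{corHQS}) and asserting that \eqref{eqlnishybe4}--\eqref{eqlnishybe6} translate into \eqref{Sec4ProEq1}--\eqref{Sec4ProEq3} under $x\cdot y=\lambda_x^{-1}(y)$, with fullness of the subcategories handling morphisms. The only difference is that you make explicit the inversion mechanism $\alpha P=Q\alpha\iff\alpha P^{-1}=Q^{-1}\alpha$ that the paper leaves implicit, and your three operator-form identifications are correct.
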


\begin{proof}
Let $(X,r,\alpha)$ be a left non-degenerate involutive solution to
HYBE. $G(X,r,\alpha)$ is a left Hom-quasigroup.
Furthermore, since $x\cdot y=\lambda_{x}^{-1}(y)$, Theorem~\ref{Sec3ThmLNDIeqV}\eqref{eqlnishybe4}--\eqref{eqlnishybe6}
imply the conditions~\eqref{Sec4ProEq1}--\eqref{Sec4ProEq3}.
Thus $G(X,r,\alpha)$ is a Hom-cycle set.

Conversely, let $(X,\alpha)$ be a Hom-cycle set and $S(X,\alpha)=(X,r,\alpha)$.
Then  $(X,r,\alpha)$ is a left non-degenerate
involutive Hom-quadratic set. By Lemma~\ref{sec2:mark1}\eqref{lndequiv}
and Corollary~\ref{corHQS}, $(X,r,\alpha)$ satisfies
Theorem~\ref{Sec3ThmLNDIeqV}\eqref{eqlnishybe1}--\eqref{eqlnishybe2}. Furthermore, the conditions~\eqref{Sec4ProEq1}--\eqref{Sec4ProEq3}
imply  Theorem~\ref{Sec3ThmLNDIeqV}\eqref{eqlnishybe4}--\eqref{eqlnishybe6}.
Thus $S(X,\alpha)$ is a left non-degenerate involutive solution to
HYBE.
\end{proof}

Theorem~\ref{thlndihcs} generalizes  \cite[Proposition 1]{Rump1}.

\begin{thm}\label{thhcs2}
A left Hom-quasigroup $(X,\alpha)$ is a Hom-cycle
set if and only if the following conditions hold  for all $x,y\in X$,
\begin{gather}
\alpha^{2}(x)\alpha(y)=x\alpha(y),\label{eqthcs2eq1-1}\\
(x\alpha(y))(\alpha(x)\alpha(z))=(y\alpha(x))(\alpha(y)\alpha(z)).\label{eqthcs2eq2}
\end{gather}
\end{thm}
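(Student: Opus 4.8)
The plan is to reduce the statement to the operator characterisation of Theorem~\ref{Sec3Thm2} through the isomorphism of Theorem~\ref{thlndihcs}. Writing $S(X,\alpha)=(X,r,\alpha)$, recall the dictionary between the two pictures: $x\cdot y=\sigma_x(y)=\lambda_x^{-1}(y)$, so that $\lambda_x=\sigma_x^{-1}$, and that $\alpha$ being an endomorphism of the left quasigroup is exactly the relation $\alpha\sigma_u=\sigma_{\alpha(u)}\alpha$ for all $u$. Since $(X,\alpha)$ is a left Hom-quasigroup, $S(X,\alpha)$ automatically fulfils conditions (1)--(3) of Theorem~\ref{Sec3Thm2}: (1) and (2) are built into the definition of $S$, and (3) is the endomorphism property. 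Hence, by Theorems~\ref{thlndihcs} and~\ref{Sec3Thm2}, $(X,\alpha)$ is a Hom-cycle set if and only if $S(X,\alpha)$ satisfies conditions (4) and (5) of Theorem~\ref{Sec3Thm2}, so it suffices to translate these two operator identities into the quasigroup language and match them with~\eqref{eqthcs2eq1-1} and~\eqref{eqthcs2eq2}.

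First I would dispose of condition (4), $\lambda_x\alpha=\alpha\lambda_{\alpha(x)}$. Writing $\lambda_u=\sigma_u^{-1}$ and composing on the left with $\sigma_x$ and on the right with $\sigma_{\alpha(x)}$ turns it into the equivalent identity $\alpha\sigma_{\alpha(x)}=\sigma_x\alpha$; evaluating at an arbitrary $y$ and using $\alpha(\alpha(x)\cdot y)=\alpha^{2}(x)\cdot\alpha(y)$ gives $\alpha^{2}(x)\alpha(y)=x\alpha(y)$, which is~\eqref{eqthcs2eq1-1}. Every step is a composition with a bijection, so condition (4) is equivalent to~\eqref{eqthcs2eq1-1}.

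The substantive step is condition (5), $\alpha\lambda_x\lambda_{\lambda_{\alpha(x)}^{-1}(y)}=\lambda_{\alpha(y)}\lambda_{\lambda_y^{-1}\alpha(x)}\alpha$. The tempting move of inverting the whole identity is unavailable because $\alpha$ need not be bijective; instead I would compose on the right with the bijection $\lambda_{\lambda_{\alpha(x)}^{-1}(y)}^{-1}\lambda_x^{-1}$ to cancel the two $\lambda$-factors on the left, leaving $\alpha$ alone on that side. On the other side one then relocates $\alpha$ to the far right by repeatedly applying $\alpha\sigma_u=\sigma_{\alpha(u)}\alpha$, rewrites the indices $\lambda_{\alpha(x)}^{-1}(y)=\alpha(x)y$ and $\lambda_y^{-1}\alpha(x)=y\alpha(x)$, and converts the surviving factors $\lambda_{\alpha(y)},\lambda_{y\alpha(x)}$ into left translations $\sigma_{\alpha(y)}^{-1},\sigma_{y\alpha(x)}^{-1}$. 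Composing on the left with $\sigma_{y\alpha(x)}\sigma_{\alpha(y)}$ and evaluating at $z$ collapses everything to the identity $(y\alpha(x))(\alpha(y)\alpha(z))=(\alpha^{2}(x)\alpha(y))(\alpha(x)\alpha(z))$, which is exactly the Hom-cycle set axiom~\eqref{Sec4ProEq2} after expanding its left-hand side by the endomorphism property. Again every step is a composition with a bijection, hence reversible, so condition (5) is equivalent to~\eqref{Sec4ProEq2}.

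Finally I would compare~\eqref{Sec4ProEq2} with~\eqref{eqthcs2eq2}. Both have right-hand side $(y\alpha(x))(\alpha(y)\alpha(z))$, and their left-hand sides differ only in the first factor, $\alpha^{2}(x)\alpha(y)$ versus $x\alpha(y)$; since~\eqref{eqthcs2eq1-1} identifies these two factors, \eqref{Sec4ProEq2} and~\eqref{eqthcs2eq2} are equivalent once~\eqref{eqthcs2eq1-1} is assumed. Combining the three translations, the pair of conditions (4) and (5) of Theorem~\ref{Sec3Thm2} is equivalent to the pair~\eqref{eqthcs2eq1-1},~\eqref{eqthcs2eq2}, which completes the proof. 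The one genuinely delicate point, and the step I would write out in full, is the manipulation of condition (5): keeping track of which operators are bijections and relocating the non-invertible $\alpha$ by the commutation rule $\alpha\sigma_u=\sigma_{\alpha(u)}\alpha$ rather than by inversion is where all the bookkeeping lives.
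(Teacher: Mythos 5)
Your proposal is correct and follows the same route as the paper: both reduce the statement, via the isomorphism of Theorem~\ref{thlndihcs}, to the operator characterisation in Theorem~\ref{Sec3Thm2}, observe that conditions (1)--(3) hold automatically for $S(X,\alpha)$, and identify condition (4) with~\eqref{eqthcs2eq1-1} and condition (5) with~\eqref{eqthcs2eq2} (the latter via~\eqref{Sec4ProEq2}). Your write-up merely makes explicit the translation steps that the paper leaves implicit, and your bookkeeping of where $\alpha$ may and may not be cancelled is accurate.
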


\begin{proof}
Suppose $S(X,\alpha)=(X,r,\alpha)$.

($\Rightarrow$) By Theorem~\ref{thlndihcs}, $(X,r,\alpha)$
is a left non-degenerate involutive solution to HYBE.
Thus~\eqref{eqthcs2eq1-1} follows from Theorem~\ref{Sec3Thm2}(4), and~\eqref{eqthcs2eq2}
follows from~\eqref{Sec4ProEq2}.

($\Leftarrow$)  By Theorem~\ref{thlndihcs}
it suffices to prove that $(X,r,\alpha)$ is a left non-degenerate
involutive solution to HYBE. We need to verify that $(X,r,\alpha)$
satisfies (1)--(5) of Theorem~\ref{Sec3Thm2}. In fact, Lemma~\ref{sec2:mark1}\eqref{lndequiv}
and Corollary~\ref{corHQS} imply that $(X,r,\alpha)$ satisfies
(1)--(3) in Theorem~\ref{Sec3Thm2}, and the conditions~\eqref{eqthcs2eq1-1}
and~\eqref{eqthcs2eq2} imply that $(X,r,\alpha)$ satisfies (4)--(5)
in Theorem~\ref{Sec3Thm2}, as desired.
\end{proof}
\begin{ex} Let $X$ be a nonempty set with a map $\alpha:X\to X$,
and let $f:X\to X$ be a bijection such that $f\alpha=\alpha f$. Define
an operation on $X$ by $xy=f(y)$ for all $x,y\in X$. Then
$(X,\alpha)$ is a Hom-cycle set, which corresponds to the left non-degenerate involutive solution to HYBE defined in Example~\ref{exPermutation}.
 \end{ex}

\begin{ex} Let $(X,\alpha)$ be a left Hom-quasigroup with  $\alpha(X)=\{\theta\}$.
Then $(X,\alpha)$ is a Hom-cycle set if and only if $\theta$ is
a right zero, i.e., $x\theta=\theta$ for all $x\in X$. In this case, $(X,\alpha)$ corresponds to the
solution defined in Example~\ref{exTheta}.\end{ex}

\begin{ex}
A trivial solution to HYBE corresponds to a right zero
groupoid with an endomorphism.
\end{ex}

\begin{ex}\label{exlinearhcs}
   Let $(X,+)$ be an Abelian group with endomorphisms $\varphi,\psi,\alpha$ and define $x\cdot y=\varphi(x)+\psi(y)$ for all  $x,y\in X$. Then $(X,\cdot,\alpha) $ is a Hom-cycle set if and only if the following hold:
   \begin{enumerate}
   \item $\psi$ is bijective;
\item $\varphi\alpha=\alpha\varphi$ and $\psi\alpha=\alpha\psi$;
\item $\varphi\alpha^2=\varphi$;
\item $\varphi^2=\varphi\psi\alpha-\psi\varphi\alpha$.
\end{enumerate}
In fact, (1) is equivalent to saying that $(X,\cdot) $ is a left quasigroup by \cite[Section 4.1]{Bonatto21}; (2) is equivalent to the assertion that $\alpha$ is an endomorphism of $(X,\cdot) $;    (3) and (4) are equivalent to \eqref{eqthcs2eq1-1} and \eqref{eqthcs2eq2}, respectively.
     \end{ex}
\begin{lem}
If $(X,\alpha)$ is a left Hom-quasigroup satisfying~\eqref{eqthcs2eq1-1}, then
\begin{equation}
(x\alpha^{2}(y))\alpha(z)=(xy)\alpha(z),\label{eqderives}
\end{equation}
for all $x,y,z\in X$.
\end{lem}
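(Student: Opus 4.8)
The plan is to derive \eqref{eqderives} by reading the hypothesis \eqref{eqthcs2eq1-1} in two directions and inserting the endomorphism property of $\alpha$ in between; no use of the bijectivity of the left multiplications is needed. The crucial observation is that, although the left quasigroup operation is not associative and so the bracketing in $(x\alpha^{2}(y))\alpha(z)$ cannot be rearranged, all the work can be done inside the left factor $x\alpha^{2}(y)$ while the right factor $\alpha(z)$ is left untouched.

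First I would rewrite the inner product $x\alpha^{2}(y)$. Since $\alpha^{2}(y)=\alpha(\alpha(y))$ lies in the image of $\alpha$, applying \eqref{eqthcs2eq1-1} with $y$ replaced by $\alpha(y)$ gives $x\alpha^{2}(y)=\alpha^{2}(x)\alpha^{2}(y)$. Next, because $\alpha$ is an endomorphism of the left quasigroup, so is $\alpha^{2}$, whence $\alpha^{2}(x)\alpha^{2}(y)=\alpha^{2}(xy)$. Substituting these two identities into the left factor of \eqref{eqderives} yields
\[
(x\alpha^{2}(y))\alpha(z)=(\alpha^{2}(x)\alpha^{2}(y))\alpha(z)=\alpha^{2}(xy)\,\alpha(z).
\]
Finally I would apply \eqref{eqthcs2eq1-1} once more, this time in the forward direction with $x$ replaced by $xy$ and $y$ replaced by $z$, to obtain $\alpha^{2}(xy)\alpha(z)=(xy)\alpha(z)$, which is exactly \eqref{eqderives}.

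There is essentially no obstacle here: the argument is a three-step chain of substitutions. The only point requiring care is the choice of substitutions into \eqref{eqthcs2eq1-1}---using it with $y\mapsto\alpha(y)$ to pull the left factor up to $\alpha^{2}(x)$, and then with $(x,y)\mapsto(xy,z)$ to push it back down---together with the reminder that non-associativity forbids touching the outer bracketing, so every rewriting must occur strictly within the first argument of the outer product.
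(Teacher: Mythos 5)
Your proof is correct and is essentially the paper's own argument read in the opposite direction: the paper writes the same three-step chain $(xy)\alpha(z)=\alpha^{2}(xy)\alpha(z)=(\alpha^{2}(x)\alpha^{2}(y))\alpha(z)=(x\alpha^{2}(y))\alpha(z)$ using exactly the substitutions you identify. No differences of substance.
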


\begin{proof}
By using~\eqref{eqthcs2eq1-1} we have
\[
(xy)\alpha(z)=\alpha^{2}(xy)\alpha(z)=(\alpha^{2}(x)\alpha^{2}(y))\alpha(z)=(x\alpha^{2}(y))\alpha(z),
\]
for all $x,y,z\in X$.
\end{proof}

\begin{lem}\label{Lemevquival}
If $(X,\alpha)$ is a left Hom-quasigroup   satisfying~\eqref{eqthcs2eq1-1},  then  \eqref{Sec4ProEq1}, \eqref{Sec4ProEq2} and \eqref{Sec4ProEq3} are equivalent.
\end{lem}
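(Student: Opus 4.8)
The plan is to reduce each of the three identities \eqref{Sec4ProEq1}, \eqref{Sec4ProEq2}, \eqref{Sec4ProEq3} to a reduced form in which $\alpha$ has first been pushed inward using that $\alpha$ is an endomorphism (so $\alpha(uv)=\alpha(u)\alpha(v)$) and then eliminated from selected factors by means of the hypothesis \eqref{eqthcs2eq1-1} and its consequence \eqref{eqderives}. For instance the left-hand side of \eqref{Sec4ProEq1} becomes $(\alpha(x)\alpha(y))(\alpha^{2}(x)\alpha(z))$, whereupon \eqref{eqthcs2eq1-1} collapses $\alpha^{2}(x)\alpha(z)=x\alpha(z)$. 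First I would carry out these three reductions to obtain the equivalent statements
\begin{align*}
\text{(A)}\quad & (\alpha(x)\alpha(y))(x\alpha(z))=(yx)(\alpha(y)\alpha(z)),\\
\text{(B)}\quad & (x\alpha(y))(\alpha(x)\alpha(z))=(y\alpha(x))(\alpha(y)\alpha(z)),\\
\text{(C)}\quad & (xy)(\alpha(x)\alpha(z))=(\alpha(y)\alpha(x))(y\alpha(z)),
\end{align*}
each reduction being a genuine equivalence because every rewriting step is an identity valid for all $x,y,z$ under the standing hypothesis. (The reduction of \eqref{Sec4ProEq3} to (C) additionally uses \eqref{eqderives} to simplify $(x\alpha^{2}(y))(\alpha(x)\alpha(z))$ to $(xy)(\alpha(x)\alpha(z))$, after first writing $\alpha^{2}(x)\alpha^{2}(y)=x\alpha^{2}(y)$ via \eqref{eqthcs2eq1-1}.)

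The key observation is then that (A) and (C) are the \emph{same} identity: interchanging the roles of $x$ and $y$ (leaving $z$ fixed) turns (A) into (C) with its two sides swapped, so (A) and (C) are logically equivalent with no further hypotheses. It therefore remains only to connect (B) to this pair, and for this I would specialize variables. Substituting $\alpha(x)$ for $x$ in (B) and simplifying its right-hand factor by \eqref{eqthcs2eq1-1} ($\alpha^{2}(x)\alpha(z)=x\alpha(z)$) and the term $(y\alpha^{2}(x))(\alpha(y)\alpha(z))$ by \eqref{eqderives} (noting $\alpha(y)\alpha(z)=\alpha(yz)$ lies in the image of $\alpha$) yields exactly (A); hence (B)$\Rightarrow$(A). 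Similarly, substituting $\alpha(y)$ for $y$ in (C) and applying \eqref{eqthcs2eq1-1} to $\alpha^{2}(y)\alpha(x)=y\alpha(x)$ yields exactly (B); hence (C)$\Rightarrow$(B). Combining these with the free equivalence (A)$\Leftrightarrow$(C) closes the cycle (A)$\Rightarrow$(C)$\Rightarrow$(B)$\Rightarrow$(A), so all three reduced forms, and therefore \eqref{Sec4ProEq1}, \eqref{Sec4ProEq2}, \eqref{Sec4ProEq3}, are equivalent.

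The main subtlety to watch is the direction of the implications produced by the substitutions: replacing $x$ by $\alpha(x)$ only \emph{specializes} an identity, since $\alpha$ need not be surjective, so these moves a priori give implications rather than equivalences. The point that makes the argument go through is precisely the cost-free equivalence (A)$\Leftrightarrow$(C) coming from the $x\leftrightarrow y$ symmetry: it supplies the one missing arrow and lets the three one-directional implications assemble into a single cycle without ever requiring $\alpha$ to be invertible or surjective. Everything else is routine bookkeeping in the three reductions and the two substitutions, using only the endomorphism property of $\alpha$, the hypothesis \eqref{eqthcs2eq1-1}, and the derived identity \eqref{eqderives}.
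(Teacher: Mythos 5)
Your proof is correct and rests on the same mechanism as the paper's: a cycle of one-directional implications obtained by specializing variables to their images under $\alpha$ and then simplifying with the endomorphism property, \eqref{eqthcs2eq1-1} and \eqref{eqderives}. The only real difference is that where the paper closes the cycle \eqref{Sec4ProEq1}$\Rightarrow$\eqref{Sec4ProEq2}$\Rightarrow$\eqref{Sec4ProEq3}$\Rightarrow$\eqref{Sec4ProEq1} with a third substitution, you replace one leg by the observation that the reduced forms of \eqref{Sec4ProEq1} and \eqref{Sec4ProEq3} coincide under interchanging $x$ and $y$ --- a pleasant shortcut, and a correct handling of the fact that substitution only specializes, but not a genuinely different argument.
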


\begin{proof}
Replacing $x$ by $\alpha(x)$ in~\eqref{Sec4ProEq1} gives~\eqref{Sec4ProEq2}.

Replacing $y$ by $\alpha(y)$ in~\eqref{Sec4ProEq2} gives~\eqref{Sec4ProEq3}.

Suppose~\eqref{Sec4ProEq3} holds. Replacing $x$ by $\alpha(x)$
and $y$ by $\alpha(y)$ in~\eqref{Sec4ProEq3} gives $\alpha((\alpha^{2}(x)\alpha^{2}(y))(\alpha(x)z))=(\alpha^{2}(y)\alpha^{2}(x))(\alpha(y)\alpha(z))$.
It follows that
\[
\alpha((xy)(\alpha(x)z))=(y\alpha^{2}(x))(\alpha(y)\alpha(z)),
\]
which together with~\eqref{eqderives} gives~\eqref{Sec4ProEq1}.
\end{proof}

\begin{thm}
Let $(X,\alpha)$ be a left Hom-quasigroup. Then
$(X,\alpha)$ is a Hom-cycle set if and only if it satisfies~\eqref{eqthcs2eq1-1}
and one of~\eqref{Sec4ProEq1},~\eqref{Sec4ProEq2} and~\eqref{Sec4ProEq3}.
\end{thm}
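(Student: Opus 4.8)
The plan is to reduce the whole statement to the two results immediately preceding it: Theorem~\ref{thhcs2}, which identifies being a Hom-cycle set with the conjunction of~\eqref{eqthcs2eq1-1} and~\eqref{eqthcs2eq2}, and Lemma~\ref{Lemevquival}, which asserts that once~\eqref{eqthcs2eq1-1} is assumed the three axioms~\eqref{Sec4ProEq1},~\eqref{Sec4ProEq2},~\eqref{Sec4ProEq3} become mutually equivalent. No direct manipulation of the defining identities is needed; the argument is a matter of logical bookkeeping.

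For the forward implication I would argue straight from the definition: if $(X,\alpha)$ is a Hom-cycle set, then it satisfies all three of~\eqref{Sec4ProEq1}--\eqref{Sec4ProEq3}, and hence in particular at least one of them. That it also satisfies~\eqref{eqthcs2eq1-1} is precisely the content of the ($\Rightarrow$) direction of Theorem~\ref{thhcs2}, where~\eqref{eqthcs2eq1-1} was obtained from Theorem~\ref{Sec3Thm2}(4) through the correspondence recorded in Theorem~\ref{thlndihcs}. This disposes of one direction with no computation.

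For the reverse implication, suppose $(X,\alpha)$ satisfies~\eqref{eqthcs2eq1-1} together with one of the three axioms. Because~\eqref{eqthcs2eq1-1} holds, the hypothesis of Lemma~\ref{Lemevquival} is met, so~\eqref{Sec4ProEq1},~\eqref{Sec4ProEq2} and~\eqref{Sec4ProEq3} are all equivalent to one another; the single axiom assumed therefore upgrades to all three holding simultaneously. By the definition of a Hom-cycle set, $(X,\alpha)$ is then a Hom-cycle set, completing the proof.

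There is essentially no genuine obstacle here, since the statement merely repackages Lemma~\ref{Lemevquival} and Theorem~\ref{thhcs2}. The only point demanding care is the order of invocation: one must first record that~\eqref{eqthcs2eq1-1} is in force in order to apply Lemma~\ref{Lemevquival} legitimately, as the equivalence of the three axioms is available only under that hypothesis, and one must keep in mind that it is Theorem~\ref{thhcs2} that guarantees a Hom-cycle set satisfies~\eqref{eqthcs2eq1-1} to begin with.
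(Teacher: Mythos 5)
Your proposal is correct and follows exactly the paper's route: the paper's entire proof is the one-line observation that the statement follows from Lemma~\ref{Lemevquival} and Theorem~\ref{thhcs2}, which is precisely the bookkeeping you spell out. The only difference is that you make the logical order of invocation explicit, which the paper leaves implicit.
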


\begin{proof}
It follows from Lemma~\ref{Lemevquival} and Theorem~\ref{thhcs2}.
\end{proof}

\begin{thm}\label{thdual}
A Hom-cycle set is non-degenerate if and only if the
corresponding solution to HYBE is non-degenerate.
\end{thm}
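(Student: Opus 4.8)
The plan is to strip the Hom-datum out of both sides and reduce the claim to one equivalence about the underlying left quasigroup, which I then settle by a change of variables. Let $(X,\alpha)$ be a Hom-cycle set and let $S(X,\alpha)=(X,r,\alpha)$ be the corresponding solution, so that $\lambda_x=\sigma_x^{-1}$ and $\rho_y(x)=\sigma_x^{-1}(y)\cdot x=\lambda_x(y)\cdot x$ for all $x,y\in X$. By the construction of $S$, the Hom-quadratic set $(X,r,\alpha)$ is automatically left non-degenerate and involutive; hence it is non-degenerate if and only if it is right non-degenerate, i.e.\ if and only if $\rho_y$ is bijective for every $y\in X$. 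On the other side, $(X,\alpha)$ is non-degenerate exactly when its underlying left quasigroup $(X,\cdot)$ is $\Delta$-bijective, where $\Delta(x,y)=(xy,yx)$. Since $\alpha$ appears in neither condition, the theorem is equivalent to the statement that $(X,\cdot)$ is $\Delta$-bijective if and only if $\rho_y$ is bijective for all $y\in X$.

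The crux is a single identity tying $\Delta$ to $\rho$. Using $\sigma_x\lambda_x=\id$ together with $\rho_y(x)=\lambda_x(y)\cdot x$, I would record that
\[
\Delta\bigl(x,\lambda_x(y)\bigr)=\bigl(x\cdot\lambda_x(y),\,\lambda_x(y)\cdot x\bigr)=\bigl(y,\,\rho_y(x)\bigr)
\]
for all $x,y\in X$. Introduce the map $P\colon X\times X\to X\times X$ given by $P(x,y)=(x,\lambda_x(y))$; because every $\lambda_x$ is bijective, $P$ is a bijection, with inverse $(x,z)\mapsto(x,\sigma_x(z))$. The displayed identity then says precisely that $\Delta\circ P=R$, where $R(x,y)=(y,\rho_y(x))$.

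It remains to read off the equivalence. As $P$ is a bijection, $\Delta$ is bijective if and only if $R$ is. Now $R$ carries the fibre $X\times\{y\}$ into the slab $\{y\}\times X$ by $x\mapsto(y,\rho_y(x))$; since the fibres $X\times\{y\}$ partition the domain and the slabs $\{y\}\times X$ partition the codomain, $R$ is a bijection if and only if each restriction $x\mapsto\rho_y(x)$ is a bijection, that is, if and only if $\rho_y$ is bijective for every $y\in X$. Combined with the first paragraph this proves the theorem. The argument carries no serious computation; the only points requiring care are the reduction in the first paragraph—observing that for $S(X,\alpha)$ left non-degeneracy and involutivity are free, so that ``non-degenerate'' means exactly ``$\rho_y$ bijective for all $y$''—and pinning down the reparametrisation $P$ and the identity $\Delta\circ P=R$ correctly. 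As in the non-Hom cycle-set correspondence recalled in Section~2, the endomorphism $\alpha$ is inert throughout, so the proof is a direct Hom-analogue of the classical non-degeneracy statement.
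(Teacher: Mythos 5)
Your proof is correct, but it takes a genuinely different route from the paper's. The paper proves the two directions separately via the dual operation of Lemma~\ref{lemPhi}: for the forward direction it shows that the left multiplication $\tau_y$ with respect to $\circ$ is a two-sided inverse of $\rho_y$, and for the converse it defines $x\circ y=\rho_x^{-1}(y)$ and verifies the identities \eqref{eqxyyxd} and \eqref{eqxyyxc} directly from the involutivity relations. You instead handle both directions at once by the factorization $\Delta\circ P=R$ with $P(x,y)=(x,\lambda_x(y))$ a bijection and $R(x,y)=(y,\rho_y(x))$ a ``fibrewise'' map, so that bijectivity of $\Delta$ is equivalent to bijectivity of every $\rho_y$; the computation $x\cdot\lambda_x(y)=y$ and $\lambda_x(y)\cdot x=\rho_y(x)$ checks out against the definition of $S$, and the fibre-by-fibre argument for $R$ is sound. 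Your argument is more economical: it bypasses Lemma~\ref{lemPhi} entirely, uses only the definition of $\Delta$-bijectivity, and in fact establishes the slightly more general fact that for any left non-degenerate involutive quadratic set the map $\Delta$ of the associated left quasigroup is bijective iff all $\rho_y$ are. What the paper's longer route buys is the explicit dual operation $\circ$ and the formula $\rho_y^{-1}(x)=y\circ x$, which are reused immediately afterwards in Theorem~\ref{ProDualcy} to show that $(X,\circ,\alpha)$ is again a non-degenerate Hom-cycle set; your proof would need a small supplement to recover that explicit description.
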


\begin{proof}
Let $(X,\alpha)$ be a Hom-cycle set and $(X,r,\alpha)$ the corresponding
solution to HYBE.

($\Rightarrow$) Suppose $(X,\alpha)$ is non-degenerate and $\circ$
is the dual operation defined in Lemma~\ref{lemPhi}. Then~\eqref{eqxyyxc}
can be rewritten as $\lambda_{x\circ y}^{-1}(y\circ x)=x$ for all
$x,y\in X$. Thus $y\circ x=\lambda_{x\circ y}(x)$ for all $x,y\in X$.
It follows by interchanging $x$ and $y$ that $x\circ y=\lambda_{y\circ x}(y)$
for all $x,y\in X$. Substituting the last equation into~\eqref{eqxyyxc},
we get
\[
\lambda_{y\circ x}(y)\cdot\newline(y\circ x)=x.
\]
Denote by $\tau_{y}$ the left multiplication by $y$ with respect
to the operation $\circ$. Then $\rho_{y}\tau_{y}(x)=\rho_{y}(y\circ x)=\lambda_{y\circ x}(y)\cdot(y\circ x)=x$.
Noting that $x\lambda_{x}(y)=y$, by~\eqref{eqxyyxd} we have
\[
\tau_{y}\rho_{y}(x)=(x\lambda_{x}(y))\circ(\lambda_{x}(y)x)=x.
\]
Thus $\rho_{y}$ is bijective, and so $(X,r,\alpha)$ is non-degenerate.

($\Leftarrow$) Suppose $(X,r,\alpha)$ is non-degenerate. Define $x\circ y=\rho_{x}^{-1}(y)$
for all $x,y\in X$. Replacing $y$ by $\lambda_{x}^{-1}(y)$ in~\eqref{sec2:eq1}
we obtain $\rho_{\lambda_{x}^{-1}(y)}(x)=\lambda_{y}^{-1}(x)$, whence
$\rho_{\lambda_{x}^{-1}(y)}^{-1}\lambda_{y}^{-1}(x)=x$, which gives~\eqref{eqxyyxd}. Similarly, replacing $y$ by $\rho_{x}^{-1}(y)$
in~\eqref{sec2:eq2} we can get~\eqref{eqxyyxc}. Thus $(X,\alpha)$
is non-degenerate by Lemma~\ref{lemPhi}.
\end{proof}
Denote by $\mathsf{ndHCS}$ the category of non-degenerate left Hom-cycle
sets. Then $\mathsf{ndHCS}$ is a full subcategory of $\mathsf{HCS}$.

Denote by $\mathsf{ndiS_{hybe}}$ the category of non-degenerate involutive
solutions to HYBE. Then $\mathsf{ndiS_{hybe}}$ is a full subcategory
of $\mathsf{S_{hybe}}$.

Theorem~\ref{thlndihcs} and Theorem~\ref{thdual} have the following
corollary, which generalize \cite[Proposition 2]{Rump1}.
\begin{cor}\label{CorNhcstoHybe}
The functors $G:\mathsf{ndiS_{hybe}}\to\mathsf{ndHCS}$ and $S:\mathsf{ndHCS}\to\mathsf{ndiS_{hybe}}$
are mutually inverse, and so the categories $\mathsf{ndiS_{hybe}}$
and $\mathsf{ndHCS}$ are isomorphic.
\end{cor}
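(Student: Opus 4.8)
The plan is to obtain this corollary as a purely formal restriction of the isomorphism of categories already established in Theorem~\ref{thlndihcs}. By that theorem the functors $G$ and $S$ are mutually inverse between $\mathsf{S_{hybe}}$ and $\mathsf{HCS}$, and by construction $\mathsf{ndiS_{hybe}}$ and $\mathsf{ndHCS}$ are \emph{full} subcategories of $\mathsf{S_{hybe}}$ and $\mathsf{HCS}$, respectively. Consequently it suffices to verify that $G$ and $S$ carry these two subcategories into one another at the level of objects; the morphisms are then handled automatically by fullness, and the natural identities $SG=\id$ and $GS=\id$ coming from Theorem~\ref{thlndihcs} restrict verbatim to the subcategories.

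The two object-level inclusions are exactly what Theorem~\ref{thdual} provides. First I would check that $G$ maps $\mathsf{ndiS_{hybe}}$ into $\mathsf{ndHCS}$: if $(X,r,\alpha)$ is a non-degenerate involutive solution to HYBE, then it is in particular a left non-degenerate involutive solution, so $G(X,r,\alpha)$ is a Hom-cycle set by Theorem~\ref{thlndihcs}; since $(X,r,\alpha)$ is (two-sidedly) non-degenerate, Theorem~\ref{thdual} forces this Hom-cycle set to be non-degenerate, hence it lies in $\mathsf{ndHCS}$. Conversely, if $(X,\alpha)$ is a non-degenerate Hom-cycle set, then $S(X,\alpha)$ is a left non-degenerate involutive solution to HYBE by Theorem~\ref{thlndihcs}, and its full non-degeneracy follows from the non-degeneracy of $(X,\alpha)$ again by Theorem~\ref{thdual}, so $S(X,\alpha)\in\mathsf{ndiS_{hybe}}$.

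With both inclusions in place, the restrictions of $G$ and $S$ remain mutually inverse, and therefore $\mathsf{ndiS_{hybe}}$ and $\mathsf{ndHCS}$ are isomorphic. I do not expect any genuine obstacle: all of the real content has been absorbed into Theorem~\ref{thlndihcs} and Theorem~\ref{thdual}, and what remains is the standard fact that an isomorphism of categories restricts to an isomorphism between full subcategories that are matched object-wise by the functors. The only point that warrants care in the writeup is the bookkeeping of the word \emph{non-degenerate}: for a solution it means two-sided non-degeneracy, whereas for a Hom-cycle set it means $\Delta$-bijectivity of the underlying groupoid, and it is precisely Theorem~\ref{thdual} that identifies these two notions across the correspondence.
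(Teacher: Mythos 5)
Your proposal is correct and follows exactly the route the paper intends: the corollary is stated as an immediate consequence of Theorem~\ref{thlndihcs} (the isomorphism $\mathsf{S_{hybe}}\cong\mathsf{HCS}$) together with Theorem~\ref{thdual} (non-degeneracy is preserved in both directions), with fullness of the subcategories handling the morphisms. The paper gives no further argument, so your write-up matches its reasoning in full.
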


\begin{thm}\label{ProDualcy}
Let $(X,\alpha)$ be a non-degenerate Hom-cycle
set with the dual operation $\circ$. Then $(X,\circ,\alpha)$ is
a non-degenerate Hom-cycle set.
\end{thm}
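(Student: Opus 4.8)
The plan is to realize the dual of $(X,\alpha)$ as the image under $G$ of the \emph{flip-conjugate} of the solution attached to $(X,\alpha)$, and to check the axioms of $\mathsf{S_{hybe}}$ there rather than verify the Hom-cycle set identities for $\circ$ by hand. Write $(X,r,\alpha)=S(X,\alpha)$, with $r(x,y)=(\lambda_x(y),\rho_y(x))$, so that the cycle operation recovered by $G$ is $x\cdot y=\lambda_x^{-1}(y)$. Since $(X,\alpha)$ is non-degenerate, $(X,r,\alpha)$ is a non-degenerate involutive solution to HYBE by Theorem~\ref{thdual}; moreover, as established in the proof of that theorem, the dual operation is given by $x\circ y=\rho_x^{-1}(y)$. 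Let $\tau(x,y)=(y,x)$ be the flip and set $\bar r=\tau r\tau$, so that $\bar r(x,y)=(\rho_x(y),\lambda_y(x))$, i.e. $\bar\lambda_x=\rho_x$ and $\bar\rho_y=\lambda_y$. I would then show that $(X,\bar r,\alpha)$ lies in $\mathsf{S_{hybe}}$ and that $G(X,\bar r,\alpha)=(X,\circ,\alpha)$; Theorem~\ref{thlndihcs} then immediately gives that $(X,\circ,\alpha)$ is a Hom-cycle set.

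Most of the axioms for $\bar r$ are immediate. Involutivity is $\bar r^2=\tau r^2\tau=\id$. Left non-degeneracy is bijectivity of $\bar\lambda_x=\rho_x$, which holds because $(X,r,\alpha)$ is (right) non-degenerate. Compatibility with $\alpha$ follows from $(\alpha\times\alpha)\tau=\tau(\alpha\times\alpha)$ together with $r(\alpha\times\alpha)=(\alpha\times\alpha)r$. Finally, the cycle operation attached to $\bar r$ is $\bar\lambda_x^{-1}(y)=\rho_x^{-1}(y)=x\circ y$, so indeed $G(X,\bar r,\alpha)=(X,\circ,\alpha)$ as left Hom-quasigroups.

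The one substantive point is that $\bar r$ satisfies the HYBE braid relation, and this is where the main work lies. I would introduce the reversal $w\colon X^3\to X^3$, $w(x,y,z)=(z,y,x)$, which satisfies $w^2=\id$, and verify by direct evaluation on a triple the two conjugation identities
\[
\bar r\times\alpha=w(\alpha\times r)w,\qquad \alpha\times\bar r=w(r\times\alpha)w.
\]
Substituting these into the relation to be proved,
\[
(\alpha\times\bar r)(\bar r\times\alpha)(\alpha\times\bar r)=(\bar r\times\alpha)(\alpha\times\bar r)(\bar r\times\alpha),
\]
and cancelling the internal $w^2=\id$, turns it into
\[
w\,(r\times\alpha)(\alpha\times r)(r\times\alpha)\,w=w\,(\alpha\times r)(r\times\alpha)(\alpha\times r)\,w,
\]
which is exactly the HYBE braid relation for $r$ with its two sides interchanged; since $(X,r,\alpha)$ solves HYBE, this holds. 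This is the Hom-analogue of the familiar fact that $\tau r\tau$ solves YBE whenever $r$ does. Hence $(X,\bar r,\alpha)\in\mathsf{S_{hybe}}$, and $(X,\circ,\alpha)$ is a Hom-cycle set.

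It remains to see that $(X,\circ,\alpha)$ is non-degenerate, i.e. that $(X,\circ)$ is $\Delta$-bijective, and here I would simply exhibit $\cdot$ as its dual operation: equations~\eqref{eqxyyxc} and~\eqref{eqxyyxd} are precisely the two identities of Lemma~\ref{lemPhi} for the pair $(\circ,\cdot)$ in place of $(\cdot,\circ)$, so $(X,\circ)$ is $\Delta$-bijective by Lemma~\ref{lemPhi}. Therefore $(X,\circ,\alpha)$ is a non-degenerate Hom-cycle set. I expect the conjugation bookkeeping in the HYBE step to be the main obstacle — both getting the identities $\bar r\times\alpha=w(\alpha\times r)w$ and $\alpha\times\bar r=w(r\times\alpha)w$ exactly right, and tracking that conjugation by $w$ swaps the two sides of the braid relation rather than fixing each side; everything else is routine.
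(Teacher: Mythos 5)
Your proposal is correct and follows essentially the same route as the paper: both identify the dual operation with the flip-conjugate solution $\tau r\tau$ of $(X,r,\alpha)=S(X,\alpha)$ and transfer the Hom-cycle set structure through the $G$/$S$ correspondence of Theorem~\ref{thlndihcs}. The only real difference is that you verify in detail (via conjugation by the reversal $w$) the claim that $\tau r\tau$ again solves HYBE, which the paper merely asserts, and you obtain non-degeneracy of $(X,\circ)$ directly from the symmetry of Lemma~\ref{lemPhi} rather than via Corollary~\ref{CorNhcstoHybe}; both variants are sound.
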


\begin{proof}
Suppose $S(X,\alpha)=(X,r,\alpha)$. By Theorem~\ref{thdual}, $(X,r,\alpha)$ is a non-degenerate involutive solution to HYBE. Replacing $y$ by $\lambda_{x}(y)$
in~\eqref{eqxyyxd}, we have $(x\lambda_{x}(y))\circ(\lambda_{x}(y)x)=x$,
whence $y\circ\rho_{y}(x)=x$. Replacing $x$ by $\rho_{y}^{-1}(x)$
in the last equation gives $y\circ x=\rho_{y}^{-1}(x)$,  and so $\rho_{y}(y\circ x)=x$. Thus $(X,\circ)$ is a left quasigroup, and we have \[\alpha(x)=\alpha\rho_{y}(y\circ x)=\rho_{\alpha(y)}\alpha(y\circ x),\] whence $\alpha(y\circ x)=\rho_{\alpha(y)}^{-1}\alpha(x)=\alpha(y)\circ\alpha(x)$. Hence $(X,\circ,\alpha)$ is a left Hom-quasigroup. Suppose $S(X,\circ,\alpha)=(X,r^{\circ},\alpha)$. By Corollary~\ref{CorNhcstoHybe}, it suffices to prove that $(X,r^{\circ},\alpha)$ is a non-degenerate involutive solution to HYBE. Since $\lambda_{y}^{\circ -1}(x)=y\circ x=\rho_{y}^{-1}(x)$, we have $\lambda_{y}^{\circ}=\rho_{y}$. Replacing
$y$ by $\rho_{x}(y)$ in~\eqref{eqxyyxc}, we have $(x\circ\rho_{x}(y))(\rho_{x}(y)\circ x)=x$.
Thus \[y(\rho_{x}(y)\circ x)=x=y\lambda_{y}(x),\]  and so we have $\lambda_{y}(x)=\rho_{x}(y)\circ x=\lambda_{x}^{\circ}(y)\circ x=\rho_{y}^{\circ}(x)$, whence $\rho_{y}^{\circ}=\lambda_{y}$.
Thus $r^{\circ}(x,y)=(\rho_{x}(y),\lambda_{y}(x))$.
Clearly $r^{\circ}=\tau r\tau$, where $\tau(x,y)=(y,x)$. Since $(X,r,\alpha)$ is a non-degenerate
involutive solution to HYBE,  so is $(X,r^{\circ},\alpha)$, as desired.
\end{proof}

Finite cycle sets and cycle sets with bijective square maps, especially
square-free cycle sets, are non-degenerate~\cite{Rump1}, but Hom-cycle
sets are not the case.

\begin{ex}\label{ex4order}
Let $X=\{1,2,3,4\}$ with a map $\alpha:X\rightarrow X$
such that $\alpha(X)=\{1\}$. Define an operation on $X$ by the following
multiplication table:
\[
\begin{array}{c|cccc}
\cdot & 1 & 2 & 3 & 4\\
\hline 1 & 1 & 2 & 3 & 4\\
2 & 1 & 2 & 3 & 4\\
3 & 1 & 4 & 3 & 2\\
4 & 1 & 3 & 2 & 4
\end{array}
\]
Then $(X,\cdot,\alpha)$ is a square-free Hom-cycle set, but not non-degenerate
since $\Delta(3,4)=(2,2)=\Delta(2,2)$. \end{ex}

\section{Twists}
Let $(X,\alpha)$ be a left Hom-quasigroup. Define
an operation on $X$ by $x\cdot^{\prime}y=\alpha(x)y$ for all $x,y\in X$. Then $(X,\cdot^{\prime})$ is a left quasigroup,
and
\[
\alpha(x\cdot^{\prime}y)=\alpha\left(\alpha(x)y\right)=\alpha^{2}(x)\alpha(y)=\alpha(x)\cdot^{\prime}\alpha(y).
\]
Thus $\alpha$ is an endomorphism of $(X,\cdot^{\prime})$. Hence
$(X,\cdot^{\prime},\alpha)$ is a left Hom-quasigroup. We call $(X,\cdot^{\prime},\alpha)$
the twist of $(X,\alpha)$, denoted by $T(X,\alpha)$.

Using twist we can define a functor $T:\mathsf{HQG}\to \mathsf{HQG}$ in a natural
way.

A left Hom-quasigroup $(X,\alpha)$ is called an im-cycle set if the
following are satisfied:
\begin{gather}
\alpha^{3}(x)\alpha(y)=\alpha(x)\alpha(y),\label{eqCommon1}\\
(\alpha(x)\alpha(y))(\alpha(x)\alpha(z))=(\alpha(y)\alpha(x))(\alpha(y)\alpha(z)),\label{eqCommon2}
\end{gather}
for all $x,y,z\in X$.

It is clear that $(X,\alpha)$ is an im-cycle set if and only if $\alpha(X)$ is a cycle set and~\eqref{eqCommon1} holds.

\begin{thm}
\label{ThmStableCS} A left Hom-quasigroup is an im-cycle set if and only if its twist is a Hom-cycle set.
 \end{thm}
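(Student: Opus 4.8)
The plan is to reduce the statement to Theorem~\ref{thhcs2} applied to the twist $T(X,\alpha)=(X,\cdot^{\prime},\alpha)$, where $x\cdot^{\prime}y=\alpha(x)\cdot y$; the construction preceding this section already shows $(X,\cdot^{\prime},\alpha)$ is a left Hom-quasigroup with $\alpha$ an endomorphism. By Theorem~\ref{thhcs2}, the twist is a Hom-cycle set exactly when \eqref{eqthcs2eq1-1} and \eqref{eqthcs2eq2} hold for $\cdot^{\prime}$, so I would first rewrite these in terms of $\cdot$. A direct computation gives $\alpha^{2}(x)\cdot^{\prime}\alpha(y)=\alpha^{3}(x)\alpha(y)$ and $x\cdot^{\prime}\alpha(y)=\alpha(x)\alpha(y)$, so \eqref{eqthcs2eq1-1} for $\cdot^{\prime}$ is precisely \eqref{eqCommon1}. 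For the second, using that $\alpha$ is an endomorphism of $(X,\cdot)$ (hence $\alpha(\alpha(x)\alpha(y))=\alpha^{2}(x)\alpha^{2}(y)$), expanding $(x\cdot^{\prime}\alpha(y))\cdot^{\prime}(\alpha(x)\cdot^{\prime}\alpha(z))$ and its mirror image shows that \eqref{eqthcs2eq2} for $\cdot^{\prime}$ is equivalent to
\[
(\alpha^{2}(x)\alpha^{2}(y))(\alpha^{2}(x)\alpha(z))=(\alpha^{2}(y)\alpha^{2}(x))(\alpha^{2}(y)\alpha(z)),
\]
which I will call $(\ast)$. Thus the twist is a Hom-cycle set iff \eqref{eqCommon1} and $(\ast)$ hold, while by definition $(X,\alpha)$ is an im-cycle set iff \eqref{eqCommon1} and \eqref{eqCommon2} hold. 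It therefore suffices to prove that, in the presence of \eqref{eqCommon1}, the identities $(\ast)$ and \eqref{eqCommon2} are equivalent.

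The implication \eqref{eqCommon2}$\Rightarrow(\ast)$ is immediate, since $(\ast)$ is \eqref{eqCommon2} after substituting $\alpha(x),\alpha(y)$ for $x,y$. The converse is the heart of the matter. Here I would first record the reformulation of \eqref{eqCommon1} that $\alpha^{2}(p)\cdot q=p\cdot q$ for all $p,q\in\alpha(X)$ (take $p=\alpha(x)$, $q=\alpha(y)$, so that $\alpha^{2}(p)=\alpha^{3}(x)$); in other words, left multiplication by $p$ and by $\alpha^{2}(p)$ agree on $\alpha(X)$. To derive \eqref{eqCommon2}, I set $a=\alpha(x)$, $b=\alpha(y)$, $c=\alpha(z)\in\alpha(X)$ and rewrite the left-hand side $(ab)(ac)$: the inner multiplier gives $ac=\alpha^{2}(a)c$, and since $ab\in\alpha(X)$ with $\alpha^{2}(ab)=\alpha^{2}(a)\alpha^{2}(b)$, the outer multiplier $ab$ may be replaced by $\alpha^{2}(a)\alpha^{2}(b)$, so that $(ab)(ac)=(\alpha^{2}(a)\alpha^{2}(b))(\alpha^{2}(a)c)$; the right-hand side $(ba)(bc)$ becomes $(\alpha^{2}(b)\alpha^{2}(a))(\alpha^{2}(b)c)$ in the same way. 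Now applying $(\ast)$ with $a,b$ in place of its first two variables and $z$ in place of the third equates the two expressions, giving \eqref{eqCommon2}.

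I expect the only genuine obstacle to be this last step, and the point is to apply \eqref{eqCommon1} in the right direction. The naive attempt, namely to \emph{lower} the single factors $\alpha^{2}(x),\alpha^{2}(y)$ appearing in $(\ast)$ down to $\alpha(x),\alpha(y)$, cannot work, because \eqref{eqCommon1} only relates left multipliers differing by $\alpha^{2}$, not by $\alpha$, and it changes multipliers but never right factors. The resolution is to go the other way and \emph{lift} the compound left multiplier $ab$ of \eqref{eqCommon2} all the way to $\alpha^{2}(ab)=\alpha^{2}(a)\alpha^{2}(b)$, which matches exactly the double-$\alpha$ shift built into $(\ast)$. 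Equivalently, one may package the whole equivalence conceptually: under \eqref{eqCommon1}, condition $(\ast)$ asserts the cycle-set identity on $\alpha(X)$ with left multipliers taken from $\alpha^{2}(X)$, and \eqref{eqCommon1} forces left multiplication by elements of $\alpha^{2}(X)$ and of $\alpha(X)$ to coincide on $\alpha(X)$; hence $(\ast)$ holds iff $\alpha(X)$ is a cycle set, i.e. iff \eqref{eqCommon2} holds. Combined with the characterization that $(X,\alpha)$ is an im-cycle set iff $\alpha(X)$ is a cycle set and \eqref{eqCommon1} holds, this yields the theorem.
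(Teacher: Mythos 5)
Your proposal is correct and follows essentially the same route as the paper: both reduce the statement to Theorem~\ref{thhcs2} applied to the twist, observe that \eqref{eqthcs2eq1-1} for $\cdot^{\prime}$ is exactly \eqref{eqCommon1} and that \eqref{eqthcs2eq2} for $\cdot^{\prime}$ is the $\alpha$-shifted form of \eqref{eqCommon2}, and then use \eqref{eqCommon1} (left multiplication by $p$ and $\alpha^{2}(p)$ agreeing on $\alpha(X)$, applied to both the inner and the compound outer multipliers) to pass between the shifted and unshifted identities. Your explicit discussion of why the naive ``lowering'' fails is a useful clarification of a step the paper leaves terse, but the underlying computation is the same.
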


\begin{proof}Let $(X,\cdot,\alpha)$ be a left Hom-quasigroup. Then its twist $(X,\cdot^{\prime},\alpha)$ is also a left Hom-quasigroup.

($\Rightarrow$)  By~\eqref{eqCommon1} we have $\alpha^{2}(x)\cdot^{\prime}\alpha(y)=x\cdot^{\prime}\alpha(y)$
for all $x,y\in X$. Replacing $x$ by $\alpha(x)$ and $y$ by $\alpha(y)$
in~\eqref{eqCommon2}, we obtain
\[
(\alpha^{2}(x)\alpha^{2}(y))(\alpha^{2}(x)\alpha(z)) =(\alpha^{2}(y)\alpha^{2}(x))(\alpha^{2}(y)\alpha(z)),
\]
which implies that $(x\cdot^{\prime}\alpha(y))\cdot^{\prime}(\alpha(x)\cdot^{\prime}\alpha(z)) =(y\cdot^{\prime}\alpha(x))\cdot^{\prime}(\alpha(y)\cdot^{\prime}\alpha(z))$.
By Theorem~\ref{thhcs2}, $(X,\cdot^{\prime},\alpha)$ is a Hom-cycle
set.

 ($\Leftarrow$) Since $(X,\cdot^{\prime},\alpha)$
is a Hom-cycle set, applying Theorem~\ref{thhcs2} to $(X,\cdot^{\prime},\alpha)$ yields~\eqref{eqCommon1} and
 \[
\alpha(\alpha(x)\alpha(y))(\alpha^{2}(x)\alpha(z)) =\alpha(\alpha(y)\alpha(x))(\alpha^{2}(y)\alpha(z)).
\]
Replacing $x$ by $\alpha(x)$ and $y$ by $\alpha(y)$ in the last
equation, we have
\[
(\alpha^{3}(x)\alpha^{3}(y))(\alpha^{3}(x)\alpha(z)) =(\alpha^{3}(y)\alpha^{3}(x))(\alpha^{3}(y)\alpha(z)).
\]
By~\eqref{eqCommon1}, we see that $(\alpha(X),\cdot)$
is a cycle set. Thus $(X,\alpha)$ is an im-cycle set.
\end{proof}

\begin{cor}
  The twist of an im-cycle set is a Hom-cycle set.
\end{cor}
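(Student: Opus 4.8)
The plan is to read the corollary off directly from Theorem~\ref{ThmStableCS}, since it is precisely one half of that biconditional. By definition an im-cycle set $(X,\alpha)$ is a left Hom-quasigroup satisfying~\eqref{eqCommon1} and~\eqref{eqCommon2}, so it lies in the domain to which Theorem~\ref{ThmStableCS} applies. That theorem asserts that a left Hom-quasigroup is an im-cycle set if and only if its twist is a Hom-cycle set. Invoking the forward (``only if'') implication, I would conclude that the twist $T(X,\alpha)$ of an im-cycle set $(X,\alpha)$ is a Hom-cycle set, which is exactly the statement of the corollary.

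There is essentially no obstacle here, since the substantive work has already been carried out in the proof of Theorem~\ref{ThmStableCS}. The only point worth confirming is that the twist construction does land in the class of left Hom-quasigroups, so that the phrase ``Hom-cycle set'' is meaningful for the output; but this is precisely what was checked when the functor $T:\mathsf{HQG}\to\mathsf{HQG}$ was defined. Consequently the proof reduces to a single sentence applying the forward direction of Theorem~\ref{ThmStableCS}, and no separate computation is required.
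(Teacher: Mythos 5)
Your proof is correct and matches the paper's intent exactly: the corollary is stated immediately after Theorem~\ref{ThmStableCS} precisely because it is the forward implication of that biconditional, and the paper offers no further argument. Your additional remark that the twist of a left Hom-quasigroup is again a left Hom-quasigroup is a reasonable sanity check, already established when the functor $T$ was defined.
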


\begin{cor}\label{CorSCtoHSC}
Let $X$ be a cycle set with an endomorphism $\alpha$ such that~\eqref{eqCommon1} holds. Then  the twist of $(X,\alpha)$ is a Hom-cycle set.
\end{cor}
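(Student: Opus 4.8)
The plan is to derive this corollary as a direct consequence of Theorem~\ref{ThmStableCS} together with the characterization of im-cycle sets given immediately before that theorem. Recall that $(X,\alpha)$ is an im-cycle set if and only if $\alpha(X)$ is a cycle set and~\eqref{eqCommon1} holds. So my strategy is to show that the hypotheses of this corollary force $(X,\alpha)$ to be an im-cycle set, and then invoke Theorem~\ref{ThmStableCS} to conclude that its twist is a Hom-cycle set.

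First I would verify that $(X,\alpha)$ is a left Hom-quasigroup: since $X$ is a cycle set it is in particular a left quasigroup, and $\alpha$ is assumed to be an endomorphism, so the pair $(X,\alpha)$ qualifies. Next I would check the two defining conditions of an im-cycle set. Condition~\eqref{eqCommon1}, namely $\alpha^{3}(x)\alpha(y)=\alpha(x)\alpha(y)$ for all $x,y\in X$, is part of the hypothesis, so nothing further is needed there. For the remaining condition~\eqref{eqCommon2}, I would use that $X$ is a cycle set, which by definition satisfies $(xy)(xz)=(yx)(yz)$ for all $x,y,z\in X$. Substituting $\alpha(x)$ for $x$, $\alpha(y)$ for $y$ and $\alpha(z)$ for $z$ in this cycle set identity immediately yields
\[
(\alpha(x)\alpha(y))(\alpha(x)\alpha(z))=(\alpha(y)\alpha(x))(\alpha(y)\alpha(z)),
\]
which is exactly~\eqref{eqCommon2}. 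Equivalently, one may phrase this as: the image $\alpha(X)$, being the image of an endomorphism of a cycle set, is itself a cycle set, so~\eqref{eqCommon2} holds on $\alpha(X)$ and hence for all arguments after applying $\alpha$.

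Having established both~\eqref{eqCommon1} and~\eqref{eqCommon2}, I conclude that $(X,\alpha)$ is an im-cycle set. Applying Theorem~\ref{ThmStableCS}, its twist $T(X,\alpha)$ is a Hom-cycle set, which is precisely the claim. I anticipate no genuine obstacle here: the corollary is essentially a specialization of Theorem~\ref{ThmStableCS} in which the im-cycle set hypothesis is repackaged as ``cycle set plus endomorphism plus~\eqref{eqCommon1}.'' The only point requiring a moment of care is confirming that the cycle set law on $X$ genuinely descends to~\eqref{eqCommon2} after the substitution $x\mapsto\alpha(x)$, $y\mapsto\alpha(y)$, $z\mapsto\alpha(z)$; this is routine since the cycle set identity holds for all elements of $X$ and $\alpha(x),\alpha(y),\alpha(z)$ are among them.
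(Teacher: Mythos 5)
Your proof is correct and follows exactly the route the paper intends: the hypotheses make $(X,\alpha)$ an im-cycle set (since \eqref{eqCommon1} is assumed and \eqref{eqCommon2} is just the cycle set identity evaluated at $\alpha(x),\alpha(y),\alpha(z)$), and Theorem~\ref{ThmStableCS} then gives the conclusion. The paper leaves this corollary without an explicit proof precisely because it is this immediate specialization, so your write-up matches the intended argument.
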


\begin{thm}\label{Thmhcycle}
The twist of a Hom-cycle set is an im-cycle set.
\end{thm}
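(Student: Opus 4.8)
The plan is to verify directly that the twist $T(X,\alpha)=(X,\cdot^{\prime},\alpha)$, where $a\cdot^{\prime}b=\alpha(a)b$, satisfies the two defining identities \eqref{eqCommon1} and \eqref{eqCommon2} of an im-cycle set. Since $(X,\cdot^{\prime},\alpha)$ is already a left Hom-quasigroup (as shown at the start of this section), only these two identities remain. The first step is purely bookkeeping: rewrite \eqref{eqCommon1} and \eqref{eqCommon2} for $\cdot^{\prime}$ in terms of the original operation, using that $\alpha$ is an endomorphism, so that $a\cdot^{\prime}b=\alpha(a)b$ and $\alpha(a\cdot^{\prime}b)=\alpha^{2}(a)\alpha(b)$. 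A short computation shows that \eqref{eqCommon1} for $\cdot^{\prime}$ becomes $\alpha^{4}(x)\alpha(y)=\alpha^{2}(x)\alpha(y)$, while \eqref{eqCommon2} for $\cdot^{\prime}$ becomes
\[
(\alpha^{3}(x)\alpha^{2}(y))(\alpha^{2}(x)\alpha(z))=(\alpha^{3}(y)\alpha^{2}(x))(\alpha^{2}(y)\alpha(z)).
\]
Since $(X,\alpha)$ is a Hom-cycle set, by Theorem~\ref{thhcs2} it satisfies \eqref{eqthcs2eq1-1} and \eqref{eqthcs2eq2}, and hence also \eqref{eqderives}; these are the tools I would use.

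The first identity is immediate, being \eqref{eqthcs2eq1-1} with $x$ replaced by $\alpha^{2}(x)$. For the second I would first establish a small reduction principle: whenever a product has its right factor in $\alpha(X)$, an $\alpha^{2}$ may be inserted into or deleted from its left inner factor, i.e. $(\alpha^{2}(a)b)\alpha(d)=(ab)\alpha(d)$ for all $a,b,d\in X$. This follows by combining \eqref{eqthcs2eq1-1} applied to $\alpha^{2}(ab)\alpha(d)=(ab)\alpha(d)$, together with $\alpha^{2}(ab)=\alpha^{2}(a)\alpha^{2}(b)$, and then \eqref{eqderives} to convert $\alpha^{2}(a)\alpha^{2}(b)$ back into $\alpha^{2}(a)b$ inside such a product.

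With this principle the second identity is obtained by sandwiching \eqref{eqthcs2eq2}. Applying \eqref{eqthcs2eq2} with $(x,y,z)$ replaced by $(\alpha(x),\alpha(y),z)$ gives $(\alpha(x)\alpha^{2}(y))(\alpha^{2}(x)\alpha(z))=(\alpha(y)\alpha^{2}(x))(\alpha^{2}(y)\alpha(z))$, where both right factors $\alpha^{2}(x)\alpha(z)=\alpha(\alpha(x)z)$ and $\alpha^{2}(y)\alpha(z)=\alpha(\alpha(y)z)$ lie in $\alpha(X)$. The reduction principle then raises the left inner factors $\alpha(x)$ and $\alpha(y)$ to $\alpha^{3}(x)$ and $\alpha^{3}(y)$, which is exactly the required identity. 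The only mildly delicate point is the reduction principle, since it needs \eqref{eqderives} (itself derived from \eqref{eqthcs2eq1-1}) in addition to \eqref{eqthcs2eq1-1}; everything else is substitution and matching of $\alpha$-exponents. Having verified \eqref{eqCommon1} and \eqref{eqCommon2} for $\cdot^{\prime}$, we conclude by definition that $(X,\cdot^{\prime},\alpha)$ is an im-cycle set.
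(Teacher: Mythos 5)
Your proof is correct and takes essentially the same route as the paper: both start from the characterization of Hom-cycle sets in Theorem~\ref{thhcs2} and massage \eqref{eqthcs2eq2} using \eqref{eqthcs2eq1-1} and \eqref{eqderives}, your ``reduction principle'' being exactly the paper's two-step application of those identities. The only cosmetic difference is that you verify \eqref{eqCommon1} and \eqref{eqCommon2} for $\cdot'$ literally, whereas the paper verifies the equivalent condition that $(\alpha(X),\cdot')$ is a cycle set together with \eqref{eqCommon1}.
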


\begin{proof}
Let $(X,\cdot,\alpha)$ be a Hom-cycle set with the twist $(X,\cdot',\alpha)$.
By~\eqref{eqthcs2eq1-1}, we can rewrite~\eqref{eqthcs2eq2} as
\begin{equation*}\alpha^{2}(x\alpha(y))(\alpha(x)\alpha(z))=\alpha^{2}(y\alpha(x))(\alpha(y)\alpha(z)),\end{equation*}
that is,
\[
(\alpha^{2}(x)\alpha^{3}(y))(\alpha(x)\alpha(z))=(\alpha^{2}(y)\alpha^{3}(x))(\alpha(y)\alpha(z)),
\]
for all $x,y,z\in X$. Thus by~\eqref{eqderives}, we have
\[
(\alpha^{2}(x)\alpha(y))(\alpha(x)\alpha(z))=(\alpha^{2}(y)\alpha(x))(\alpha(y)\alpha(z)),
\]
which implies $(x\cdot^{\prime}{y})\cdot^{\prime}(x\cdot^{\prime}\alpha(z))=(y\cdot^{\prime}x)\cdot^{\prime}(y\cdot^{\prime}\alpha(z))$
for all $x,y,z\in X$. Hence $(\alpha(X),\cdot^{\prime})$ is a cycle
set. By~\eqref{eqthcs2eq1-1}, we have
\[\alpha^{4}(x)\cdot \alpha(y)=\alpha^{2}(x)\cdot \alpha(y)\]
 for all $x,y\in X$, whence $\alpha^{3}(x)\cdot' \alpha(y)=\alpha(x)\cdot' \alpha(y)$ for all $x,y\in X$. Thus $(X,\cdot',\alpha)$ is an im-cycle set.
\end{proof}

\begin{thm}\label{thndtw}
If $(X,\alpha)$ be a non-degenerate Hom-cycle set,
then its twist is non-degenerate.
\end{thm}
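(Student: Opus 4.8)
The plan is to establish non-degeneracy through Lemma~\ref{lemPhi} by exhibiting an explicit dual operation for the twisted groupoid, rather than by attacking bijectivity of the associated map $\Delta^{\prime}$ head on. Writing $x\cdot^{\prime}y=\alpha(x)y$ for the twisted product, so that $\Delta^{\prime}(x,y)=(\alpha(x)y,\alpha(y)x)$, I must produce an operation $\circ^{\prime}$ on $X$ satisfying $(x\cdot^{\prime}y)\circ^{\prime}(y\cdot^{\prime}x)=x$ and $(x\circ^{\prime}y)\cdot^{\prime}(y\circ^{\prime}x)=x$ for all $x,y\in X$. By Lemma~\ref{lemPhi} the existence of such a $\circ^{\prime}$ is equivalent to $\Delta^{\prime}$-bijectivity of $(X,\cdot^{\prime})$, which is exactly non-degeneracy of the twist.

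To guess the correct $\circ^{\prime}$, I would first invert $\Delta^{\prime}$ heuristically. Let $\circ$ be the dual operation of the non-degenerate Hom-cycle set $(X,\cdot)$ provided by Lemma~\ref{lemPhi}. If $(u,v)=\Delta^{\prime}(x,y)=(\alpha(x)y,\alpha(y)x)$, then applying $\alpha$ and using~\eqref{eqthcs2eq1-1} gives $\alpha(u)=\alpha^{2}(x)\alpha(y)=x\alpha(y)$ and $\alpha(v)=\alpha^{2}(y)\alpha(x)=y\alpha(x)$. Hence $\Delta(\alpha(x),y)=(u,\alpha(v))$ and $\Delta(\alpha(y),x)=(v,\alpha(u))$, and inverting these two instances of $\Delta$ through $\circ$ recovers $x=\alpha(u)\circ v$ and $y=\alpha(v)\circ u$. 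This computation tells me to set $a\circ^{\prime}b:=\alpha(a)\circ b$, for then $u\circ^{\prime}v=\alpha(u)\circ v=x$ and $v\circ^{\prime}u=\alpha(v)\circ u=y$.

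With this candidate in hand the verification becomes a direct substitution. The first identity, after using that $\alpha$ is an endomorphism of $(X,\cdot)$ and invoking~\eqref{eqthcs2eq1-1}, reduces to $(x\alpha(y))\circ(\alpha(y)x)=x$, which is precisely~\eqref{eqxyyxd}. The second identity is the delicate one, and is where I expect the main work to lie: it becomes $(\alpha^{2}(x)\circ\alpha(y))\cdot(\alpha(y)\circ x)=x$, and to close it I need both that $\alpha$ is an endomorphism with respect to the dual operation $\circ$ and that the analogue of~\eqref{eqthcs2eq1-1} holds for $\circ$, namely $\alpha^{2}(x)\circ\alpha(y)=x\circ\alpha(y)$. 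Both facts are supplied by Theorem~\ref{ProDualcy}, which guarantees that $(X,\circ,\alpha)$ is itself a non-degenerate Hom-cycle set; granting them, the left-hand side collapses to $(x\circ\alpha(y))\cdot(\alpha(y)\circ x)=x$ by~\eqref{eqxyyxc}. Thus the principal obstacle is not the algebra of the final substitutions but identifying the right dual operation $\circ^{\prime}$ and recognizing that its verification forces one to transport~\eqref{eqthcs2eq1-1} and the endomorphism property to the dual structure, for which Theorem~\ref{ProDualcy} is exactly the needed input.
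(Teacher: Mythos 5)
Your proposal is correct and follows essentially the same route as the paper's own proof: both define the candidate dual operation $x\circ^{\prime}y=\alpha(x)\circ y$, invoke Theorem~\ref{ProDualcy} to transport the Hom-cycle set structure (hence the endomorphism property of $\alpha$ and the identity $\alpha^{2}(x)\circ\alpha(y)=x\circ\alpha(y)$) to the dual operation, and then verify the two identities of Lemma~\ref{lemPhi} by direct substitution using~\eqref{eqthcs2eq1-1},~\eqref{eqxyyxd} and~\eqref{eqxyyxc}.
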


\begin{proof}
Let $T(X,\alpha)=(X,\cdot^{\prime},\alpha)$.
By Theorem~\ref{ProDualcy}, $(X,\circ,\alpha)$ is a Hom-cycle set.
Applying Theorem~\ref{thhcs2} to $(X,\circ,\alpha)$ we obtain
\begin{equation}
\alpha^{2}(x)\circ\alpha(y)=x\circ\alpha(y),\label{eqcirc2}
\end{equation}
for all $x,y\in X$. Define $x\circ^{\prime}y=\alpha(x)\circ y$ for
all $x,y\in X$. Then by~\eqref{eqthcs2eq1-1},~\eqref{eqcirc2} and
Lemma~\ref{lemPhi} we have
\begin{gather*}
(x\cdot^{\prime}y)\circ^{\prime}(y\cdot^{\prime}x)=\alpha(\alpha(x)y)\circ(\alpha(y)x)=(x\alpha(y))\circ(\alpha(y)x)=x,\\
(x\circ^{\prime}y)\cdot^{\prime}(y\circ^{\prime}x)=\alpha(\alpha(x)\circ y)(\alpha(y)\circ x)=(x\circ\alpha(y))(\alpha(y)\circ x)=x.
\end{gather*}
Thus $(X,\cdot^{\prime},\alpha)$ is non-degenerate by Lemma~\ref{lemPhi}.
\end{proof}

\begin{lem}\label{lemsingnon}
Let $(X,\alpha)$ be a left Hom-quasigroup with
$\alpha(X)$ singleton. Then its twist is a non-degenerate left Hom-quasigroup.
\end{lem}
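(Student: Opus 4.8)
The plan is to exploit the fact that when $\alpha(X)$ is a singleton the twist operation degenerates into something independent of its first argument, which makes $\Delta$-bijectivity transparent. Write $\alpha(X)=\{\theta\}$, so that $\alpha(x)=\theta$ for every $x\in X$. The twist $T(X,\alpha)=(X,\cdot^{\prime},\alpha)$ then has operation $x\cdot^{\prime}y=\alpha(x)y=\theta y=\sigma_{\theta}(y)$, where $\sigma_{\theta}$ is the left multiplication by $\theta$ in the original left quasigroup $X$. In particular $x\cdot^{\prime}y$ does not depend on $x$. By the general twist construction preceding Theorem~\ref{ThmStableCS}, $(X,\cdot^{\prime},\alpha)$ is already known to be a left Hom-quasigroup, so only non-degeneracy remains to be verified, i.e. that the map $\Delta^{\prime}\colon(x,y)\mapsto(x\cdot^{\prime}y,\,y\cdot^{\prime}x)$ is bijective.

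First I would compute $\Delta^{\prime}$ directly. Since $x\cdot^{\prime}y=\sigma_{\theta}(y)$ and $y\cdot^{\prime}x=\sigma_{\theta}(x)$, we obtain $\Delta^{\prime}(x,y)=(\sigma_{\theta}(y),\sigma_{\theta}(x))=(\sigma_{\theta}\times\sigma_{\theta})\tau(x,y)$, where $\tau(x,y)=(y,x)$ is the swap. Because $X$ is a left quasigroup, $\sigma_{\theta}$ is a bijection, hence so is $\sigma_{\theta}\times\sigma_{\theta}$; composed with the bijection $\tau$, the map $\Delta^{\prime}$ is bijective, with inverse $(u,v)\mapsto(\sigma_{\theta}^{-1}(v),\sigma_{\theta}^{-1}(u))$. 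Alternatively, to match the framework of Lemma~\ref{lemPhi}, one checks that $x\circ y:=\sigma_{\theta}^{-1}(y)$ is the dual operation: indeed $(x\cdot^{\prime}y)\circ(y\cdot^{\prime}x)=\sigma_{\theta}^{-1}(\sigma_{\theta}(x))=x$ and $(x\circ y)\cdot^{\prime}(y\circ x)=\sigma_{\theta}(\sigma_{\theta}^{-1}(x))=x$, so $\Delta^{\prime}$ is bijective by Lemma~\ref{lemPhi}. Either route yields non-degeneracy.

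There is no serious obstacle here: the entire content is the observation that a singleton image collapses the twist operation to $x\cdot^{\prime}y=\sigma_{\theta}(y)$, after which $\Delta^{\prime}$ is literally a swap followed by the coordinatewise bijection $\sigma_{\theta}\times\sigma_{\theta}$. The only point requiring care is invoking that $X$, being a left quasigroup, has $\sigma_{\theta}$ invertible, so that the dual operation $\circ$ is well defined and the two identities of Lemma~\ref{lemPhi} hold.
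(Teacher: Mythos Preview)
Your proof is correct and follows essentially the same route as the paper: both observe that with $\alpha(X)=\{\theta\}$ the twist operation collapses to $x\cdot' y=\sigma_\theta(y)$, so $\Delta'$ is bijective because $\sigma_\theta$ is. Your write-up is in fact slightly more precise, recording $\Delta'=(\sigma_\theta\times\sigma_\theta)\tau$ with the swap $\tau$, whereas the paper states $\Delta=\sigma_\theta\times\sigma_\theta$ (omitting $\tau$, though this does not affect bijectivity).
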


\begin{proof}
Let $\alpha(X)=\{\theta\}$. Then $\theta$ is a right zero of the
left quasigroup $X$. Let $T(X,\alpha)=(X,\cdot^{\prime},\alpha)$.
Thus
\[\Delta(x,y)=(x\cdot^{\prime}y,y\cdot^{\prime}x)=(\alpha(x)y, \alpha(y)x)=(\theta y,\theta x )\]
for all $x,y \in X$. It follows $\Delta=\sigma_{\theta}\times \sigma_{\theta}$. Since $\sigma_{\theta}$  is bijective, so is $\Delta$. Hence $(X,\cdot^{\prime},\alpha)$ is non-degenerate.
\end{proof}
\begin{rem}
A Hom-cycle set is not necessarily the twist of an im-cycle set, and
vice versa. For example, let $(X,\alpha)$ be as in Example~\ref{ex4order}.
It is degenerate and both a Hom-cycle set and an im-cycle set, but
it is isomorphic to neither the twist of a Hom-cycle set nor the twist
of an im-cycle set by Lemma~\ref{lemsingnon}.
\end{rem}

The following example shows that twist of a Hom-cycle set is not necessarily a Hom-cycle set.
\begin{ex}\label{examMatrix}
 Let $F$ be a field of characteristic $ \neq 2$, $X$ the vector space ${F}^{3} $ and $\varphi,\psi, \alpha$   the linear endomorphisms of $X$ defined under the natural basis by the following matrices, respectively,    \[A=\begin{pmatrix}
      0 & 1 & 0 \\
      0 & 0 & 1 \\
      0 & 0 & 0
    \end{pmatrix},~B=\begin{pmatrix}
      1 & 0 & 0 \\
0 & 1 & 1 \\
0 & 0 & 1
    \end{pmatrix}, ~C=\begin{pmatrix}
      -1 & 0 & 1 \\
0 & -1 & 0 \\
0 & 0 & -1
    \end{pmatrix}.\]
  Then it is easy to check that $\psi$  is bijective and \[\
 \varphi\alpha=\alpha\varphi , ~\psi\alpha=\alpha\psi,~
 \varphi\alpha^2=\varphi,~\varphi^2=\varphi\psi -\psi\varphi,~\varphi^2\alpha\ne\varphi^2.   \]    Define $x\cdot y=\varphi(x)+\psi(y)$ for all $x,y\in X$.  Then   $(X,\cdot)$ is a cycle set by \cite[Section 4.1]{Bonatto21} and $\alpha$ is an endomorphism of $(X,\cdot)$ satisfying $\alpha^2(x)y=xy$ for all $x,y\in X$.

 The twist $T (X,\cdot,\alpha)$ of $(X,\cdot,\alpha )$ is a Hom-cycle set By Corollary \ref{CorSCtoHSC}.  Clearly, the twist of $T (X,\cdot,\alpha)$ equals  $(X,\cdot,\alpha )$, which is not a Hom-cycle set since (4) in Example \ref{exlinearhcs} does not hold.
\end{ex}

\begin{cor}\label{corq'}
If $(X,\alpha)$ is a non-degenerate Hom-cycle set,
then the map $q^{\prime}:X\to X,~~x\mapsto\alpha(x)x$ is bijective.
\end{cor}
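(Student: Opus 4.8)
The plan is to connect the map $q':X\to X$ given by $q'(x)=\alpha(x)x=x\cdot'x$ to the known results about non-degenerate groupoids. First I would observe that $q'$ is precisely the square map of the twist $T(X,\alpha)=(X,\cdot',\alpha)$, since in the twisted operation $x\cdot'x=\alpha(x)x=q'(x)$. So the statement is really a claim about the square map of the twisted left quasigroup.

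The key structural input is that the twist of a non-degenerate Hom-cycle set is again non-degenerate, which is exactly Theorem~\ref{thndtw}. Thus, assuming $(X,\alpha)$ is a non-degenerate Hom-cycle set, its twist $(X,\cdot',\alpha)$ is a non-degenerate left Hom-quasigroup, meaning the underlying groupoid $(X,\cdot')$ is $\Delta$-bijective (using the convention stated in the paper that ``non-degenerate'' for a groupoid-with-structure means $\Delta$-bijective). With that in hand, Lemma~\ref{lem2div} applies directly: a $\Delta$-bijective groupoid has invertible square map. Therefore the square map $q:X\to X,\ x\mapsto x\cdot'x$ of the groupoid $(X,\cdot')$ is bijective, and since $q=q'$, the corollary follows.

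In outline, the steps in order are: (1) rewrite $q'(x)=\alpha(x)x$ as the square map $x\mapsto x\cdot'x$ of the twist $T(X,\alpha)$; (2) invoke Theorem~\ref{thndtw} to conclude that the twist $(X,\cdot',\alpha)$ is non-degenerate, hence $(X,\cdot')$ is $\Delta$-bijective; (3) apply Lemma~\ref{lem2div} to the $\Delta$-bijective groupoid $(X,\cdot')$ to deduce that its square map is invertible; (4) identify this square map with $q'$ and conclude.

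There is essentially no hard calculation here: the entire content has been pre-packaged into Theorem~\ref{thndtw} and Lemma~\ref{lem2div}. The only point requiring any care — and the step I would flag as the place to be precise — is the bookkeeping in step (1), namely confirming that the twist operation $\cdot'$ is defined so that $x\cdot'x$ equals $\alpha(x)x$ (this follows immediately from the definition $x\cdot'y=\alpha(x)y$ in the opening of Section~5), and ensuring the notion of ``non-degenerate'' used for the twist matches the $\Delta$-bijectivity hypothesis needed to apply Lemma~\ref{lem2div}. Once that identification is made, the proof is a one-line chain of citations.
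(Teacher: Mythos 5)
Your proposal is correct and follows exactly the paper's own argument: identify $q'$ with the square map of the twist $(X,\cdot',\alpha)$, invoke Theorem~\ref{thndtw} to get non-degeneracy of the twist, and apply Lemma~\ref{lem2div}. No differences worth noting.
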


\begin{proof}
By Theorem~\ref{thndtw} the twist $(X,\cdot',\alpha)$ is non-degenerate. Since $q^{\prime}$ is the square map of $(X,\cdot')$, $q^{\prime}$ is  bijective by Lemma~\ref{lem2div}.
\end{proof}

\begin{rem}
The converses of Theorem~\ref{thndtw} and Corollary~\ref{corq'}
do not hold. Example~\ref{ex4order} provides  a counterexample to the
both cases.
\end{rem}

We now apply Theorem~\ref{ThmStableCS}, Corollary~\ref{CorSCtoHSC} and Theorem~\ref{Thmhcycle} to  solutions to HYBE.

Let $(X,r,\alpha)$ be a left non-degenerate involutive Hom-quadratic
set. We call  $STG(X,r,\alpha)$ the twist of $(X,r,\alpha)$.  The twist of $(X,r,\alpha)$ is also a left non-degenerate involutive Hom-quadratic
set.
\begin{thm}
Let $(X,r,\alpha)$ be a left non-degenerate involutive Hom-quadratic
set with the twist $(X,r',\alpha)$. Then
\begin{equation}\label{eqrprime}
r^{\prime}(x,y)=(\lambda_{\alpha(x)}(y),
\lambda_{\lambda_{\alpha^{2}(x)}\alpha(y)}^{-1}(x))
\end{equation}
 for all $x,y\in X$. If $(X,r,\alpha)$ is additionally a solution to HYBE, then
 \begin{equation}\label{eqrprime2}
r^{\prime}(x,y)=(\lambda_{\alpha(x)}(y),
\rho_{ \alpha(y)} (x))
\end{equation}for all $x,y\in X$.
\end{thm}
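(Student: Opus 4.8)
The plan is to unwind the composite functor $STG$ one step at a time and read off both coordinates of $r^{\prime}$ directly from the definitions, with the HYBE hypothesis entering only at the very end. First I would apply $G$ to obtain the left Hom-quasigroup $(X,\cdot,\alpha)$ with $x\cdot y=\lambda_x^{-1}(y)$, then the twist $T$ to obtain $(X,\cdot^{\prime},\alpha)$ with $x\cdot^{\prime} y=\alpha(x)\cdot y=\lambda_{\alpha(x)}^{-1}(y)$, and finally $S$ to recover $(X,r^{\prime},\alpha)$. Writing $\sigma^{\prime}_x$ for the left multiplication of $(X,\cdot^{\prime})$, we have $\sigma^{\prime}_x=\lambda_{\alpha(x)}^{-1}$, so the rule $\lambda^{\prime}_x=(\sigma^{\prime}_x)^{-1}$ from the definition of $S$ gives at once $\lambda^{\prime}_x(y)=\lambda_{\alpha(x)}(y)$, which is the first coordinate of $r^{\prime}$ in both \eqref{eqrprime} and \eqref{eqrprime2}.

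For the second coordinate I would use the formula $\rho^{\prime}_y(x)=(\sigma^{\prime}_x)^{-1}(y)\cdot^{\prime} x=\lambda_{\alpha(x)}(y)\cdot^{\prime} x$, again from the definition of $S$. Unwinding the twisted product via $a\cdot^{\prime} x=\alpha(a)\cdot x=\lambda_{\alpha(a)}^{-1}(x)$ yields $\rho^{\prime}_y(x)=\lambda_{\alpha\lambda_{\alpha(x)}(y)}^{-1}(x)$. The key step is then to simplify the subscript using the Hom-compatibility $\alpha\lambda_{\alpha(x)}=\lambda_{\alpha^2(x)}\alpha$ supplied by Corollary~\ref{corHQS}, which turns $\alpha\lambda_{\alpha(x)}(y)$ into $\lambda_{\alpha^2(x)}\alpha(y)$ and thereby establishes \eqref{eqrprime}. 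I would emphasize that this part uses only that $(X,r,\alpha)$ is a left non-degenerate involutive Hom-quadratic set, and not that it solves HYBE.

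To pass from \eqref{eqrprime} to \eqref{eqrprime2} under the additional HYBE hypothesis, I would invoke Corollary~\ref{corxa2}\eqref{eqcor1}, namely $\lambda_x\alpha=\lambda_{\alpha^2(x)}\alpha$, which is precisely the place where the HYBE assumption is consumed. Evaluating this identity at $y$ gives $\lambda_{\alpha^2(x)}\alpha(y)=\lambda_x\alpha(y)$, so the subscript appearing in \eqref{eqrprime} may be rewritten as $\lambda_x\alpha(y)$; comparing the resulting expression with the formula $\rho_{\alpha(y)}(x)=\lambda_{\lambda_x(\alpha(y))}^{-1}(x)$ from Lemma~\ref{sec2:mark1}\eqref{lndequiv} then identifies $\rho^{\prime}_y(x)$ with $\rho_{\alpha(y)}(x)$, proving \eqref{eqrprime2}. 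I expect the only real care needed to be bookkeeping: tracking which operation ($\cdot$ versus $\cdot^{\prime}$) and which instance of the compatibility relations is in play at each stage, and recognizing that Corollary~\ref{corxa2} is exactly the ingredient that encodes the HYBE hypothesis required for the second formula.
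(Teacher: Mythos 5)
Your proposal is correct and follows essentially the same route as the paper: unwind $STG$ to get $\lambda'_x=\lambda_{\alpha(x)}$, compute $\rho'_y(x)=\lambda^{-1}_{\alpha\lambda_{\alpha(x)}(y)}(x)$ and simplify the subscript with Corollary~\ref{corHQS}, then use Corollary~\ref{corxa2}\eqref{eqcor1} together with~\eqref{eqInvolutive} for the HYBE case. The only cosmetic difference is that you read $\rho'_y$ off the defining formula of the functor $S$ while the paper invokes the involutivity relation $\rho'_y(x)=\lambda'^{-1}_{\lambda'_x(y)}(x)$; these are the same identity.
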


\begin{proof} Since $ STG(X,r,\alpha)=(X,r',\alpha)$, by Theorem~\ref{LemlnditP}   we have  \[  TG(X,r,\alpha)=G(X,r',\alpha).\]
 Thus $G(X,r',\alpha)$ is the twist of $   G(X,r,\alpha)$. Let $G(X,r,\alpha)=(X,\cdot,\alpha)$.  Then  $G(X,r',\alpha)=(X,\cdot',\alpha)$,   where $x\cdot'y=\alpha(x)y$ for all $x,y\in X$. Thus $\lambda'_x =\lambda_{\alpha(x)}$, and so
\[\rho'_y(x)=\lambda'^{-1}_{\lambda'_{x}(y)}(x)
=\lambda ^{-1}_{\alpha\lambda _{\alpha(x)}(y)}(x)
=\lambda ^{-1}_{\lambda _{\alpha^2(x)}\alpha(y)}(x). \]
Therefore~\eqref{eqrprime} holds.

Furthermore, if $(X,r,\alpha)$ is  a solution to HYBE, then~\eqref{eqrprime2} follows from~\eqref{eqrprime}, Corollary~\ref{corxa2}\eqref{eqcor1} and~\eqref{eqInvolutive}.
\end{proof}

\begin{rem}
   By Example~\ref{examMatrix} and Theorem~\ref{thlndihcs},  the twist of a  left non-degenerate involutive solution to HYBE is not necessarily a solution to HYBE.
\end{rem}

\begin{lem}\label{ThmLndHqs}
Let $(X,r,\alpha)$ be a left non-degenerate involutive
Hom-quadratic set and $i,j$ be nonnegative integers such that $0\leq j-i\leq2$.
Then the following are equivalent.
\begin{enumerate}
  \item $r(\alpha^{i+2}\times\alpha^{j})
      =(1\times\alpha^{2})r(\alpha^{i}\times\alpha^{j})$;
  \item $\lambda_{\alpha^{i+2}(x)}\alpha^{j}=\lambda_{\alpha^{i}(x)}\alpha^{j}$ for all $x\in X $.
\end{enumerate}
\end{lem}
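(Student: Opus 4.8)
The plan is to evaluate the map identity (1) on an arbitrary pair \((x,y)\) and split it into its two coordinates. Writing \(r(a,b)=(\lambda_a(b),\rho_b(a))\), the left-hand side of (1) sends \((x,y)\) to \(\bigl(\lambda_{\alpha^{i+2}(x)}(\alpha^j(y)),\,\rho_{\alpha^j(y)}(\alpha^{i+2}(x))\bigr)\) and the right-hand side to \(\bigl(\lambda_{\alpha^i(x)}(\alpha^j(y)),\,\alpha^2\rho_{\alpha^j(y)}(\alpha^i(x))\bigr)\). Hence (1) is equivalent to the conjunction of the first-coordinate identity \(\lambda_{\alpha^{i+2}(x)}\alpha^j=\lambda_{\alpha^i(x)}\alpha^j\), which is exactly (2), and the second-coordinate identity \(\rho_{\alpha^j(y)}(\alpha^{i+2}(x))=\alpha^2\rho_{\alpha^j(y)}(\alpha^i(x))\). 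This makes \((1)\Rightarrow(2)\) immediate (and, incidentally, free of any hypothesis on \(j-i\)); the whole content of the lemma is therefore to deduce the second-coordinate identity from (2).

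For that direction I would first clean up the target. Applying the Hom relation \(\alpha\rho_w=\rho_{\alpha(w)}\alpha\) of Corollary~\ref{corHQS} twice rewrites the right-hand side as \(\rho_{\alpha^{j+2}(y)}(\alpha^{i+2}(x))\), so the goal becomes \(\rho_{\alpha^j(y)}(\alpha^{i+2}(x))=\rho_{\alpha^{j+2}(y)}(\alpha^{i+2}(x))\). Put \(u=\alpha^{i+2}(x)\), \(s=\lambda_u(\alpha^j(y))\), and let \(b_1,b_2\) denote the two sides. By \eqref{sec2:eq1} they are characterised by \(\lambda_s(b_1)=u\) and \(\lambda_{\lambda_u(\alpha^{j+2}(y))}(b_2)=u\). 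A short computation with the Hom relations and with (2) (used with base point \(\alpha^2(x)\), evaluated at \(\alpha^2(y)\)) identifies the second subscript as \(\lambda_u(\alpha^{j+2}(y))=\alpha^2(s)\), so \(\lambda_{\alpha^2(s)}(b_2)=u\). Since every \(\lambda\) is bijective, it now suffices to prove \(\lambda_{\alpha^2(s)}(b_1)=u\), i.e. that \(\lambda_{\alpha^2(s)}\) and \(\lambda_s\) agree on \(b_1\).

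The key step, which I expect to be the main obstacle, is an image count that uses both bounds on \(j-i\). From \(j\le i+2\) we get \(\alpha^{i+2}(X)\subseteq\alpha^j(X)\), so the pair \((u,\alpha^j(y))\) lies in \(\alpha^j(X)\times\alpha^j(X)\); because \(r\) commutes with \(\alpha\times\alpha\) it preserves \(\alpha^j(X)\times\alpha^j(X)\), so both \(s=\lambda_u(\alpha^j(y))\) and \(b_1=\rho_{\alpha^j(y)}(u)\) lie in \(\alpha^j(X)\). From \(i\le j\) we get \(\alpha^j(X)\subseteq\alpha^i(X)\), hence \(s=\alpha^i(d)\) for some \(d\in X\) and \(\alpha^2(s)=\alpha^{i+2}(d)\). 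Condition (2) with base point \(d\) then reads \(\lambda_{\alpha^2(s)}\alpha^j=\lambda_s\alpha^j\); evaluating at the element \(c\) with \(b_1=\alpha^j(c)\) gives \(\lambda_{\alpha^2(s)}(b_1)=\lambda_s(b_1)=u\), where the last equality is \eqref{sec2:eq1}. Injectivity of \(\lambda_{\alpha^2(s)}\) forces \(b_1=b_2\), which is the second-coordinate identity and finishes the proof. The only delicate point is getting \(s\) and \(b_1\) into the correct \(\alpha^k(X)\) so that (2) can be applied verbatim, and this is exactly where the constraint \(0\le j-i\le 2\) is consumed.
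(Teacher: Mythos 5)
Your proof is correct. It follows the same skeleton as the paper's: split (1) into coordinates, note that the first coordinate is literally (2), and reduce the converse to the second-coordinate identity $\rho_{\alpha^{j}(y)}\alpha^{i+2}(x)=\alpha^{2}\rho_{\alpha^{j}(y)}\alpha^{i}(x)$, which is then deduced from (2) using $\rho_{b}(a)=\lambda_{\lambda_{a}(b)}^{-1}(a)$ and the Hom commutation of Corollary~\ref{corHQS}. The difference lies in how the hypothesis $0\le j-i\le 2$ is consumed. The paper stays at the operator level: it rewrites $\lambda_{\alpha^{i+2}(x)}\alpha^{j+2}$ as $\alpha^{i+2}\lambda_{x}\alpha^{j-i}$ (needing $j\ge i$ for the exponent) and applies the inverted form of (2) to the argument $\alpha^{i+2}(x)=\alpha^{j}(\alpha^{i+2-j}(x))$ (needing $j\le i+2$). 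You instead name the elements $b_{1},b_{2}$ characterized via \eqref{sec2:eq1} by $\lambda_{s}(b_{1})=u$ and $\lambda_{\alpha^{2}(s)}(b_{2})=u$, and consume the inequalities through image membership: $j\le i+2$ places $(u,\alpha^{j}(y))$ in $\alpha^{j}(X)\times\alpha^{j}(X)$, which is $r$-invariant since $r$ commutes with $\alpha\times\alpha$, so $s,b_{1}\in\alpha^{j}(X)$; and $i\le j$ gives $s\in\alpha^{i}(X)$, so (2) applies verbatim at a preimage of $s$. The invariance of $\alpha^{j}(X)\times\alpha^{j}(X)$ under $r$ is a device the paper does not use and makes the role of each bound on $j-i$ more transparent, though the two arguments are of comparable length and rest on the same two identities; all the individual steps you outline (including $\lambda_{u}(\alpha^{j+2}(y))=\alpha^{2}(s)$ via (2) at base point $\alpha^{2}(x)$) check out.
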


\begin{proof}
We first note that for all $x,y\in X$,
\begin{gather*}
r(\alpha^{i+2}\times\alpha^{j})(x,y)=(\lambda_{\alpha^{i+2}(x)}\alpha^{j}(y),\rho_{\alpha^{j}(y)}\alpha^{i+2}(x)),\\
(1\times\alpha^{2})r(\alpha^{i}\times\alpha^{j})(x,y)=(\lambda_{\alpha^{i}(x)}\alpha^{j}(y),\alpha^{2}\rho_{\alpha^{j}(y)}\alpha^{i}(x)).
\end{gather*}
It suffices to prove that  (2) implies $\rho_{\alpha^{j}(y)}\alpha^{i+2}(x)=\alpha^{2}\rho_{\alpha^{j}(y)}\alpha^{i}(x)$
for all $x,y\in X$. Since $0\leq j-i\leq2$, we have $\lambda_{\alpha^{i}(x)}\alpha^{j}=\alpha^{j}\lambda_{\alpha^{i+2-j}(x)}$,
and so $\alpha^{j}\lambda_{\alpha^{i+2-j}(x)}^{-1}=\lambda_{\alpha^{i}(x)}^{-1}\alpha^{j}$.
Thus $\lambda_{\alpha^{i+2}(x)}^{-1}\alpha^{j}=\lambda_{\alpha^{i}(x)}^{-1}\alpha^{j}$.
Consequently,
\begin{multline*}
\alpha^{2}\rho_{\alpha^{j}(y)}\alpha^{i}(x)=\lambda_{\lambda_{\alpha^{i+2}(x)}\alpha^{j+2}(y)}^{-1}\alpha^{i+2}(x)=\lambda_{\alpha^{i+2}(\lambda_{x}\alpha^{j-i}(y))}^{-1}\alpha^{i+2}(x)\\
=\lambda_{\alpha^{i}(\lambda_{x}\alpha^{j-i}(y))}^{-1}\alpha^{i+2}(x)=\lambda_{\lambda_{\alpha^{i}(x)}\alpha^{j}(y)}^{-1}\alpha^{i+2}(x)\\
=\lambda_{\lambda_{\alpha^{i+2}(x)}\alpha^{j}(y)}^{-1}\alpha^{i+2}(x)=\rho_{\alpha^{j}(y)}\alpha^{i+2}(x),
\end{multline*}
as desired.
\end{proof}
\begin{cor}
Let $(X,r,\alpha)$ be a  left non-degenerate involutive solution to HYBE.
 Then $r(\alpha^{2}\times\alpha)=(\id\times\alpha^{2})r(\id\times\alpha)$.
 \end{cor}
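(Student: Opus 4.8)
The plan is to recognize this corollary as the single instance $i=0$, $j=1$ of the equivalence already established in Lemma~\ref{ThmLndHqs}, so that the only genuine input is Corollary~\ref{corxa2}.

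First I would observe that a left non-degenerate involutive solution to HYBE is, in particular, a left non-degenerate involutive Hom-quadratic set, so Lemma~\ref{ThmLndHqs} applies to $(X,r,\alpha)$. Taking $i=0$ and $j=1$ gives $0\le j-i=1\le 2$, so these values are admissible, and condition (1) of that lemma reads
\[
r(\alpha^{2}\times\alpha)=(\id\times\alpha^{2})r(\id\times\alpha),
\]
which is exactly the identity to be proved. Hence, by the stated equivalence, it suffices to verify condition (2) for these values, namely $\lambda_{\alpha^{2}(x)}\alpha=\lambda_{x}\alpha$ for all $x\in X$.

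Second, I would invoke Corollary~\ref{corxa2}\eqref{eqcor1}, which asserts precisely $\lambda_{x}\alpha=\lambda_{\alpha^{2}(x)}\alpha$ for every $x\in X$. This is condition (2) above, so Lemma~\ref{ThmLndHqs} yields condition (1), completing the proof. The main (and essentially only) obstacle is bookkeeping: matching the superscripts in the general lemma to the specific shape of the target equation and checking the admissibility constraint $0\le j-i\le 2$; once $i=0$, $j=1$ is identified there is no further computation, since Corollary~\ref{corxa2}\eqref{eqcor1} supplies the equivalent left-multiplication identity verbatim.
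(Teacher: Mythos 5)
Your proposal is correct and follows exactly the paper's own (one-line) proof: the paper also derives the corollary by combining Corollary~\ref{corxa2} with Lemma~\ref{ThmLndHqs}, and your instantiation $i=0$, $j=1$ (which satisfies $0\le j-i\le 2$) together with the identity $\lambda_{\alpha^{2}(x)}\alpha=\lambda_{x}\alpha$ is precisely the intended argument. You have merely spelled out the bookkeeping that the paper leaves implicit.
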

 \begin{proof}
    It follows from Corollary~\ref{corxa2} and Lemma~\ref{ThmLndHqs}.
 \end{proof}
\begin{thm}
Let $(X,r,\alpha)$ be left non-degenerate involutive Hom-quadratic set with the twist
$(X,r^{\prime},\alpha)$.
\begin{enumerate}
\item If $(X,r,\alpha)$ is a  solution to
HYBE, then $(\alpha(X),r^{\prime})$ is a solution to YBE and $r'(\alpha^{2}\times\id)=(\id\times\alpha^{2})r'$
on $\alpha(X)$.
\item The twist $(X,r^{\prime},\alpha)$ is a solution to HYBE
if  and only if  $(\alpha(X),r)$ is a solution to YBE
and  $r(\alpha^{2}\times\id)=(\id\times\alpha^{2})r$ on $\alpha(X)$.
\item If $(X,r)$ is a  solution to YBE and $r(\alpha^{3}\times\alpha)=(\alpha\times\alpha^{3})r$, then
$(X,r^{\prime},\alpha)$ is a  solution to HYBE.
\end{enumerate}
\end{thm}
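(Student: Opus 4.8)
The plan is to transport every assertion through the category isomorphism $G$ of Theorem~\ref{thlndihcs}, which identifies left non-degenerate involutive solutions to HYBE with Hom-cycle sets, and then to read the three statements off the twist machinery for (Hom-/im-)cycle sets already in place (Theorems~\ref{Thmhcycle} and~\ref{ThmStableCS}, Corollary~\ref{CorSCtoHSC}). Write $(X,\cdot,\alpha)=G(X,r,\alpha)$, so that $x\cdot y=\lambda_{x}^{-1}(y)$, and recall from the preceding twist theorem that $G(X,r',\alpha)$ is exactly the twist $(X,\cdot',\alpha)$, with $\lambda'_{x}=\lambda_{\alpha(x)}$. The two dictionary entries I would establish first are: (i) $(\alpha(X),r)$ is a solution to YBE if and only if $\alpha(X)$ is a cycle set under $\cdot$ — this is just Theorem~\ref{TemLndisC} applied to the left non-degenerate involutive sub-quadratic-set $(\alpha(X),r|_{\alpha(X)})$, whose $r$-invariance comes from $r(\alpha\times\alpha)=(\alpha\times\alpha)r$; and (ii) the condition $r(\alpha^{2}\times\id)=(\id\times\alpha^{2})r$ on $\alpha(X)$ is equivalent to~\eqref{eqCommon1}. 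Since an im-cycle set is precisely a left Hom-quasigroup for which $\alpha(X)$ is a cycle set and~\eqref{eqCommon1} holds, these two entries reduce all three parts to the structural theorems.

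For (ii), I would precompose the restriction condition with $\alpha\times\alpha$, turning ``$r(\alpha^{2}\times\id)=(\id\times\alpha^{2})r$ on $\alpha(X)$'' into $r(\alpha^{3}\times\alpha)=(\id\times\alpha^{2})r(\alpha\times\alpha)$, which is the case $i=j=1$ of Lemma~\ref{ThmLndHqs} and is therefore equivalent to $\lambda_{\alpha^{3}(x)}\alpha=\lambda_{\alpha(x)}\alpha$ for all $x$; whereas~\eqref{eqCommon1}, read through $\cdot=\lambda^{-1}_{\bullet}$, says $\lambda_{\alpha^{3}(x)}^{-1}\alpha=\lambda_{\alpha(x)}^{-1}\alpha$. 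The main obstacle is exactly reconciling these two forms: Lemma~\ref{ThmLndHqs} produces the statement in terms of $\lambda$ (left division), while~\eqref{eqCommon1} is phrased in terms of $\lambda^{-1}$ (multiplication). I would resolve this with the observation that, for $u\in\alpha(X)$, the map $\lambda_{u}$ restricts to a bijection of $\alpha(X)$: from $\lambda_{\alpha(a)}\alpha=\alpha\lambda_{a}$ (Corollary~\ref{corHQS}) one gets $\lambda_{u}(\alpha(X))\subseteq\alpha(X)$, injectivity is inherited from $X$, and surjectivity onto $\alpha(X)$ follows from $\lambda_{\alpha(a)}\bigl(\alpha\lambda_{a}^{-1}(z)\bigr)=\alpha(z)$. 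Since $\alpha^{3}(x),\alpha(x)\in\alpha(X)$ and both conditions only feed arguments from $\alpha(X)$ into these maps, each asserts the equality of two bijections of $\alpha(X)$, and two bijections agree if and only if their inverses agree; hence $\lambda_{\alpha^{3}(x)}\alpha=\lambda_{\alpha(x)}\alpha$ on $\alpha(X)$ is equivalent to $\lambda_{\alpha^{3}(x)}^{-1}\alpha=\lambda_{\alpha(x)}^{-1}\alpha$, i.e.\ to~\eqref{eqCommon1}. This single equivalence is the heart of all three parts.

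With the dictionary in hand the three statements follow formally. For (1), a solution $(X,r,\alpha)$ to HYBE makes $(X,\cdot,\alpha)$ a Hom-cycle set, so by Theorem~\ref{Thmhcycle} its twist $(X,\cdot',\alpha)=G(X,r',\alpha)$ is an im-cycle set; decomposing the im-cycle set condition and applying (i) and (ii) now to $r'$ and $\lambda'$ yields both that $(\alpha(X),r')$ solves YBE and that $r'(\alpha^{2}\times\id)=(\id\times\alpha^{2})r'$ holds on $\alpha(X)$. For (2), $(X,r',\alpha)$ solves HYBE iff $G(X,r',\alpha)$, the twist of $(X,\cdot,\alpha)$, is a Hom-cycle set, iff by Theorem~\ref{ThmStableCS} $(X,\cdot,\alpha)$ is an im-cycle set, iff $\alpha(X)$ is a cycle set and~\eqref{eqCommon1} holds, which by (i) and (ii) is precisely ``$(\alpha(X),r)$ solves YBE and $r(\alpha^{2}\times\id)=(\id\times\alpha^{2})r$ on $\alpha(X)$''. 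For (3), a solution $(X,r)$ to YBE makes $(X,\cdot)$ a cycle set of which $\alpha$ is an endomorphism, and the first component of $r(\alpha^{3}\times\alpha)=(\alpha\times\alpha^{3})r$ gives $\lambda_{\alpha^{3}(x)}\alpha=\lambda_{\alpha(x)}\alpha$, hence~\eqref{eqCommon1} by the bijection observation; Corollary~\ref{CorSCtoHSC} then makes the twist a Hom-cycle set, so by Theorem~\ref{thlndihcs} $(X,r',\alpha)$ solves HYBE. I expect parts (1) and (2) to be routine once the dictionary is built, and only the bookkeeping for the second coordinate of the identities in (3) to require the (already available) Lemma~\ref{ThmLndHqs}; notably, only the first coordinate of the hypothesis in (3) is actually needed.
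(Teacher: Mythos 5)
Your proof is correct and follows essentially the same route as the paper's: transport everything through the $G$/$S$ isomorphism of Theorem~\ref{thlndihcs}, reduce the three parts to Theorems~\ref{Thmhcycle} and~\ref{ThmStableCS} (and Corollary~\ref{CorSCtoHSC}) together with Theorem~\ref{TemLndisC}, and use Lemma~\ref{ThmLndHqs} to convert the $\alpha^{2}$-compatibility conditions. The only difference is that you make explicit the passage between $\lambda$ and $\lambda^{-1}$ on $\alpha(X)$ (via the observation that $\lambda_{u}$ restricts to a bijection of $\alpha(X)$ for $u\in\alpha(X)$), a reconciliation the paper leaves implicit when it invokes Lemma~\ref{ThmLndHqs}.
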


\begin{proof} Let $S(X,r,\alpha)=(X,\cdot,\alpha)$. Then  $S(X,r^{\prime},\alpha)=(X,\cdot',\alpha)$,   the twist of $(X,\cdot,\alpha)$.

(1)   Theorem~\ref{thlndihcs} implies that $(X,\cdot,\alpha)$ is a Hom-cycle set. By Theorem~\ref{Thmhcycle}, $(\alpha(X),\cdot') $ is a cycle set and~\eqref{eqCommon1} holds with respect to the operation $\cdot'$. It follows that $\alpha^{2}(x)\cdot' y=x\cdot' y$ for all $x,y\in \alpha(X)$, which implies  $r'(\alpha^{2}\times\id)=(\id\times\alpha^{2})r'$ on $\alpha(X)$ by Lemma~\ref{ThmLndHqs}. By Theorem~\ref{TemLndisC}, $(\alpha(X),r') $ is a solution to YBE.

(2) By Theorem~\ref{thlndihcs},  $(X,r^{\prime},\alpha)$ is a  left non-degenerate involutive solution to HYBE  if and only if $(X,\cdot',\alpha)$ is a Hom-cycle set. Equivalently,  by Theorem~\ref{ThmStableCS} $(\alpha(X),\cdot)$ is a left cycle set satisfying~\eqref{eqCommon1}. This is equivalent to that $(\alpha(X),r)$ is a solution to YBE satisfying $r(\alpha^{2}\times\id)=(\id\times\alpha^{2})r$ on $\alpha(X)$  by Theorem~\ref{TemLndisC} and Lemma~\ref{ThmLndHqs}.

(3)  follows from the sufficiency in  (2).
 \end{proof}

\section*{Acknowledgements}

This work is supported by NSF of China (No.12171194, No.11971289).
 \bibliographystyle{abbrvnat}
\bibliography{20221201}

\end{document}